\def\({\left(}
\def\]{\right]}
\def\[{\left[}
\def\){\right)}
\newtheorem{thm}{Theorem}[section]
\newtheorem{prin}[thm]{Principle}
\newtheorem{asm}[thm]{Assumption}
\newtheorem{prop}[thm]{Proposition}
\newtheorem{lem}[thm]{Lemma}
\newtheorem{rem}[thm]{Remark}
\newtheorem{conj}[thm]{Conjecture}
\newtheorem{defn}[thm]{Definition}
\newtheorem{exm}[thm]{Example}
\def\P{{\mathbb P}}
\def\E{{\mathbb E}}
\def\Cov{{\mathbb Cov}}
\newcommand{\disp}{\displaystyle}
\newcommand{\bea}{$$\begin{array}{ll}}
\newcommand{\eea}{\end{array}$$}
\newcommand{\bed}{\begin{displaymath}}
\newcommand{\eed}{\end{displaymath}}
\newcommand{\ad}{&\!\!\!\disp}
\newcommand{\aad}{&\disp}
\newcommand{\barray}{\begin{array}{ll}}
\newcommand{\earray}{\end{array}}
\newcommand{\beq}[1]{\begin{equation} \label{#1}}
\newcommand{\eeq}{\end{equation}}
\newcommand{\Sb}{{\bar S}}
\newcommand{\bedd}{\bed\begin{array}{l}}
\newcommand{\eedd}{\end{array}\eed}
\newcommand{\al}{\alpha}
\newcommand{\sg}{\sigma}
\newcommand{\e}{\varepsilon}
\newcommand{\one}{{1}}
\newcommand{\wdt}{\widetilde}
\newcommand{\wdh}{\widehat}
\newcommand{\diag}{{\rm diag}}
\newcommand{\cd}{(\cdot)}
\newcommand{\rr}{\Bbb R}
\renewcommand{\L}{\mathcal {L}}
\def\one{{\hbox{1{\kern -0.35em}1}}}
\def\ka{\kappa}
\def\ph{\varphi}
\def\({\left(}
\def\]{\right]}
\def\[{\left[}
\def\){\right)}
\def\one{{\hbox{1{\kern -0.35em}1}}}
\makeatletter \@addtoreset{equation}{section}
\def\para#1{\vskip 0.4\baselineskip\noindent{\bf #1}}
\def\tr{\hbox{tr}}
\def\xh{\xi^h}
\def\ei{{\bf e_i}}
\def\ej{{\bf e_j}}
\def\argmax{\hbox{argmax}}
\begin{document}

\title{Harvesting of interacting stochastic populations }
\author[A. Hening]{Alexandru Hening }
\address{Department of Mathematics\\
Tufts University\\
Bromfield-Pearson Hall\\
503 Boston Avenue\\
Medford, MA 02155\\
United States
}
\email{alexandru.hening@tufts.edu}

\author[K. Q. Tran]{Ky Quan Tran}

\address{Department of Mathematics\\
College of Education\\
Hue University\\
Vietnam
}
\address{Department of Mathematics\\
Tufts University\\
Bromfield-Pearson Hall\\
503 Boston Avenue\\
Medford, MA 02155\\
United States
}
\email{ky.tran@tufts.edu}

	\author[T. T. Phan] {Tien Trong Phan}
\address{Department of Natural Sciences\\
Quang Binh University\\
Dong Hoi City\\
Vietnam}
\email{trongtien2000@gmail.com}

\author[G. Yin]{George Yin}
\address{Department of Mathematics\\
Wayne State University\\
Detroit, MI 48202\\
United States
}
\email{gyin@math.wayne.edu}
\keywords {Harvesting; stochastic environment; density-dependent price; controlled diffusion; species seeding}
\subjclass[2010]{92D25, 60J70, 60J60}
\maketitle

\begin{abstract}
We analyze the optimal harvesting problem for an ecosystem of species that experience environmental stochasticity. Our work generalizes the current literature significantly by taking into account non-linear interactions between species, state-dependent prices, and species injections. The key generalization is making it possible to not only harvest, but also `seed' individuals into the ecosystem. This is motivated by how fisheries and certain endangered species are controlled. The harvesting problem becomes finding the optimal harvesting-seeding strategy that maximizes the expected total income from the harvest minus the lost income from the species injections. Our analysis shows that new phenomena emerge due to the possibility of species injections.

It is well-known that multidimensional harvesting problems are very hard to tackle. We are able to make progress, by characterizing the value function as a viscosity solution of the associated Hamilton-Jacobi-Bellman (HJB) equations.  Moreover, we provide a verification theorem, which tells us that if a function has certain properties, then it will be the value function. This allows us to show heuristically, as was shown in Lungu and $\O$ksendal (Bernoulli '01), that it is almost surely never optimal to harvest or seed from more than one population at a time. 

It is usually impossible to find closed-form solutions for the optimal harvesting-seeding strategy. In order to by-pass this obstacle we approximate the continuous-time systems by Markov chains. We show that the optimal harvesting-seeding strategies of the Markov chain approximations converge to the correct optimal harvesting strategy. This is used to provide numerical approximations to the optimal harvesting-seeding strategies and is a first step towards a full understanding of the intricacies of how one should harvest and seed interacting species. In particular, we look at three examples: one species modeled by a Verhulst-Pearl diffusion, two competing species and a two-species predator-prey system.

\end{abstract}

\maketitle

\setlength{\baselineskip}{0.22in}

\tableofcontents

\section{Introduction}\label{sec:int}

Real populations never evolve in isolation. As a result, a key question in ecology is finding conditions that allow multiple species to coexist. There is a general theory for deterministic coexistence \cite{H81, H84, HJ89, HS98, ST11}. However, due to the intrinsic randomness of environmental fluctuations, deterministic models should be seen only as first order approximations of the real world. In order to get a better understanding of population dynamics we have to take into account environmental stochasticity. Recently there has been significant progress towards a general theory of stochastic coexistence \cite{SBA11, B18, BS18, HN18}.

Many species of animals live in restricted habitats and are at risk of being overharvested. Harvesting, hunting and other forms of overexploitation have already driven species to extinction. On the other hand, underharvesting can lead to the loss of valuable resources. One has to carefully balance both conservation and economic considerations in order to find the optimal harvesting strategies. It can take a population a significant amount of time to recover from large harvests. This, in combination with the random environmental fluctuations, can make it impossible for the population to survive and can lead to extinctions \cite{LES95, LES}.

In certain cases, added conservation efforts have to be made in order to save a species from extinction. Therefore, it makes sense to be able to repopulate a species by \textit{seeding} animals into the habitat. There is no reason to assume that the price of the harvesting or seeding is constant. If the harvested population is smaller the cost of harvesting is usually higher due to the fact that it is harder to find the individuals one wants to harvest. Similarly, the marginal cost of seeding will be lower, if one has a large population.
We present a model that incorporates all these factors and effects. We consider $d\geq 0$ species interacting nonlinearly in a stochastic environment where the species can be harvested as well as seeded into the system and the price of harvesting and seeding is density-dependent. The problem becomes finding the \textit{optimal harvesting-seeding strategy} that maximizes the expected total income from the harvest minus the lost income from the species seedings. Mathematically, the problem we consider
belongs to a
class of singular stochastic control problems.
 Singular stochastic control problems have been studied extensively in various settings. To mention just a few, we refer to \cite{Alvarez98, Alvarez00, Lungu, Zhu11, Hening, AH18} for single species ecosystems in random environments and \cite{Lungu01, Ky15, Ky17} for interacting populations. The reader can also find analogous results in the setting of corporate strategy \cite{Shepp96}, and optimal dividend strategies \cite{Taksar,Jin12, Scheer}. Numerical methods for optimal harvesting  have been developed in \cite{Jin12, Ky16} and capital injections have been introduced in \cite{Dickson04, Kulenko08, Scheer}.

 Considering optimal dividend problems
 in insurance and risk management
 \cite{Dickson04, Jin12, Kulenko08,  Scheer}, it was observed that higher profit can be obtained if investors are allowed  not only to remove but also to inject capital. In the harvesting setting, the idea of repopulating species (which we will call seeding) is natural and has been done for conservation efforts as well as for fisheries and agriculture. We propose a general model in which the \textit{control} consists of two components: harvesting and seeding. 
In contrast to
the existing literature, in our framework,
to maximize the expected total discounted reward, the controller can add individuals of various species to maintain the system
 at a certain level and  to avoid extinction. Moreover, we work with a system of interacting species. There are few theoretical results regarding the multi-species harvesting problem \cite{Lungu01, Ky17}. In a model with several species, one needs to decide which species to harvest at a given time. In addition, our model is complicated because we also allow seeding. At a given time, there are several possibilities. One can do nothing and let the population dynamics run on its own, or one can have any possible combination of seeding and harvesting of the $d$ species.

To find the optimal harvesting-seeding strategy (also called the \textit{optimal control}) and its associated total discounted reward (also called the \textit{value function}), the usual approach is to solve the associated the Hamilton-Jacobi-Bellman (HJB) partial differential equations. However, for the singular control problems we consider, the HJB equations become a system
 of nonlinear quasi-variational inequalities.
We use
the
viscosity solution approach for partial differential equations to study the value functions and associated control problems. It is usually impossible to find closed-form solutions to the HJB system. In order to side-step this difficulty and still gain valuable information, we develop numerical algorithms to approximate the value function and the optimal harvesting-seeding strategy.
 We do this by using the Markov
 chain approximation methodology developed by Kushner and Dupuis \cite{Kushner91}.

 The main contributions of our work are the following:

\begin{enumerate}
\item We formulate the harvesting-seeding problem for a system of interacting species living in a stochastic environment.
\item We establish the finiteness and the continuity of the value function and characterize
 the value function as a viscosity solution of an associated HJB
 system of quasi-variational inequalities.
\item We develop numerical approximation schemes based on the Markov chain approximation method.
\item We discover new phenomena by analyzing natural examples for one and two-species systems.
\end{enumerate}

 The rest of our work is organized as follows. In Section \ref{sec:for} we describe our model and the main results. Particular examples are explored using the newly developed numerical schemes in Section \ref{sec:fur}.  Finally, all the technical proofs appear in the appendices.

\section{Model and Results}\label{sec:for}
Assume we have a probability space $(\Omega, \mathcal{F}, \P)$ satisfying the usual conditions. We consider $d$ species interacting nonlinearly in a stochastic environment. We model the dynamics as follows. Let $\xi_i(t)$ be the
density of the $i$th species at time $t\geq 0$, and
denote by $\xi(t)=(\xi_1(t), \dots, \xi_d(t))'\in \rr^d$ (where $z'$ denotes
the transpose of $z$) the column vector recording all the species densities.

One way of adding environmental stochasticity to a deterministic system is based on the assumption that the environment mainly affects the growth/death rates of the populations. This way, the growth/death rates in an ODE (ordinary differential equation) model are replaced by their average values to which one adds a white noise fluctuation term; see \cite{T77, B02, G88, ERSS13, SBA11, G84} for more details.

Under this assumption the dynamics becomes
\beq{e.1} d \xi(t)=b(\xi(t))
dt+\sigma(\xi(t)) d w(t).\eeq
where $w\cd=\(w_1\cd, ..., w_d \cd\)'$ is a
$d$-dimensional standard Brownian motion and $b,\sigma:[0,\infty)^d\to[0,\infty)^d$ are smooth enough functions.  Let $S=(0, \infty)^d$ and $\Sb =[0, \infty)^d$.  We assume that $b(0)=\sg(0)=0$ so that $0$ is an equilibrium point of \eqref{e.1}. This makes sense because if our populations go extinct, they should not be able to get resurrected without external intervention (like a repopulation/seeding event). If $\xi_i(t_0)=0$ for some $t_0\ge 0$, then $\xi_i(t)=0$ for any $t\ge t_0$. Thus, $\xi(t)\in \Sb$ for any $t\ge 0$.

For $x, y\in \rr^d$, with $x=(x_1, \dots, x_d)'$ and $y=(y_1, \dots, y_d)'$, we write $x\le y$ or $y\ge x$ if $x_j\le y_j$ for each $j=1, \dots, d$, while $x < y$ if $x_j < y_j$ for each $j=1, \dots, d$..
We also define the scalar product $x\cdot y=\sum_{j=1}^d x_j y_j$. For a real number $a$, we denote $a^+=\max\{a, 0\}$ and $a^-=\max\{-a, 0\}$. Thus, $a=a^+-a^-$ and $|a|=a^++a^-$. For $x=(x_1, \dots, x_d)\in \rr^d$, $x^+=\big( x_1^+, \dots, x_d^+\big)'$ and $x^-=\big( x_1^-, \dots, x_d^-\big)'$. Let $\ei\in \rr^d$ denote the unit vector in the $i$th direction for $i=1, \dots, d$.

 To proceed, we introduce the generator of the process $\xi(t)$.
For a twice continuously differentiable function $\Phi(\cdot): \rr^d\mapsto \rr$, we define
\bea
\L \Phi(x)\ad=b(x)\nabla \Phi(x)+\dfrac{1}{2}\tr\big(\sigma(x)\sigma'(x)\nabla^2 \Phi(x)\big),\eea
where $\nabla \Phi(\cdot)$ and $\nabla^2 \Phi(\cdot)$ denote the gradient and Hessian matrix of $\Phi(\cdot)$, respectively.

Next, we have to add harvesting and seeding to \eqref{e.1}.  Let $Y_i(t)$ denote the amount of species $i$ that has been harvested up to time $t$ and set
$Y(t)=(Y_1(t), \dots, Y_d(t))'\in \rr^d$.
Let $Z_i(t)$ denote the amount of species $i$ seeded into the system up to time $t$ and set
$Z(t)=(Z_1(t), \dots, Z_d(t))'\in \rr^d$.
The dynamics of the $d$ species that takes into account harvesting and seeding is given by
\beq{e.2}X(t)=x+\int\limits_0^t b(X(s)) ds +  \int\limits_0^t  \sigma(X(s)) dw(s) - Y(t) +Z(t),\eeq
where  $X(t)=(X_1(t), \dots, X_d(t))'\in \rr^d$ are the species densities at time $t\geq 0$. We also assume the initial species densities are
\beq{e.3}X(0-)=x\in \Sb .\eeq
\para{Notation.} For each time $t$,
$X(t-)$ represents the state
before harvesting starts at time $t$, while $X(t)$ is the state immediately after.
Hence $X(0)$ may not be equal to $X(0-)$ due to an instantaneous harvest $Y(0)$ or an instantaneous
seeding $Z(0)$
at time $0$. Throughout the paper we use the convention that $Y(0-)=Z(0-)=0$.
The jump sizes of $Y(t)$ and $Z(t)$ are denoted by $\Delta Y(t):=Y(t) - Y(t-)$ and  $\Delta Z(t):=Z(t) - Z(t-)$, respectively. We use $Y^c(t):=
Y(t)  - \sum\limits_{0\le s\le t} \Delta Y(s)$ and
 $Z^c(t):=
Z(t)  - \sum\limits_{0\le s\le t} \Delta Z(s)$ to denote the continuous part of $Y$ and $Z$. Also note that $\Delta X(t):=
X(t) -  X(t-)=  \Delta Z(t)-\Delta Y(t)$ for any $t \ge 0$.

Let $f_i: \Sb\mapsto (0, \infty)$ represent the instantaneous marginal yields accrued
from exerting the harvesting strategy $Y_i$ for the species $i$, also known as the price of species $i$. Let $g_i: \Sb \mapsto (0, \infty)$ represent the total cost we need to pay for the seeding strategy $Z_i$ on species $i$. We will set $f=(f_1, \dots, f_d)'$ and $g=(g_1, \dots, g_d)'$.
For a harvesting-seeding strategy $(Y, Z)$ we define the \textit{performance function} as
\beq{e.4}
J(x, Y, Z):= \E_{x}\bigg[\int\limits_0^{\infty} e^{-\delta s} f\( X(s-) \)\cdot dY(s)-\int\limits_0^{\infty} e^{-\delta s} g\( X(s-)\) \cdot dZ(s)\bigg],
\eeq
where 
 $\delta> 0$ is the discounting factor, $\E_{x}$ denotes the expectation with respect
to the probability law when the initial densities are $X(0-)=x$, and $f(X(s-))\cdot dY(s):=\sum_{i=1}^n f_i(X(s-)) dY_i(s)$.

\para{Control strategy.}
Let $\mathcal{A}_{x}$ denote the collection of all admissible controls with initial condition $x$. A harvesting-seeding strategy $(Y, Z)$ will be in $\mathcal{A}_x$ if it satisfies
 the following
conditions:
\begin{itemize}
\item[{\rm (a)}] the processes $Y(t)$ and $Z(t)$ are right continuous, nonnegative, and nondecreasing with respect to $t$,
\item[{\rm (b)}] the processes $Y(t)$ and $Z(t)$ are adapted to
$\sg\{w(s): 0\le s\le t\}$, augmented by the $\P$-null sets,
\item[{\rm (c)}] The system \eqref{e.2} has a unique solution $X\cd$ with $X(t)\ge 0$ for any $t\ge 0$.
\end{itemize}

\textbf{The optimal harvesting-seeding problem.} The problem we will be interested in is to maximize the
performance function and find an optimal harvesting strategy $(Y^*, Z^*)\in \mathcal{A}_{x}$ such that
\beq{e.5}
J(x, Y^*, Z^*)=V(x):= \sup\limits_{(Y, Z)\in \mathcal{A}_{x}}J(x, Y, Z).
\eeq
The function $V(\cdot)$ is called the \textit{value function}.
\begin{rem}
We note that the optimal harvesting strategy might not exist, i.e. the maximum over $\mathcal{A}_x$ might not be achieved in $\mathcal{A}_x$.
\end{rem}
\begin{asm}\label{a:1}
We will make the following standing assumptions throughout the paper.
\begin{itemize}
\item[{\rm (a)}]  The functions $b(\cdot)$ and $\sigma(\cdot)$ are continuous. Moreover, for any initial condition $x\in \Sb $, the uncontrolled system \eqref{e.1}  has a unique global solution.
\item[{\rm (b)}]
For any $i=1, \dots, d$, $x, y\in \rr^d$,
${f_i(x)}< g_i(y)
$; $f_i(\cdot)$, $g_i(\cdot)$ are continuous and non-increasing functions.
\end{itemize}
\end{asm}
\begin{rem}
 Note that Assumption \ref{a:1} (a) does not put significant restraints on the dynamics of the species. Our framework therefore contains a very broad class of models. In particular, this covers all Lotka-Volterra competition and predator-prey models as well as the more general Kolmogorov systems \cite{Dang, Li09, Mao2006, HN18}.
The continuity and monotonicity of the functions $f\cd, g\cd$ from Assumption \ref{a:1}(b) are standard \cite{Alvarez00, Zhu11, Ky17}. The additional requirement that ${f_i(x)}< g_i(y)
$ for any $x, y\in \Sb $ can be explained as follows: the cost of seeding an amount of a species is always higher than the benefit received from harvesting the same amount. This makes sense because in order to seed the species, one has to have access to a pool of individuals of this species. For this, one either has to keep individuals at a specific location (thus using resources to sustain them) or one has transport/buy indidivuals. In the setting of optimal dividend payments, these extra costs reflect penalizing factors \cite{Kulenko08} and transaction costs \cite{Jin12, Scheer}.
\end{rem}
We collect some of the results we are able to prove about the value function.

\begin{prop} \label{prop:2} Let Assumption \ref{a:1} be satisfied. Then the following assertions hold.
	\begin{itemize}
	\item[\rm (a)] For any $x, y\in \Sb $,
	\beq{e.3.5}V(y)\le V(x) - f(x)\cdot (x-y)^++g(x)\cdot (x-y)^-.\eeq
	
	\item[\rm (b)] If $V(0)<\infty$, then $V(x)<\infty$ for any $x\in \Sb $ and $V(\cdot)$ is Lipschitz continuous.
	\end{itemize}
\end{prop}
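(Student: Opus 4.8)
The plan is to establish (a) by a direct comparison of admissible strategies, and then to deduce (b) from (a). For (a), fix $x,y\in\Sb$ and $\e>0$, and choose a strategy $(Y',Z')\in\mathcal{A}_y$ that is $\e$-optimal for the initial condition $y$, i.e. $J(y,Y',Z')\ge V(y)-\e$; by a routine approximation (delaying an initial atom by an infinitesimal amount and using the continuity of $f,g$ and of the flow) we may assume $(Y',Z')$ has no jump at time $0$. Starting from $x$, I would then construct a control in $\mathcal{A}_x$ that performs, at time $0$, the lump harvest $\Delta Y(0)=(x-y)^+$ together with the lump seeding $\Delta Z(0)=(x-y)^-$, and thereafter follows $(Y',Z')$. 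Since $\Delta X(0)=\Delta Z(0)-\Delta Y(0)=y-x$, the post-jump state is $X(0)=y$; moreover harvesting $(x-y)^+$ leaves the state at $x\wedge y\ge 0$ and the subsequent seeding only raises coordinates, so $X(t)\ge 0$ throughout and the control is admissible.

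The key accounting point is that, under the convention $X(0-)=x$, the entire time-$0$ transfer is priced at $f(x)$ and $g(x)$ irrespective of the order in which coordinates are adjusted, so its net contribution to \eqref{e.4} is exactly $f(x)\cdot(x-y)^+-g(x)\cdot(x-y)^-$. Because the remainder of the strategy coincides with $(Y',Z')$ issued from $y$, time-homogeneity of \eqref{e.2} identifies the remaining discounted reward with $J(y,Y',Z')$. Hence
$$V(x)\ge f(x)\cdot(x-y)^+-g(x)\cdot(x-y)^-+J(y,Y',Z')\ge f(x)\cdot(x-y)^+-g(x)\cdot(x-y)^-+V(y)-\e,$$
and letting $\e\downarrow 0$ and rearranging gives \eqref{e.3.5}.

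For (b), finiteness is immediate from (a): taking the first argument to be $0$ and the second to be $x$, and using $(0-x)^+=0$ and $(0-x)^-=x$ for $x\in\Sb$, inequality \eqref{e.3.5} becomes $V(x)\le V(0)+g(0)\cdot x$, which is finite whenever $V(0)<\infty$. Together with the trivial bound $V(x)\ge J(x,0,0)=0$ coming from the do-nothing strategy, this yields $0\le V(x)<\infty$ on $\Sb$.

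For Lipschitz continuity I would discard the nonpositive term in \eqref{e.3.5} to get $V(y)-V(x)\le g(x)\cdot(x-y)^-$. Since each $g_i$ is non-increasing, $g_i(x)\le g_i(0)$, so with $G:=\max_{1\le i\le d}g_i(0)<\infty$ and the componentwise bound $(x-y)^-\le|x-y|$ this gives $V(y)-V(x)\le G\sum_{i=1}^d|x_i-y_i|$; exchanging $x$ and $y$ furnishes the reverse inequality, whence $|V(x)-V(y)|\le G\sum_{i=1}^d|x_i-y_i|$. The one genuinely delicate step is the construction in (a): one must verify that superimposing the two lumps at time $0$ onto an $\e$-optimal continuation yields a bona fide admissible control whose cost is the claimed one, which hinges on the left-limit pricing convention (so that both lumps are evaluated at the single state $x$) and on reducing to continuations with no atom at $0$; the rest is bookkeeping.
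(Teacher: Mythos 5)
Your proof is correct and follows essentially the same route as the paper's: prepend the lump transfer $\Delta Y(0)=(x-y)^+$, $\Delta Z(0)=(x-y)^-$ (priced at $f(x)$, $g(x)$ by the $X(0-)$ convention) to a near-optimal continuation from $y$, identify the remaining reward with $J(y,\cdot,\cdot)$, and then deduce (b) from (a) exactly as the paper does, by symmetrizing in $x$ and $y$ and bounding $f,g$ by $f(0),g(0)$ via monotonicity. The one substantive difference is your preliminary reduction to continuations with no atom at $t=0$, and it is a point in your favor rather than a detour: the paper superimposes the transfer on an \emph{arbitrary} $(Y,Z)\in\mathcal{A}_y$ and asserts the identity $J(x,\widetilde Y,\widetilde Z)=f(x)\cdot(x-y)^+-g(x)\cdot(x-y)^-+J(y,Y,Z)$, which actually fails when $(Y,Z)$ has an atom at $0$, because the merged time-zero jump $Y(0)+(x-y)^+$ is priced at $f(x)$ on the left while the jump $Y(0)$ is priced at $f(y)$ inside $J(y,Y,Z)$; the two sides differ by $[f(x)-f(y)]\cdot Y(0)-[g(x)-g(y)]\cdot Z(0)$, which can have either sign. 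Your version is the one for which the bookkeeping genuinely closes. The flip side is that the reduction you dismiss as a ``routine approximation'' is the only real technical content of the repair and is less routine than your parenthesis suggests: since admissible controls are functionals of the driving Brownian path, delaying the initial atom to time $\eta$ requires re-driving the continuation by the shifted increments $w(\eta+\cdot)-w(\eta)$ and jumping at time $\eta$ from the uncontrolled state $\xi(\eta)$ to the original post-jump state, after which one passes to the limit $\eta\downarrow 0$ by a Fatou argument (the seeding cost of that jump is bounded by $g(0)$ dotted with the post-jump state, and collapsing the time-$\eta$ jump to its net part uses $f_i\le g_i$). With that lemma supplied your proof is complete, and it is in fact more careful than the printed one on exactly the point where the printed proof is loose.
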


	\begin{exm}
	{\rm
		In the current setting, contrary to the regular harvesting setting without seeding, $V(0)$ can be nonzero because of the benefits from seedings. Consider the single species system given by
		\beq{e.3.7}
		dX(t)=X(t)(a-bX(t))dt + \sg X(t) dw(t)-dY(t) +dZ(t), \quad X(0)=x,
		\eeq
		where $a, b,$ and $\sg$
		are constants and the price function is $f(x)=1, x\ge 0$.
		It has been shown in \cite{Alvarez98} that, if there is no seeding, the value function is given by
		$$V_0(x)= \psi(x) \text{ for }x<x^*, \quad V_0(x)= x-x^*+\psi(x^*) \text{ for }x\ge x^*,$$
		where $x^*\in (0, \infty)$  and $\psi: [0, \infty)\to [0, \infty)$ is twice continuously differentiable, and $\psi
		(x)>x$ for all $x\in (0, x^*]$.
		Let $g(x)=\kappa\in \mathbb{R}$, where $1<\kappa<\psi(x^*)/x^*$. Let $(Y, Z)\in \mathcal{A}_0$ be such that  $J(0, Y, 0)=V_0(x)$ and $Z(t)=Z(0)=x^*$ for all $t\ge 0$. Then
		$$V(0)\ge J(0, Y,Z)\ge \psi(x^*)-\ka x^*>0.$$
		Since $V(0)>0$, the system does not get depleted in a finite time under an optimal harvesting strategy.
	}	
\end{exm}

\begin{prop}
	Let Assumption \ref{a:1} be satisfied.  Moreover, suppose that there is a positive constant $C$ such that \begin{equation}b_i(x)\le \delta x_i+C, \quad x\in\Sb , \quad i=1, \dots, d.\end{equation}
	Then there exist a positive constant $M$ such that
	$$V(x)\le \sum\limits_{i=1}^d f_i(0) x_i+ M, \quad x\in\Sb .$$
\end{prop}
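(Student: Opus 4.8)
The plan is to reduce the performance function $J(x,Y,Z)$ to a purely linear quantity by exploiting the monotonicity of $f$ together with the price-cost gap in Assumption \ref{a:1}(b), and then to control the resulting discounted increments of the controls by an It\^o/Dynkin estimate driven by the sublinear growth bound $b_i(x)\le\delta x_i+C$.

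First I would record two pointwise bounds valid for every admissible $(Y,Z)$ and every $s\ge 0$. Since $X(s-)\in\Sb$ and $f_i$ is non-increasing, $f_i(X(s-))\le f_i(0)$; since Assumption \ref{a:1}(b) gives $f_i(0)<g_i(y)$ for every $y$, we also have $g_i(X(s-))\ge f_i(0)$. As $dY_i$ and $dZ_i$ are nonnegative measures, these combine into the pathwise inequality
$$f\(X(s-)\)\cdot dY(s)-g\(X(s-)\)\cdot dZ(s)\le \sum_{i=1}^d f_i(0)\,\big(dY_i(s)-dZ_i(s)\big).$$
Integrating against $e^{-\delta s}$ and taking $\E_x$ in \eqref{e.4}, it then suffices to show $\E_x\int_0^\infty e^{-\delta s}\,d(Y_i-Z_i)(s)\le x_i+C/\delta$ for each $i$.

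This last estimate I would obtain by applying It\^o's formula (integration by parts) to $e^{-\delta t}X_i(t)$. Using $dX_i=b_i(X)\,dt+(\sigma\,dw)_i+dZ_i-dY_i$ from \eqref{e.2}, one finds
$$e^{-\delta t}X_i(t)=x_i+\int_0^t e^{-\delta s}\big(b_i(X(s))-\delta X_i(s)\big)\,ds+\int_0^t e^{-\delta s}(\sigma\,dw)_i+\int_{[0,t]}e^{-\delta s}\,d(Z_i-Y_i)(s).$$
Solving for $\int_{[0,t]}e^{-\delta s}\,d(Y_i-Z_i)(s)$ and taking expectations, the growth bound gives $b_i(X(s))-\delta X_i(s)\le C$, so the drift contributes at most $C\int_0^\infty e^{-\delta s}\,ds=C/\delta$, the term $-\E_x[e^{-\delta t}X_i(t)]\le 0$ is discarded using $X_i(t)\ge 0$ from admissibility~(c), and the stochastic integral has zero mean. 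This yields $\E_x\int_{[0,t]}e^{-\delta s}\,d(Y_i-Z_i)(s)\le x_i+C/\delta$; summing over $i$ with weights $f_i(0)$ and taking the supremum over $\mathcal A_x$ gives the claim with $M=(C/\delta)\sum_{i=1}^d f_i(0)$.

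The main obstacle is analytic rather than algebraic. The stochastic integral is only a local martingale, so one must localize along stopping times $\tau_n\uparrow\infty$, say exit times of $X$ from expanding bounded sets on which $\sigma$ is bounded, apply the estimate up to $\tau_n$ where every integral is genuinely finite, and then let $n\to\infty$ using monotone convergence for the nondecreasing $Y_i$ and $Z_i$. Some care is needed when $\E_x\int_0^\infty e^{-\delta s}\,dZ_i(s)=\infty$: then the seeding cost dominates and the strategy either yields $J=-\infty$ or falls outside the class on which $J$ is well defined, so one may restrict to strategies of finite expected discounted seeding cost, for which the passage to the limit is unproblematic and the bound on $V$ follows.
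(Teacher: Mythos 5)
Your proof is correct, and it arrives at the paper's bound with the same constant $M=(C/\delta)\sum_{i=1}^d f_i(0)$, but it is packaged differently. The paper's own proof is two lines: it defines the very same linear function $\Phi(x)=\sum_{i=1}^d f_i(0)x_i+M$, checks that it satisfies the quasi-variational system \eqref{e.3.1} --- the inequality $(\L-\delta)\Phi\le 0$ is exactly your drift estimate $\sum_i f_i(0)\big(b_i(x)-\delta x_i\big)\le C\sum_i f_i(0)\le \delta M$, and the inequalities $f_i(x)\le \partial\Phi/\partial x_i=f_i(0)\le g_i(x)$ are exactly your two pointwise price bounds --- and then invokes the comparison half of the verification theorem (Theorem \ref{prop:a2}(a)), which gives $V\le\Phi$ for any nonnegative $C^2$ solution of \eqref{e.3.1}. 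Your argument re-derives that comparison directly for this particular $\Phi$: your integration by parts on $e^{-\delta t}X_i(t)$ is Dynkin's formula applied to the linear $\Phi$, and because $\nabla\Phi$ is constant the jump terms need no mean-value argument (the role played by Lemma \ref{lem} in the paper's general proof). So the two routes are computationally identical; yours is self-contained and makes the role of each hypothesis transparent (monotonicity of $f_i$ caps the harvest price, the gap $f_i<g_i$ ensures seeding can never be a net discounted gain, the drift condition caps the discounted growth), at the cost of redoing work the paper has already placed in its appendix, while the paper's version is shorter but hides the mechanism inside the general theorem. One point in your favor: your final paragraph confronts the localization and the possible $\infty-\infty$ issue (infinite expected discounted seeding cost, which forces either $J=-\infty$ or an ill-defined $J$) explicitly, whereas the paper's proof of Theorem \ref{prop:a2}(a) passes to the limit with an appeal to ``bounded convergence'' that is no more careful than your treatment.
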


\begin{rem}
	{\rm
		We note
that the condition  on the drift $b\cd$
		is very natural.
		Consider the one-dimensional dynamics given by
			\beq{e.3.16a}
		dX(t)=bX(t)dt + \sg X(t) dw(t)-dY(t) +dZ(t), \quad X(0)=x,
		\eeq
		 with  $f(x)=1, x\ge 0$ and any function $g\cd$.
		It is clear that if $b>\delta$,  the value function in the harvesting problem with no seeding is $$V_0(x)=\inf\limits_{(Y, Z)\in \mathcal{A}_{x}, Z=0 }J(x, Y, Z)=\infty \quad \text{for all}\quad x>0.$$
As a result the value function for \eqref{e.3.16a} will be $V(x)=\infty, x>0.$

Seeding
can also change the finiteness of the value function. Indeed,
consider the harvesting problem
	\beq{e.3.16b}
dX(t)=b(X(t))dt + \sg (X(t)) dw(t)-dY(t) +dZ(t), \quad X(0)=x,
\eeq
with  $f(x)=1, g(x)=2, x\ge 0$.	Suppose that $g(x)=\sg(x)=0$ for $x<1$ and $b(x)=(1+\delta) x( 1-x)$ and $\sg(x)=0$ for $x>1$. Then it is clear that without seeding we get the value function $V_0(x)=\infty$ for $x>1$ and $V_0(x)=x$ for $x\le 1$. When seeding is allowed, we have $V(x)=\infty$ for all $x\ge 0$.
	}
\end{rem}
	
We get the following characterization of the value function.
\begin{thm} \label{t:value}
	Let Assumption \ref{a:1} be satisfied and suppose $V(x)<\infty$ for $x\in {S}$.
	The value function
	$V\cd$ is a viscosity solution to the HJB equation
\begin{equation}\label{e:HJB}
\max\limits_{i} \bigg\{(\L-\delta)V(x), f_i(x)-\dfrac{\partial V}{\partial x_i}(x),  \dfrac{\partial V}{\partial x_i}(x)-g_i(x)\bigg\}= 0.
\end{equation}
	\end{thm}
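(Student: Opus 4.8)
The plan is to read off the two viscosity inequalities from the dynamic programming principle together with the gradient bounds already recorded in \propref{prop:2}. Throughout I would work on the open set $S$, where $V\cd$ is finite and, by \propref{prop:2}(b), Lipschitz. The starting point is the dynamic programming principle: for every $(Y,Z)\in\mathcal{A}_x$ and every bounded stopping time $\theta$,
\beq{e.dpp}
V(x)\ge \E_x\bigg[\int_{[0,\theta)} e^{-\delta s}\big(f(X(s-))\cdot dY(s)-g(X(s-))\cdot dZ(s)\big)+e^{-\delta\theta}V(X(\theta))\bigg],
\eeq
with equality when the supremum over admissible controls is taken. I would first record \eqref{e.dpp} in the usual way, from the tower property of conditional expectation and the Markov property of $X\cd$, and then test $V$ against $\phi\in C^2$.

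\textbf{Supersolution.} Let $x_0\in S$ be a local minimum of $V-\phi$, normalized so that $V(x_0)=\phi(x_0)$ and $V\ge\phi$ near $x_0$. The two gradient terms come directly from \propref{prop:2}(a): taking $y=x_0-\e\ei$ and $y=x_0+\e\ei$ in \eqref{e.3.5}, dividing by $\e$, letting $\e\downarrow 0$, and using $V\ge\phi$ yields $f_i(x_0)\le \partial_i\phi(x_0)\le g_i(x_0)$, so the second and third entries of \eqref{e:HJB} are $\le 0$. For the generator term I would use the suboptimal control that does nothing, $Y\equiv Z\equiv 0$, up to $\theta=\tau_r\wedge h$ (the exit time of the uncontrolled process $X^0\cd$ from a small ball, capped at $h$), so that \eqref{e.dpp} gives $\phi(x_0)=V(x_0)\ge \E_{x_0}[e^{-\delta\theta}\phi(X^0(\theta))]$; Dynkin's formula, division by $\E_{x_0}\theta$, and $h\downarrow 0$ then give $(\L-\delta)\phi(x_0)\le 0$. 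Hence the maximum in \eqref{e:HJB} is $\le 0$ and $V$ is a viscosity supersolution.

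\textbf{Subsolution.} This is the direction I expect to require real work. Let $x_0$ be a local maximum of $V-\phi$ with $V(x_0)=\phi(x_0)$ and $V\le\phi$ near $x_0$, and argue by contradiction: if the maximum in \eqref{e:HJB} were strictly negative at $x_0$, then by continuity of $b,\sigma,f,g$ and $\phi\in C^2$ there are a ball $B_r(x_0)\subset S$ and $\eta>0$ with $(\L-\delta)\phi\le-\eta$, $f_i-\partial_i\phi\le-\eta$, and $\partial_i\phi-g_i\le-\eta$ on $B_r(x_0)$ for all $i$. For an arbitrary admissible control and $\theta=\tau_r\wedge h$, I would apply the change-of-variables (Dynkin) formula to $e^{-\delta t}\phi(X(t))$ adapted to the singular dynamics \eqref{e.2}: the continuous singular parts produce $\int_0^\theta e^{-\delta s}\sum_i(f_i-\partial_i\phi)\,dY_i^c+\int_0^\theta e^{-\delta s}\sum_i(\partial_i\phi-g_i)\,dZ_i^c$, which are $\le 0$ because $dY^c,dZ^c\ge 0$; the drift term is $\le-\eta\,\E_{x_0}\int_0^\theta e^{-\delta s}\,ds$; and the jump term $\sum_{s\le\theta}e^{-\delta s}[\phi(X(s))-\phi(X(s-))]$ combined with the realized reward $\sum_{s\le\theta}e^{-\delta s}(f(X(s-))\cdot\Delta Y(s)-g(X(s-))\cdot\Delta Z(s))$ is $\le 0$: for small jumps staying in $B_r(x_0)$ this follows from the gradient margins together with $V\le\phi$, and for jumps leaving the ball it follows from the global estimate \eqref{e.3.5}. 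Feeding these bounds into \eqref{e.dpp} shows that every admissible control yields strictly less than $V(x_0)$, contradicting the equality in \eqref{e.dpp}.

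The delicate point sits entirely in the subsolution step: producing a correct change-of-variables formula for $e^{-\delta t}\phi(X(t))$ when $Y,Z$ carry both continuous singular parts and jumps, and then controlling those controls that instantaneously jump out of $B_r(x_0)$, so that the strictly positive ``waiting penalty'' $\eta\,\E_{x_0}\int_0^\theta e^{-\delta s}\,ds$ threatens to degenerate. The resolution I would rely on is \propref{prop:2}(a), which bounds the value gained by any harvesting or seeding jump globally and hence guarantees that such a jump never beats $V(x_0)$, while the localization $\theta=\tau_r\wedge h$ preserves a uniform strictly-positive penalty for the remaining controls. The finiteness and continuity of $V\cd$ from \propref{prop:2}(b) are what make the test-function comparisons and the limit passages legitimate.
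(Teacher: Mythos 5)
Your first direction (test functions $\phi$ touching $V$ from below, conclusion $\max\le 0$ — what the paper calls the subsolution property) is correct: deriving the two gradient inequalities directly from \eqref{e.3.5} in Proposition \ref{prop:2}(a) is a legitimate shortcut, since that proposition is proved by a control-shifting argument independent of the theorem, and the do-nothing control plus Dynkin handles $(\L-\delta)\phi(x^0)\le 0$ exactly as in the paper (which instead re-runs the perturbation argument with an instantaneous harvest or seed of size $\eta$ at time $0$).

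The genuine gap is in the other direction, precisely at the point you flag as delicate, and your proposed resolution does not close it. Your contradiction argument needs the supremum over \emph{all} admissible controls of the DPP right-hand side to be strictly below $V(x^0)$, which requires a penalty bounded below by a positive constant \emph{uniformly over controls}. But the only penalty you retain is $\eta\,\E\int_0^\theta e^{-\delta s}ds$: you bound the continuous singular terms and the jump terms by $0$, and for jumps exiting the ball you invoke \eqref{e.3.5}, which yields only ``payoff $\le V(x^0)$'' with no margin. Consider the control that at time $0$ instantaneously harvests enough to leave the ball: then $\theta=0$, your waiting penalty vanishes, and your estimate degenerates to ``payoff $\le V(x^0)$.'' That is not a contradiction — the DPP asserts the supremum \emph{equals} $V(x^0)$, and a family of payoffs each at most (even each strictly less than) $V(x^0)$ can still have supremum equal to $V(x^0)$. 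The paper closes this hole with two devices absent from your proposal. First, it keeps the margin $-A$ on \emph{every} control term — the continuous parts $dY^c,dZ^c$, the jumps inside the ball, and the within-ball portion of the exit jump — rather than discarding them as $\le 0$. Second, a jump crossing $\partial B_\e(x^0)$ is split at the boundary via the random $\lambda\in(0,1]$ with $X(\theta-)+\lambda\big(X(\theta)-X(\theta-)\big)\in\partial B_\e(x^0)$: the fraction $\lambda$ inside the ball carries the $A$-margin, and only the remaining fraction $(1-\lambda)$ is estimated by \eqref{e.3.5}. The total penalty is then $A$ times the quantity in \eqref{e.3.27c}, and Lemma \ref{lem:1} — proved by applying Dynkin's formula to the auxiliary function $\Phi(x)=K_0\big(|x-x^0|^2-\e^2\big)$ — shows that this quantity is at least $\ka_0=K_0\e^2>0$ uniformly over all admissible controls: to leave the ball one must either spend discounted time or exert a definite amount of harvesting/seeding. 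This uniform $A\ka_0$ is what turns the DPP into $V(x^0)\ge V(x^0)+A\ka_0$, the desired contradiction. Without Lemma \ref{lem:1} and the $\lambda$-splitting, your argument establishes no contradiction for controls whose near-optimality is achieved through immediate jumps, so the subsolution step does not go through as written.
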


\begin{rem}
Theorem \ref{t:value} is a theorem that tells us how to find the value function. The problem is that the solutions of \eqref{e:HJB} are not always smooth enough for $\L V$ to make sense. This is why we work with viscosity solutions of \eqref{e:HJB}.

We next explain what a viscosity solution means. For
	any $x^0\in S$ and any function $\phi\in C^2(S)$ such that $V(x_0)=\phi(x_0)$ and $V(x)\geq \phi(x)$
	for all $x$ in a neighborhood of $x^0$, we have
\begin{equation*}	
  \max\limits_{i} \bigg\{(\L-\delta)\phi(x^0), f_i(x^0)-\dfrac{\partial \phi}{\partial x_i}(x^0),  \dfrac{\partial \phi}{\partial x_i}(x^0)-g_i(x^0)\bigg\}\le 0.
\end{equation*}
Similarly, for any $x^0\in S$ and any function $\ph\in C^2(S)$ satisfying $V(x_0)=\phi(x_0)$ and $V(x)\leq \phi(x)$
	for all $x$ in a neighborhood of $x^0$, we have
\begin{equation*}\max\limits_{i} \bigg\{(\L-\delta)\ph(x^0), f_i(x^0)-\dfrac{\partial \ph}{\partial x_i}(x^0),  \dfrac{\partial \ph}{\partial x_i}(x^0)-g_i(x^0)\bigg\}\ge 0.
\end{equation*}
This extends the results from \cite{HS55a, HS55b, Lungu01} where the authors had to assume that the coefficients $b, \sigma$ are bounded or the prices $f_i$ are not density-dependent. Usually the functions $b,\sigma$ are not bounded and the prices depend on the densities of the species. Moreover, we consider both harvesting and seeding. Therefore, our results provide a significant generalization of those from \cite{HS55a, Lungu01}.
\end{rem}

We also get the following verification t heorem, that tells us that if a function satisfies certain properties, then it will be the value function. We note that this is natural analogue with seeding of Theorem 2.1 from \cite{Lungu01,ALO16}.

\begin{thm}\label{t:ver}
	Let Assumption \ref{a:1} be satisfied. Suppose that there exists a function $\Phi : \Sb  \mapsto [0, \infty)$ such that
	$\Phi\in C^2(\Sb )$   and that $\Phi\cd$ solves the following coupled system of quasi-variational inequalities
	\beq{e:ineq}
	\sup\limits_{(x, i)} \bigg\{(\L-\delta)\Phi(x), f_i(x)-\dfrac{\partial \Phi}{\partial x_i}(x),  \dfrac{\partial \Phi}{\partial x_i}(x)-g_i(x)\bigg\}\le 0,
	\eeq
	where $(\L-\delta)\Phi(x)=\L \Phi(x)-\delta \Phi(x)$. Then the following assertions hold.
	\begin{itemize}
		\item[{\rm (a)}]  We have
\begin{equation}\label{e:mon}		
V(x)\le \Phi(x)\quad \text{for any } x\in {S}.
\end{equation}
		\item[{\rm (b)}]   Define the non-intervention region 
		$$\mathcal{C}= \left\{x\in S:  f_i(x)<\dfrac{\partial \Phi}{\partial x_i}(x)<g_i(x) \right\}.$$
		Suppose that 
\begin{equation}\label{e:hjb_1}
(\L -r)\Phi(x)=0,
\end{equation}
 for all $x\in \mathcal{C}$, and that there exists a harvesting-seeding strategy $\big(\wdt{Y}, \wdt{Z}\big)
		\in \mathcal{A}_{x}$
		and a corresponding process $\wdt{X}$
		such that the following statements hold.
		
		\medskip
		
		\begin{itemize}
			\item[{\rm (i)}] $\wdt{X}(t)\in \overline{\mathcal{C}}$ for Lebesgue almost all $t\ge 0.$
			\item[{\rm (ii)}] $\int\limits_0^{t} \left[\nabla \Phi(\wdt{X}(s))- f(\wdt{X}(s))\right] \cdot d\wdt{Y}^c(s)=0$ for any $t\ge
			0$.
			\item[{\rm (iii)}] $\int\limits_0^{t} \Big[g(\wdt{X}(s))-\nabla \Phi(\wdt{X}(s)) \Big] \cdot d\wdt{Z}^c(s)=0$ for any $t\ge
			0$.
		
			\item[{\rm (iv)}] If $\wdt{X}(s)\ne \wdt{X}(s-)$, then
			$$\Phi(\wdt{X}(s))-\Phi(\wdt{X}(s-))=- f(\wdt{X}(s-)) \cdot \Delta \wdt{Z}(s)
			.$$
	\item[{\rm (v)}] $\lim\limits_{N\to\infty}E_{x}\Big[e^{-rT_N}\Phi(\wdt{X}(T_N))\Big]=0$, where for each $N=1, 2, \dots$,
			\beq{kyy}\beta_N:=\inf\{t\ge 0: |X(t)|\ge N\},\quad T_N: = N \wedge \beta_N.\eeq
		\end{itemize}
		Then $V(x)=W(x)$ for all $x\in S$, and $\big(\wdt{Y},\wdt{Z}\big)$ is an optimal harvesting-seeding strategy.
	\end{itemize}
\end{thm}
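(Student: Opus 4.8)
The plan is to use a stochastic-analytic argument built on applying Dynkin's formula (It\^o's formula for semimartingales) to the process $e^{-\delta t}\Phi(X(t))$ along an arbitrary admissible control, and then to show the resulting inequality is saturated precisely by the candidate strategy $(\wdt Y, \wdt Z)$. For part (a), I would fix an arbitrary $(Y,Z)\in\mathcal A_x$ with associated state process $X$ and apply the generalized It\^o formula to $e^{-\delta t}\Phi(X(t))$ up to the stopping time $T_N$ from \eqref{kyy}. This splits the differential into the continuous drift/diffusion part governed by $(\L-\delta)\Phi$, the contribution of the continuous parts $dY^c,dZ^c$ weighted by $-\nabla\Phi\cdot dY^c+\nabla\Phi\cdot dZ^c$, and a sum over jumps $\Phi(X(s))-\Phi(X(s-))$. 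Taking expectations, the martingale term from the $dw$ integral vanishes (using the localization at $\beta_N$ so the integrand is bounded). The inequality \eqref{e:ineq} then furnishes three pointwise bounds: $(\L-\delta)\Phi\le 0$ kills the drift integral, $\partial\Phi/\partial x_i\ge f_i(x)$ bounds the continuous harvesting term, and $\partial\Phi/\partial x_i\le g_i(x)$ bounds the continuous seeding term; for the jumps one uses the mean-value/convexity-type estimate along the segment from $X(s-)$ to $X(s)=X(s-)-\Delta Y(s)+\Delta Z(s)$ together with the same gradient bounds to show each jump contributes at most $f(X(s-))\cdot\Delta Y(s)-g(X(s-))\cdot\Delta Z(s)$. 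Rearranging yields
\begin{equation*}
\E_x\Big[e^{-\delta T_N}\Phi(X(T_N))\Big]\le \Phi(x)-\E_x\Big[\int_0^{T_N}e^{-\delta s}f(X(s-))\cdot dY(s)-\int_0^{T_N}e^{-\delta s}g(X(s-))\cdot dZ(s)\Big].
\end{equation*}
Since $\Phi\ge 0$, dropping the left-hand side and letting $N\to\infty$ via monotone/dominated convergence gives $\Phi(x)\ge J(x,Y,Z)$, and taking the supremum over admissible controls proves \eqref{e:mon}.

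For part (b), I would repeat the same It\^o expansion but now for the specific candidate strategy $(\wdt Y,\wdt Z)$ and its state $\wdt X$, and show every inequality above becomes an equality. Condition (i) places $\wdt X$ in $\overline{\mathcal C}$ a.e., where by \eqref{e:hjb_1} we have $(\L-\delta)\Phi=0$, so the drift integral vanishes exactly (here I would need the discount rate $r$ in \eqref{e:hjb_1} to coincide with $\delta$; I am reading $r=\delta$). Conditions (ii) and (iii) are precisely the statements that the continuous harvesting and seeding integrals are saturated, i.e. $\nabla\Phi=f$ on the support of $d\wdt Y^c$ and $\nabla\Phi=g$ on the support of $d\wdt Z^c$. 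Condition (iv) guarantees that each jump achieves equality in the jump estimate. Finally condition (v) ensures the boundary term $\E_x[e^{-\delta T_N}\Phi(\wdt X(T_N))]\to 0$, so that no value is lost in the limit $N\to\infty$. Consequently $\Phi(x)=J(x,\wdt Y,\wdt Z)$, and combined with part (a) this forces $\Phi(x)=V(x)$ and identifies $(\wdt Y,\wdt Z)$ as optimal.

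The main obstacle I anticipate is the careful justification of the jump term and the exchange of limits. The generalized It\^o formula for a process with both continuous-singular and jump components requires writing the jump contribution as $\Phi(\wdt X(s))-\Phi(\wdt X(s-))$ and controlling it by $\nabla\Phi$ along the jump; since $\Delta\wdt X=\Delta\wdt Z-\Delta\wdt Y$ may move several coordinates at once, I would integrate $\nabla\Phi$ along a straight-line path and invoke the inequalities $f_i\le\partial\Phi/\partial x_i\le g_i$ coordinatewise, which is where Assumption~\ref{a:1}(b) (in particular $f_i<g_i$) is used to keep the estimate consistent. A secondary technical point is the passage $N\to\infty$: for part (a) the sign $\Phi\ge 0$ makes dropping the boundary term free, but one must still justify interchanging expectation and limit in the harvesting/seeding integrals, which I would handle by monotone convergence since $Y,Z$ are nondecreasing and $f,g>0$. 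The uniform integrability needed for (v) is supplied as a hypothesis, so the remaining work is bookkeeping rather than a genuinely new estimate.
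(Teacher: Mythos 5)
Your overall architecture is exactly the paper's: apply Dynkin's formula to $e^{-\delta t}\Phi(X(t))$ localized at the stopping times $T_N$ of \eqref{kyy}, use \eqref{e:ineq} to kill the drift integral and to dominate the $dY^c$, $dZ^c$ and jump contributions by the corresponding price terms, drop the nonnegative boundary term $\E_x[e^{-\delta T_N}\Phi(X(T_N))]$, let $N\to\infty$, and take the supremum over admissible strategies; for part (b) you rerun the computation for $(\wdt{Y},\wdt{Z})$ and use (i)--(v) to turn every inequality into an equality. You also correctly resolved the two typos in the statement (reading $r=\delta$ in \eqref{e:hjb_1} and (v), and $W=\Phi$ in the conclusion), and your treatment of the $N\to\infty$ passage is at the same level of rigor as the paper's.

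There is, however, one step where your plan as written differs from the paper's and would not go through in full generality: the jump estimate. You propose to integrate $\nabla\Phi$ along the straight line $\gamma(t)=X(s-)+t\,(\Delta Z(s)-\Delta Y(s))$ and invoke $f_i\le \partial\Phi/\partial x_i\le g_i$ coordinatewise. The QVI does give these bounds at the points $\gamma(t)$, but the performance function \eqref{e.4} pays prices evaluated at $X(s-)$, so you must still compare $f_i(\gamma(t))$ with $f_i(X(s-))$ and $g_i(\gamma(t))$ with $g_i(X(s-))$ using the monotonicity of $f$ and $g$; this requires the path to stay on one side of $X(s-)$ in the componentwise partial order (below it for the harvesting comparison, above it for the seeding comparison). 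When a single jump simultaneously harvests some coordinates and seeds others, the straight-line path moves down in some components and up in others, so neither comparison is available, and the hypothesis $f_i(x)<g_i(y)$ of Assumption \ref{a:1}(b) does not rescue this, since it compares $f$ against $g$ rather than $f$ (or $g$) at two different states. The paper's device is its auxiliary Lemma \ref{lem}: the jump is split through the intermediate point $X^*(s)$ into a pure seeding segment from $X(s-)$ up to $X^*(s)=X(s-)+\Delta Z(s)$ followed by a pure harvesting segment from $X^*(s)$ down to $X(s)$, and the mean value theorem is applied on each monotone segment separately, so that (for instance) every point of the seeding segment dominates $X(s-)$ and the comparison $g(\widetilde{X}(s))\le g(X(s-))$ is legitimate. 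Replacing your straight-line path by this two-segment decomposition, and keeping the rest of your argument intact, yields the paper's proof.
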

\begin{rem}
Following \cite{Lungu01} we note that if we can find a function satisfying \eqref{e:ineq}, \eqref{e:mon} and \eqref{e:hjb_1}, then one can construct a strategy satisfying assumptions (i), (ii), (iii) and (iv) from Theorem \ref{t:ver} part b) by solving the Skorokhod stochastic differential equation for the reflection of the process $X(t)$ in the domain $\mathcal{C}$. We refer the reader to \cite{Lungu01, B98, F16, LS84} for more details about Skorokhod stochastic differential equations.
\end{rem}

We can extend Principle 2.1 from \cite{Lungu01} as follows.
\begin{prin}[\textbf{One-at-a-time principle}]
Suppose the diffusion matrix $\sg(x)\sg'(x)$ is nondegenerate for all $x\in S$. Then it is almost always optimal 
to harvest or to seed from at most one species at a time.
\end{prin}
\begin{proof}
We follow \cite{Lungu01}. Assume for simplicity $d=2$ so that we have two species. The non-intervention region $\mathcal{C}$ is bounded by the four curves curves $\Lambda^f_1, \Lambda^f_2, \Lambda^g_1, \Lambda^g_2$ given by
$$
\Lambda^f_i = \left\{(x_1,x_2)\in S~\Bigg|~ \frac{\partial \Phi}{\partial x_i}=f_i(x_1,x_2)\right\}
$$
and
$$
\Lambda^g_i = \left\{(x_1,x_2)\in S~\Bigg|~ \frac{\partial \Phi}{\partial x_i}=g_i(x_1,x_2)\right\}.
$$

Note that we would have simultaneous harvesting and seeding of species $i$ only when the process is at $\Lambda^f_i \cap \Lambda^g_i $, simultaneous harvesting of the two species only when the process is at $\Lambda^f_1 \cap \Lambda^f_2 $, simultaneous harvesting of species 1 and seeding of species $2$ only when the process is at $\Lambda^f_1 \cap \Lambda^g_2$, etc.  Now, if the diffusion is non-degenerate, the probability it hits a set of the form $\Lambda^f_i \cup \Lambda^f_j$ for $i\neq j$ or $\Lambda^f_i \cup \Lambda^g_j $ is equal to zero. This argument can be extended to $n$ dimensions - see Principle 2.1 from \cite{Lungu01}.
\end{proof}

\subsection{Numerical Scheme}\label{sec:num}
A closed-form solution to the HJB equation from Theorem \ref{t:value} is virtually impossible to obtain. Moreover, the initial value of $V(0)$ is not specified. In order to by-pass these difficulties and to gain information about the value function and the optimal harvesting-seeding strategy we provide a numerical approach.
Using the Markov chain approximation method \cite{Budhiraja07, Jin12, Kushner91, Kushner92},
we
construct a controlled  Markov chain
in discrete time to approximate the controlled diffusions.
Let $h>0$ be a discretization parameter.
Since the real population densities cannot be infinite, we choose a large number $U>0$ and define the class $\mathcal{A}^U_{x}\subset \mathcal{A}_{x}$ that consists of strategies $(Y,Z)\in \mathcal{A}_x$ such that the resulting process $X$ stays in $[0,U]^d$ for all times. The class $\mathcal{A}^U_{x}$ can be constructed by using Skorokhod stochastic differential equations \cite{B98, F16, LS84} in order to make sure that the process stays in $[0,U]^d$ for all $t>0$.

Let $(\tilde Y^U, \tilde Z^U)\in \mathcal{A}^U_{x}$ and $V^U(x)$ be defined as the optimal harvesting-seeding strategy and the value function when we restrict the problem to the class $\mathcal{A}^U_{x}\subset \mathcal{A}_{x}$
\begin{equation}\label{e:VU}
J(x, \tilde Y^U, \tilde Z^U)=V^U(x):=\sup\limits_{(Y, Z)\in \mathcal{A}_{x}^U} J(x, Y, Z)
\end{equation}

\begin{rem}

We conjecture that, generically, the optimal strategy will live in $\mathcal{A}_x^U$ for $U$ large enough, i.e. there exists $U>0$ such that for all $x\in [0,U]^d$ we have
$$
J(x, Y^*, Z^*)=V(x):= \sup\limits_{(Y, Z)\in \mathcal{A}_{x}}J(x, Y, Z) =V^U(x):= \sup\limits_{(Y, Z)\in \mathcal{A}_{x}^U} J(x, Y, Z)=J(x, \tilde Y^U, \tilde Z^U).
$$
The verification Theorem \ref{t:ver} provides a heuristic argument for this conjecture.

\end{rem}
Assume without loss of generality that $U$
is an integer multiple of $h$.
Define
$$S_{h}: = \{x=(k_1 h, \dots, k_d h)'\in \rr^d: k_i=0, 1,  2, \dots\}\cap [0, U]^d.$$
Let $\{X^h_n: n=0, 1, \dots\}$
be a discrete-time controlled Markov chain with state space $S_{h}$. We define the difference
$$\Delta X_n^h = X_{n+1}^h-X_{n}^h.$$
At any discrete-time  step $n$, one can either harvest, seed, or do nothing. We use $\pi^h_n$ to denote the action at step $n$, where $\pi^h_n=-i$ if there is seeding of species $i$, $\pi^h_n=0$ if there is no seeding or harvesting of species $i$, and $\pi^h_n=i$ if there is harvesting. Denote by $\Delta Y^h_n$ and $\Delta Z^h_n$ the harvesting amount and the seeding amount for the chain at step $n$, respectively.
If $\pi^h_n=0$, then the increment
$\Delta X_n^h$ is to behave like an increment of $\int b dt +\int \sg dw$ over a small time interval. Such  a step is also called ``diffusion step''. If $\pi^h_n=-i$, then $\Delta Y^h_n=0\in \rr^d$ and $\Delta Z^h_n=h\ei$. If $\pi^h_n=i$, then $\Delta Y^h_n=h\ei$ and $\Delta Z^h_n=0\in \rr^d$. Note that $\Delta X^h_n=-\Delta Y^h_n + \Delta Z^h_n$.
Moreover, we can write
\beq{e.4.1}
\Delta X_n^h = \Delta X_n^h I_{\{ \text{diffusion step at } n\}} +  \Delta X_n^h I_{\{\text{harvesting step at }n\}}+ \Delta X_n^h I_{\{\text{seeding step at }n\}}.
\eeq
For definiteness, if $X^{h}_{n, i}$ is the $i$th component of the vector $X^h_n$ and $\{j: X^{h}_{n, j}=U\}$ is non-empty, then step $n$ is a harvesting step on species $\min\{j: X_{n, j}^{h}=U\}$.
Let $\pi^h = (\pi_0^h, \pi_1^h, \dots)$ denote the sequence of control actions.
We denote by $p^h\(x, y)|\pi\)$ the transition probability from state $x$ to another state $y$ under the control $\pi$.
Denote
$\mathcal{F}^h_n=\sigma\{X^h_m, \pi^h_m, m\le n\}$.

The sequence $\pi^h$
is said to be admissible if it satisfies the following conditions:
\begin{itemize}
	\item[{\rm (a)}]
	$\pi^h_n$ is
	$\sigma\{X^h_0, \dots, X^h_{n}, \pi^h_0, ..., \pi^h_{n-1}\}-\text{adapted},$
	\item[{\rm (b)}]  For any $x\in S_h$, we have
	$$\P\{ X^h_{n+1} =x | \mathcal{F}^h_n\}= \P\{X^h_{n+1}= x | X^h_n, \pi^h_n\} = p^h( X^h_n, x| \pi^h_n),$$
	\item[{\rm (c)}] Denote by $X^{h}_{n, i}$ the $i$th component of the vector $X^h_n$. Then \beq{}
	\barray
	\aad \P\big( \pi^h_{n}=\min\{j: X^{h}_{n, j} = U\}  | X^{h}_{n, j} = U \text{ for some } j\in \{1, \dots, d \}, \mathcal{F}^h_n\big)=1.
	\earray\eeq
	\item[{\rm (d)}] $X^h_n\in S_h$ for all $n=0, 1, 2, \dots$.
\end{itemize}
The class of all admissible control sequences $\pi^h$ for initial state $x$ will be denoted by
$\mathcal{A}^h_{x}$.

For each
couple
 $(x, i)\in S_h\times \{0, \pm 1, \dots, \pm d\}$,
we define
a family of the interpolation intervals $\Delta t^h (x, i)$. The values of $\Delta t^h (x, i)$ will be specified later. Then we define
\beq{e.4.3}
\barray
\aad t^h_0 = 0,\quad  \Delta t^h_m = \Delta t^h(X^h_m, \pi^h_m),
\quad  t^h_n = \sum\limits_{m=0}^{n-1} \Delta t^h_m.\\
\earray
\eeq
For $x\in S_h$ and $\pi^h\in \mathcal{A}^h_{x}$, the performance function for the controlled Markov chain is defined as
\beq{e.4.4}
J^h(x, \pi^h) =  \E\sum_{m=1}^{\infty} e^{-\delta t_m^h}\bigg[f (X^h_m)\cdot \Delta Y_{m}^{h}-g(X^h_m) \cdot \Delta Z_{m}^{h}   \bigg].
\eeq
The value function of the controlled Markov chain is
\beq{e.4.5}
V^h(x) = \sup\limits_{\pi^h\in \mathcal{A}^h_{x}} J^h (x, \pi^h).
\eeq

\begin{thm}
	Suppose Assumptions \ref{a:1} and \ref{a:2} hold. Then $V^h(x)\to V^U(x)$ as $h\to 0$.  
	Thus, for sufficiently small $h$, a near-optimal harvesting-seeding strategy of the controlled Markov chain is also a near-optimal harvesting-seeding policy of the original continuous-time problem.
\end{thm}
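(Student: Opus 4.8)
The plan is to run the weak-convergence version of the Kushner--Dupuis Markov chain approximation program. Everything hinges on the \emph{local consistency} of the chain with the diffusion \eqref{e.1}, which is precisely what Assumption~\ref{a:2} encodes: on a diffusion step one has $\E[\Delta X_n^h\mid\mathcal{F}_n^h,\pi_n^h=0]=b(X_n^h)\Delta t_n^h+o(\Delta t_n^h)$ and $\Cov[\Delta X_n^h\mid\mathcal{F}_n^h,\pi_n^h=0]=\sigma(X_n^h)\sigma'(X_n^h)\Delta t_n^h+o(\Delta t_n^h)$, whereas a control step displaces the state by exactly $\pm h\ei$. First I would form the cadlag continuous-time interpolations $X^h(\cdot),Y^h(\cdot),Z^h(\cdot)$ on the clock $t_n^h$ of \eqref{e.4.3}, split off the martingale part $M^h(\cdot)$ of the centered noise, and rewrite the dynamics as $X^h(t)=x+\int_0^t b(X^h(s))\,ds+M^h(t)-Y^h(t)+Z^h(t)+\rho^h(t)$, where the discretization error $\rho^h(t)\to0$ uniformly on compact time sets by local consistency.

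The crux of the argument is a uniform a priori bound on the controls. Since every admissible strategy keeps $X^h(\cdot)$ inside the compact box $[0,U]^d$, where $b,\sigma,f,g$ are bounded and (by continuity and Assumption~\ref{a:1}(b)) satisfy $\sup_{[0,U]^d} f_i<\inf_{[0,U]^d} g_i$, a summation-by-parts identity applied to $e^{-\delta t}X_i^h(t)$ shows that the expected discounted net flux $\E\big[\int e^{-\delta s}\,dY_i^h\big]-\E\big[\int e^{-\delta s}\,dZ_i^h\big]$ stays bounded uniformly in $h$. Feeding this into the reward and using the strict price gap $\gamma_i:=\inf g_i-\sup f_i>0$ yields, for any strategy whose reward is within $\epsilon$ of optimal, a bound of the form $\sum_i\gamma_i\,\E[\int e^{-\delta s}dZ_i^h]\le\text{const}$, and hence a uniform bound on the expected discounted total variation of both $Y^h$ and $Z^h$. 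This is the step where the seeding-cost assumption $f_i<g_i$ is essential: it prevents near-optimal strategies from accumulating unbounded mass through repeated harvest--reseed cycles.

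With this bound in hand, tightness of $(X^h,Y^h,Z^h,M^h)$ in the Skorokhod space follows from the standard criteria, the discount factor controlling the tails at infinity. I would then extract a weakly convergent subsequence by Prokhorov's theorem and, via the Skorokhod representation theorem, pass to a common probability space carrying almost sure convergence. The martingale-problem characterization identifies the limit of $M^h$ as $\int_0^\cdot\sigma(X(s))\,dw(s)$ for a Brownian motion $w$, so the limit $(X,Y,Z)$ solves \eqref{e.2} with $(Y,Z)\in\mathcal{A}_x^U$. The theorem then reduces to two inequalities: for $\liminf_h V^h(x)\ge V^U(x)$ I take a control that is $\epsilon$-optimal for $V^U$, discretize it into chain controls $\pi^h$, and verify $J^h(x,\pi^h)\to J(x,Y,Z)$; for $\limsup_h V^h(x)\le V^U(x)$ I take $\epsilon$-optimal chain controls, pass to the subsequential limit just constructed, and use that the resulting admissible control has reward $\lim_h J^h(x,\pi^h)\le V^U(x)$.

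The main obstacle I anticipate is the convergence of the reward functional itself, namely $J^h(x,\pi^h)=\E\sum_m e^{-\delta t_m^h}[f(X_m^h)\cdot\Delta Y_m^h-g(X_m^h)\cdot\Delta Z_m^h]$ to $\E_x[\int e^{-\delta s}f(X(s-))\cdot dY(s)-\int e^{-\delta s}g(X(s-))\cdot dZ(s)]$. The difficulty is that each integrand pairs the only weakly convergent state against the singular control increments, and both are driven by the same noise, so one cannot pass to the limit factor by factor; extra care is needed at the jump times of $(Y,Z)$, where the distinction between $X^h(s-)$ and $X^h(s)$ is felt and where the forced harvesting on the boundary $\{x_i=U\}$ contributes additional mass. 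Resolving this requires the joint almost sure convergence from the Skorokhod coupling, the continuity of $f$ and $g$, and a uniform-integrability argument furnished by the discounted-variation bound established above.
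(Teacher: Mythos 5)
There is a genuine gap, and it sits exactly where your argument leans hardest: the claim that a uniform bound on the expected discounted variation of $(Y^h,Z^h)$ yields tightness of $(X^h,Y^h,Z^h,M^h)$ in the Skorokhod space. Bounded (even uniformly bounded) variation of nondecreasing processes does not give $J_1$-tightness. The dangerous scenario is precisely the one the chain produces naturally: a near-optimal chain control may interleave harvesting steps of size $h$ with diffusion steps of interpolated length $O(h^2)$, so that a macroscopic amount of harvest is removed over a time window of width $o(1)$ through many $O(h)$ jumps. The limit control has a genuine jump, but no subsequence of $Y^h$ converges to it in $J_1$, and the modulus $w'(Y^h,\delta,T)$ stays bounded away from $0$; the sequence is simply not tight in that topology. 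Since your $\limsup_h V^h(x)\le V^U(x)$ direction must handle \emph{arbitrary} near-optimal chain controls, you cannot rule this behavior out, and the subsequent steps (Skorokhod representation, martingale-problem identification, passage to the limit in the reward) all collapse without tightness. The paper is explicit that this is a real obstruction: the family $\{Y^h\cd,Z^h\cd\}_{h>0}$ is possibly \emph{not} tight, which is why it introduces the ``stretched-out'' time scale \eqref{e.4.20} and Definition \ref{def:1}, under which each control step consumes time $h$; the rescaled controls $\wdh{Y}^h,\wdh{Z}^h$ then have increments of size $h$ over intervals of length $h$, hence are uniformly Lipschitz and trivially tight, and tightness and limit identification are carried out for the rescaled tuple (Theorems \ref{thm:thm} and \ref{thm:thm4.4}) before inverting the time change via $R(t)=\inf\{s:\wdh{T}(s)>t\}$ to recover the original dynamics and conclude $V^h(x)\to V^U(x)$ (Theorem \ref{thm:r}).

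The same missing device is also what resolves the difficulty you correctly flag in your last paragraph but leave open: the convergence of $J^h(x,\pi^h)$, where the weakly convergent state is paired against the singular control increments at jump times. In the stretched time scale all processes in the tuple $\wdh{H}^h$ converge jointly, the controls are continuous, and the reward can be passed to the limit before undoing the time change; without rescaling there is no known way to justify this limit for general near-optimal chain controls. Your a priori variation bound via the price gap $\inf g_i > \sup f_i$ is a reasonable observation (it does preclude profitable harvest--reseed cycling), but it is neither used by the paper nor sufficient for the topology you need; the price gap enters the paper only through Assumption \ref{a:1}(b) and the structure of the value function, not as a tightness mechanism. The overall skeleton of your program (local consistency under Assumption \ref{a:2}, interpolation, weak limits, and the two-sided inequality argument) matches the Kushner--Dupuis framework the paper invokes, but as written the proof fails at the tightness step and at the reward-convergence step, both of which require the time-rescaling argument.
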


\section{Numerical Examples}\label{sec:fur}
\subsection{Single species system.}
We consider a single
species ecosystem. The dynamics that includes harvesting and seeding will be given by
\beq{e.6.1}
d X(t) = X(t)\big(b-cX(t)\big)
 + \sg X(t) dw(t)-dY(t) +dZ(t).
\eeq
For an admissible strategy $(Y, Z)$ we have \beq{e.6.2}J(x, Y, Z)=\E\left[\int_0^\infty e^{-\delta s} f(X(s-))dY(s)-\int_0^\infty e^{-\delta s} g(X(s-))dZ(s)\right].\eeq
Based on the algorithm constructed above and in Appendix \ref{sec:alg}, we carry out the computation by value iterations.
Let $(Y_0,Z_0)$ be the policy that drives the system to extinction
immediately and has no seeding. Then
$J(x, Y_0, Z_0)=f(x)x$ for all $x$.
Recall from \cite{Alvarez98} that $J(x, Y_0, Z_0)$
is also referred to as current harvesting potential.
Letting $(Y_0, Z_0)$ be the initial strategy, we set the initial values $$V_0^h(x)=f(x)x, \quad x = 0, h, 2h, \dots, U = 10.$$
We  outline how to find the values of $V(\cdot)$ as follows.
At each level $x=h,2h, \dots, U$, denote by $\pi(x, n)$ the action one chooses, where $\pi(x, n)=1$ if there is harvesting, $\pi(x, n)=-1$ if there is seeding, and
$\pi(x, n)=0$ if there is no harvesting or seeding. We initially let $\pi(x, 0)=1$ for all $x$ and we try to find better harvesting-seeding strategies.
We find an improved value $V^h_{n+1}(x)$
and record the corresponding optimal action by
$$\pi(x, n)=\argmax \left\{i=-1, 0, 1: V^{h, i}_{n+1} (x, \al)\right\} ,\quad
V^h_{n+1}(x)= V^{h, \pi(x, n)}_{n+1} (x),$$
where
\bea
\aad V^{h, 1}_{n+1}(x)=V^h_n(x-h) +  f(x)h,\\
\aad V^{h, -1}_{n+1}(x)=V^h_n(x+h) - g
(x)h,\\
\aad V^{h}_{n+1, 0} (x) =e^{-\delta \Delta t^h(x, 0) }\Big[ V^{h}_{n} (x+h) p^h (x, x+h | \pi )+V^{h}_{n}  (x-h) p^h ( x, x-h | \pi \big)
\Big].
\eea
The iterations stop as soon as the
increment
$V^h_{n+1}\cd-V^h_n\cd$
reaches
some tolerance level. We set the error tolerance to be $10^{-8}$.

\begin{figure}[h!tb]
	\begin{center}
		\includegraphics[height=2.5in,width=4.5in]{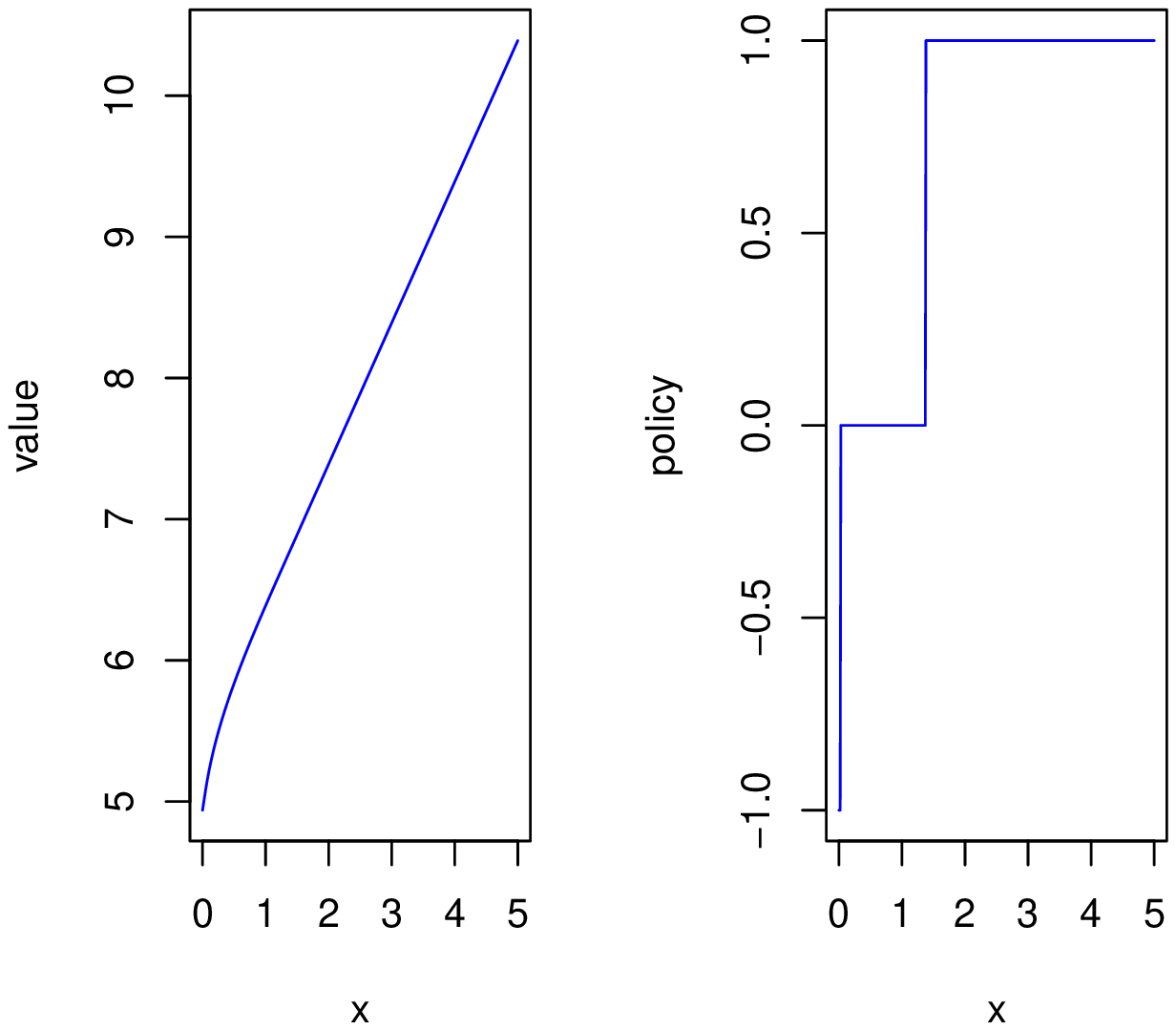}
		\caption{Value function and optimal policies: $f(x)=1, g(x)=3$ for $x\geq 0.$} \label{fig1}\end{center}
\end{figure}

\begin{figure}[h!tb]
	\begin{center}
		\includegraphics[height=2.5in,width=4.5in]{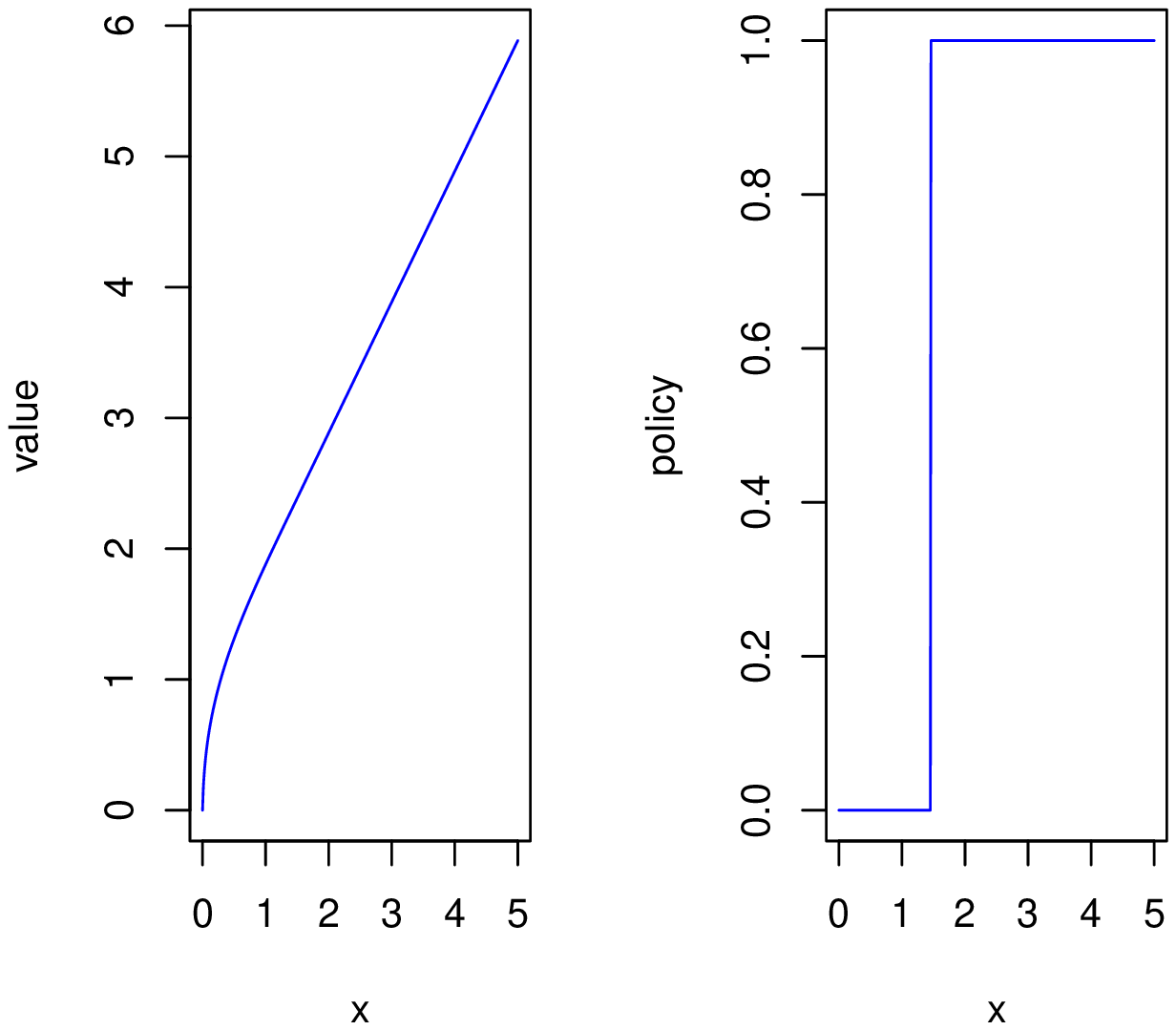}
		\caption{Value function and optimal policies: $f(x)=1, g(x)=50$ for $x\geq 0$.} \label{fig2}\end{center}
\end{figure}

\begin{figure}[h!tb]
	\begin{center}
		\includegraphics[height=2.5in,width=4.5in]{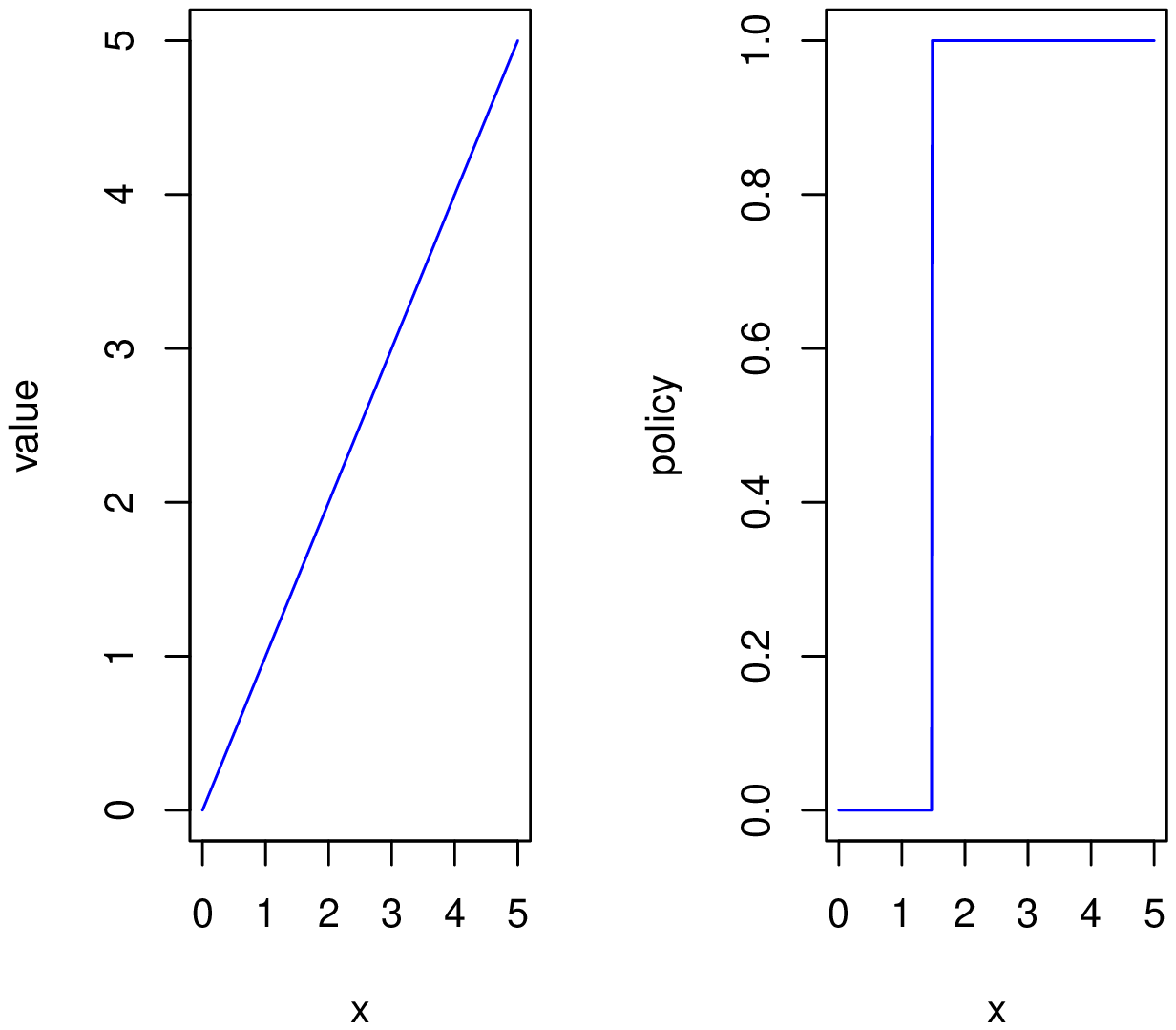}
		\caption{Value function and optimal policies: $f(x)=1, g(x)=3, \sigma(x)=1000$ for  $x\geq 0.$} \label{fig3}\end{center}
\end{figure}

\begin{figure}[h!tb]
	\begin{center}
		\includegraphics[height=2.5in,width=4.5in]{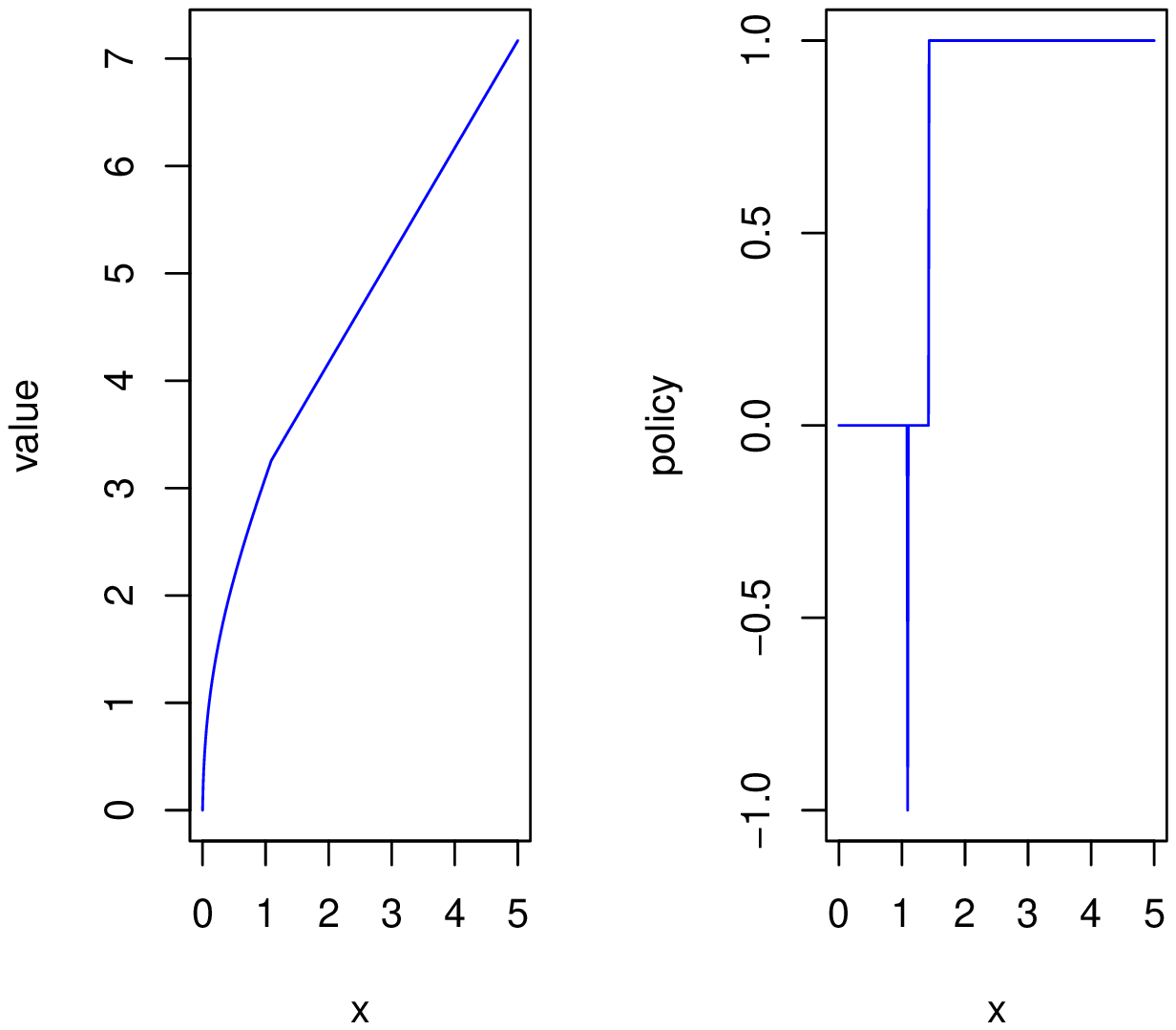}
		\caption{Value function and optimal policies: $f(x)=1$, $g(x)=100$ for $x\le 1$, $g(x)=1.02$ for $x>1.1$, and $g$ is an affine function on the interval $(1, 1.1).$} \label{fig4}\end{center}
\end{figure}

The numerical experiments provide evidence that the following conjecture holds

\begin{conj}
Suppose we have one species that evolves according to \eqref{e.6.1} and suppose Assumption \ref{a:1} holds. One can construct the optimal harvesting-seeding strategy $(Y^*,Z^*)$ as follows. There exist lower and upper thresholds $0\leq u^*< v^*<\infty$ such that after $t\geq 0$ the density of the species always stays in the interval $[u^*,v^*]$. More explicitly if $X(0-)=x$ then
\begin{equation}\label{e:optimal_strategy}
\begin{aligned}
(Y^{u^*}(t), Z^{v^*}(t))
&= \begin{cases}
\left((x-v^\ast)^+,(u^\ast-x)^+\right) & \mbox{if $t=0$,} \\
(L(t,v^\ast), L(t,u^\ast)) & \mbox{if $t>0$}
\end{cases}
\end{aligned}
\end{equation}
where $L(t,u^\ast)$ (respectively $L(t,v^\ast)$) is the local time push of the process $X$ at the boundary $u^*$ (respectively $v^*$).
\end{conj}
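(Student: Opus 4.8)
The plan is to reduce the conjecture to an application of the verification theorem \thmref{t:ver} by exhibiting a $C^2$ candidate value function $\Phi$ whose non-intervention region is exactly the open band $\mathcal C=(u^*,v^*)$, and then constructing, via a doubly-reflected Skorokhod problem, a strategy that keeps the controlled process inside $\overline{\mathcal C}=[u^*,v^*]$. Guided by the HJB equation \eqref{e:HJB}, I would look for $\Phi$ in three pieces. Since $f<g$ by Assumption \ref{a:1}(b), the constraint $f(x)\le\Phi'(x)\le g(x)$ must hold everywhere, the lower constraint being active where harvesting is optimal and the upper one where seeding is optimal. Expecting $\Phi$ to be increasing and concave with $\Phi'$ decreasing, the harvesting region should be $\{x\ge v^*\}$ with $\Phi'(x)=f(x)$, the seeding region $\{x\le u^*\}$ with $\Phi'(x)=g(x)$, and the band $(u^*,v^*)$ the place where $(\L-\delta)\Phi=0$. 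Concretely I would set
\[
\Phi(x)=\begin{cases}\Phi(u^*)-\int_x^{u^*}g(s)\,ds, & 0\le x\le u^*,\\[2pt] A\phi_1(x)+B\phi_2(x), & u^*<x<v^*,\\[2pt]\Phi(v^*)+\int_{v^*}^x f(s)\,ds,& x\ge v^*,\end{cases}
\]
where $\phi_1,\phi_2$ are two independent solutions of the linear ODE $\tfrac12\sg^2x^2\phi''+x(b-cx)\phi'-\delta\phi=0$ coming from the Verhulst--Pearl generator in \eqref{e.6.1}.

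Second, I would determine the four unknowns $A,B,u^*,v^*$ from the $C^2$ smooth-fit (smooth-pasting) conditions at the two free boundaries: $\Phi'(u^*)=g(u^*)$, $\Phi''(u^*)=g'(u^*)$, $\Phi'(v^*)=f(v^*)$ and $\Phi''(v^*)=f'(v^*)$; continuity of $\Phi$ is automatic from the construction. This is a nonlinear system of four equations in four unknowns, and I would establish the existence of a solution with $0\le u^*<v^*<\infty$ by a shooting/monotonicity argument, fixing one boundary and sliding the other while tracking the sign of $(\L-\delta)\Phi$ and the monotonicity of $\Phi'$, as in the single-boundary analysis of Alvarez \cite{Alvarez98}. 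The degenerate case $u^*=0$, arising when $g$ is so large that seeding never pays, would be handled separately and should reduce to the pure-harvesting threshold of \cite{Alvarez98}, with the lower reflection absent.

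Third, with $\Phi$ in hand I would verify globally that it satisfies the quasi-variational inequality \eqref{e:ineq}: inside the band this is the ODE \eqref{e:hjb_1}; in the harvesting region $\Phi'=f\le g$, so only $(\L-\delta)\Phi\le0$ must be checked, and symmetrically in the seeding region where $\Phi'=g\ge f$. These sign checks should follow from the concavity and monotonicity built into the construction together with the non-increasing prices. I would then build the candidate optimal strategy as the solution of the Skorokhod problem for $X$ doubly reflected in $[u^*,v^*]$: for $t>0$ the upward push at $u^*$ defines $\wdt Z(t)=L(t,u^*)$ and the downward push at $v^*$ defines $\wdt Y(t)=L(t,v^*)$, while at $t=0$ an instantaneous harvest $(x-v^*)^+$ or seeding $(u^*-x)^+$ brings the state into $[u^*,v^*]$. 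By design $\wdt X(t)\in\overline{\mathcal C}$, and the reflecting local times act only on the boundaries where $\Phi'=f$ and $\Phi'=g$, so conditions (i)--(iii) hold; (iv) is the value-matching identity across the initial jump, which holds because $\Phi'=f$ along the harvested segment (resp.\ $\Phi'=g$ along the seeded segment); and (v) follows immediately from the boundedness of $\wdt X$ in the compact band, which makes $\Phi(\wdt X(\cdot))$ bounded and hence $e^{-\delta T_N}\Phi(\wdt X(T_N))\to0$. The verification theorem then yields $V=\Phi$ and the optimality of $(\wdt Y,\wdt Z)$.

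The hard part is the free-boundary step. Under Assumption \ref{a:1} the prices $f,g$ are only assumed continuous and non-increasing and $b,\sigma$ only continuous, which is not enough regularity to guarantee a $C^2$ smooth fit or even that the smooth-pasting system is uniquely solvable with $u^*<v^*$. Establishing existence, uniqueness and the correct ordering of the thresholds, and simultaneously checking $(\L-\delta)\Phi\le0$ throughout both intervention regions, is exactly where the argument can break down, which is why the statement is offered as a conjecture supported by the value iteration of Section \ref{sec:fur} rather than as a theorem. A secondary difficulty is the boundary behaviour at $0$ in the case $u^*=0$, where $0$ is a degenerate boundary of the logistic diffusion and conditions (v) and the admissibility requirement \ref{a:1} need separate care.
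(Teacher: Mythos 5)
There is no proof in the paper to compare against: the statement is presented as a \emph{conjecture}, supported only by the value-iteration numerics of Section~\ref{sec:fur} and, heuristically, by the verification machinery of \thmref{t:ver} together with the remark following it (the Skorokhod-reflection construction borrowed from Lungu--\O ksendal). Your plan --- build a piecewise candidate $\Phi$ from two independent solutions of $(\L-\delta)\phi=0$ on the band, impose second-order smooth fit at the two free boundaries, check the quasi-variational inequality \eqref{e:ineq} globally, realize the strategy as the doubly reflected diffusion on $[u^*,v^*]$, and invoke \thmref{t:ver} --- is exactly the route the paper's own results suggest, and it is the standard Alvarez/Lungu--\O ksendal approach. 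But as a proof it is incomplete precisely where you say it is: the existence, uniqueness and ordering $0\le u^*<v^*<\infty$ of solutions to the four smooth-fit equations, and the sign verification $(\L-\delta)\Phi\le 0$ on both intervention regions, are never carried out, and under Assumption~\ref{a:1} alone (with $f,g$ merely continuous and non-increasing) they cannot be: your smooth-fit system already presupposes $f,g\in C^1$ near the boundaries. So what you have is a program consistent with the statement's status as a conjecture, not a proof of it.

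One concrete obstruction deserves sharper emphasis than your closing caveat gives it: the paper's own Figure~\ref{fig4} experiment ($g=50$ for $x\le 1$, $g\approx 1$ above $1.1$) produces a numerically optimal policy that seeds \emph{only} at the single density $x=1.1$ and never below it. This is incompatible with the prescription \eqref{e:optimal_strategy}: for $x<u^*$ the conjectured initial jump $Z(0)=(u^*-x)^+$ is charged in \eqref{e.4} at the rate $g(X(0-))=g(x)$, which is prohibitively large below the threshold, so a barrier strategy with that initial jump cannot be optimal in this example. Consequently the conjecture, and hence any proof along your lines, implicitly requires additional hypotheses on the prices (constant, or at least slowly varying, $f$ and $g$), or must replace the initial-jump prescription by one whose cost is integrated along the jump path. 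Without isolating such a hypothesis, step three of your argument (the global QVI check ``following from concavity and monotonicity'') would fail for density-dependent $g$ of the Figure~\ref{fig4} type, because $\Phi'=g$ on $[0,u^*]$ forces $\Phi'$ to be non-monotone there and the claimed concavity is lost.
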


For the first numerical experiment we take $b=3,  c=2, \sg=1$ in \eqref{e.6.1}. Let $\delta=0.05$,  and $f(x)=1,g(x)=3$ for all $x\ge 0$.
Figure \ref{fig1} shows the value function $V(x)$ as a function of the initial population $x$ and
provides optimal policies, with $1$ denoting harvesting,  $-1$ denoting
seeding, and $0$ denoting no harvesting or seeding.
It can be seen from Figure \ref{fig1}  that  the optimal policy is a barrier strategy. There are levels $L_1$ and $L_2$ such that $[0, L_1)$ is the seeding region, $[L_1, L_2)$ is the diffusion region where there is no control of the population, and $[L_2, U]$ is the
harvesting region. Because of the benefit from seeding, $V(0)>0$.
These observations agree with those in the analogous financial setting \cite{Jin12, Scheer}.

Next, suppose that $g$ takes very large values. In particular, we take $g(x)=50, x\ge 0$. In this case, one can observe that there is no seeding; see Figure \ref{fig2}. In other words, because the cost of seeding is very high, the optimal strategy does not benefit from seeding.

To explore how noise impacts the problem, we explore what happens when $\sigma=1000$ and keep the other coefficients the same. The results are shown on Figure \ref{fig3}. It turns out, as expected, that if the noise is very large, the value function is close to the current harvesting potential $J(\cdot, Y_0, Z_0)$ and no seeding is needed. This is because the large noise will drive the species extinct with probability $1$ and, therefore, the optimal strategy is to immediately harvest all individuals. We refer to \cite{Alvarez98, Ky16, Hening, AH18} for more insight regarding how noise impacts harvesting.

We emphasize that the idea of species seedings is in part motivated by capital injections in optimal dividend problems. In \cite{Jin12,Scheer}, theoretical and experimental results show that  it
is optimal to have seeding (capital injections) only if the surplus hits zero or if it is smaller than a sufficiently low threshold.
In our formulation, both $f$ and $g$ can be density-dependent and this leads to new phenomena. To exhibit this, we take
$g(x)=50$ for $x\le 1$, $g(x)=1.1$ for $x>1.1$, and let $g$ be an affine function on $(1, 1.1)$. The results
from Figure \ref{fig4} tell us that we should only have seeding when the population has density $x=1.1$.

\subsection{Two-species ecosystems}

	\begin{figure}[h!tb]
		\begin{center}
			\includegraphics[height=4in,width=3.5in]{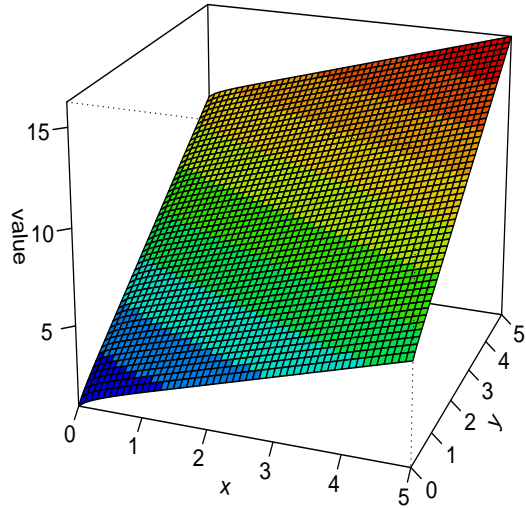}
			\caption{Value function vs initial population for a two-species competitive model.} \label{fig5}\end{center}
	\end{figure}
	
	\begin{figure}[h!tb]
		\begin{center}
			\includegraphics[height=4in,width=3.5in]{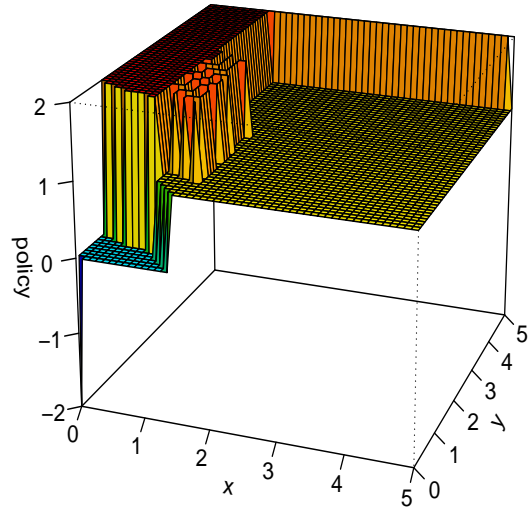}
			\caption{Optimal harvesting-seeding strategy vs population size for a two-species competitive model.} \label{fig6}\end{center}
	\end{figure}
	
	\begin{figure}[h!tb]
		\begin{center}
			\includegraphics[height=2.5in,width=4.5in]{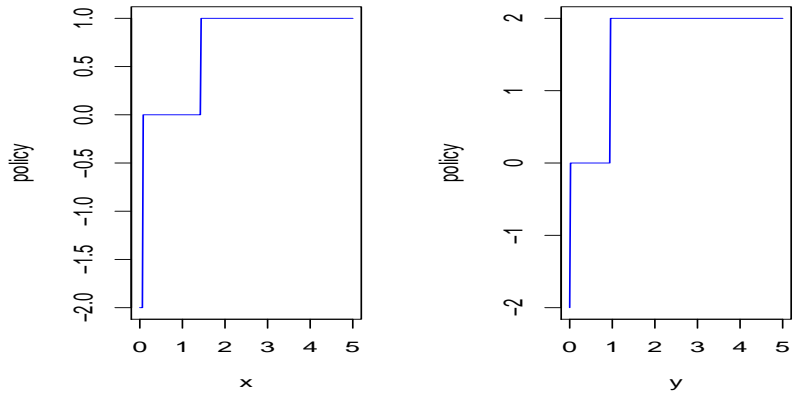}
			\caption{Optimal harvesting-seeding strategy vs population size for a two-species competitive model: the case $y=0$ (the left picture) and $x=0$ (the right picture).} \label{fig7}\end{center}
	\end{figure}
	
\begin{exm} {\rm 
		Consider two species competing according to the following stochastic Lotka-Volterra system 
		\beq{}\barray
		\aad d X_1(t)  = X_1(t)\Big(b_1-a_{11}X_1(t) - a_{12} X_2(t)   \Big) + \sg_1X_1(t)dw(t)-dY_1(t) + dZ_1(t)\\
		\aad d X_2(t)  = X_2(t)\Big(b_2-a_{21}X_1(t) - a_{22} X_2(t)   \Big) + \sg_2X_2(t)dw(t)-dY_2(t) + dZ_2(t)
		\earray
		\eeq
		and suppose that $\delta=0.05$, $f_1(x)=1$, $f_2(x)=2$, $g_1(x)=4, g_2(x)=4$ for all $x\in [0, \infty)^2$. We take
		$U = 5$. In addition, set
		\begin{equation*}
		b_1=3,  a_{11}=2, a_{12}=1,  \sg_1=3,  b_2=2,  a_{21}=1, a_{22}=2,  \sg_2=3.
		\end{equation*}
			Figure \ref{fig5} shows the value function $V$ as a function of initial population sizes $(x,y)$. 
		Figure \ref{fig6}  provides the optimal harvesting-seeding policies, with ``1" denoting harvesting of species 1,  ``-1" denoting
		seeding of species 1, ``2" denoting harvesting of species 2,  ``-2" denoting
		seeding of species 2 and ``0" denoting no harvesting or seeding.
		It can be seen from Figure \ref{fig6} that when population size of each species is larger than a certain level, it is optimal to harvest. However, for a very large region, harvesting of species 1 is the first choice. Moreover, one can observe that it is never optimal to seed species 1. As shown in
		Figure 7, if we assume both species densities are $0$ initially, we should only seed species 2. This tells us that the benefits obtained from species 2 are larger than those from species 1 due to its higher price; i.e, $f_2(x)=2>1=f_1(x)$. 
	}
\end{exm}
\begin{figure}[h!tb]
		\begin{center}
			\includegraphics[height=4in,width=3.5in]{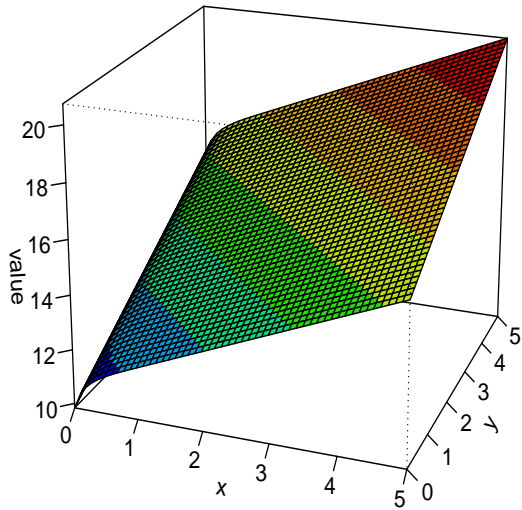}
			\caption{Value function vs initial population for a two-species predator-prey model.} \label{fig8}\end{center}
	\end{figure}
	
	\begin{figure}[h!tb]
		\begin{center}
			\includegraphics[height=4in,width=3.5in]{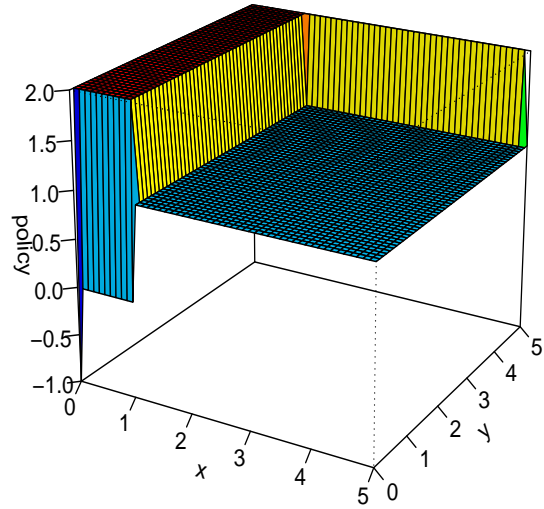}
			\caption{Optimal harvesting-seeding strategy vs population size for a two-species predator-prey model.} \label{fig9}\end{center}
	\end{figure}
	
	\begin{figure}[h!tb]
		\begin{center}
			\includegraphics[height=2.5in,width=4.5in]{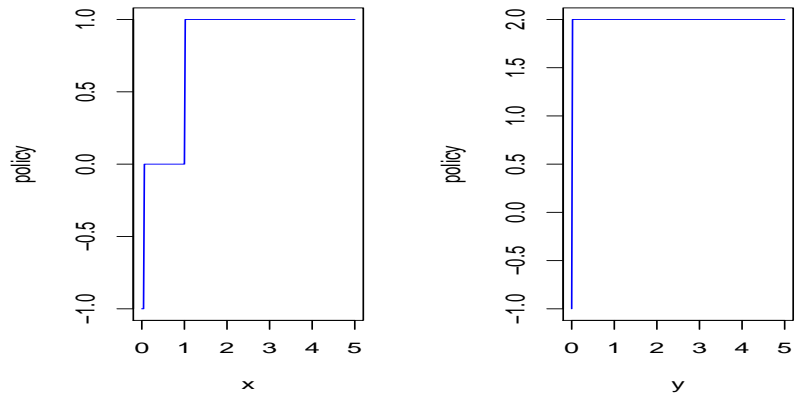}
			\caption{Optimal harvesting-seeding strategy vs population size for a two-species predator-prey model: the case $y=0$ (the left picture) and $x=0$ (the right picture).} \label{fig10}\end{center}
	\end{figure}
	
\begin{exm}{\rm 
		Consider a predator-prey system modelled by the stochastic Lotka--Volterra system
		\beq{}\barray
		\aad d X_1(t)  = X_1(t)\Big(b_1-a_{11}X_1(t) - a_{12} X_2(t)   \Big) + \sg_1X_1(t)dw(t)-dY_1(t) + dZ_1(t)\\
		\aad d X_2(t)  = X_2(t)\Big(-b_2 + a_{21}X_1(t) - a_{22} X_2(t)   \Big) + \sg_2X_2(t)dw(t)-dY_2(t) + dZ_2(t).
		\earray
		\eeq
		Conditions for the coexistence and extinction of the differenmt species can be found in \cite{HN18}.
		Suppose that $\delta=0.05$, $f_1(x)=1$, $f_2(x)=1$, $g_1(x)=6, g_2(x)=6$ for all $x\in [0, \infty)^2$. We take
		$U = 5$. In addition, let
		\begin{equation*}
		b_1=2,  a_{11}=1.2, a_{12}=1,  \sg_1=1.2,  b_2=1,  a_{21}=1.2, a_{22}=7,  \sg_2=1.3.
		\end{equation*}
		Figure \ref{fig8} shows the value function $V$ as a function of initial population $(x,y)$. 
		Figure \ref{fig9}  provides the optimal harvesting-seeding strategies, with ``1" denoting harvesting on species 1,  ``-1" denoting
		seeding on species 1, ``2" denoting harvesting on species 2,  ``-2" denoting
		seeding on species 2 and ``0" denoting no harvesting or seeding. We see in Figure \ref{fig10} that, as expected, since the predator goes extinct if there is no prey, the optimal strategy is to immediately harvest all the predators at time $t=0$.  
	}
\end{exm}

{\bf Acknowledgments.} KT was supported in part by Vietnam National Foundation for Science and Technology Development (NAFOSTED) under Grant 101.03-2017.23. GY was supported in part by the National Science Foundation under grant DMS-1710827.

\bibliographystyle{amsalpha}
\bibliography{harvest}

\appendix

\section{Properties of Value Functions}\label{sec:pro}

This section is devoted to several properties of the value function. Particularly, the lemma below will be helpful in proving the boundedness of the value function.

\begin{lem}\label{lem}
 Let Assumption \ref{a:1} be satisfied and pick $\Phi\cd\in C^2(\rr^d)$. Then for any $s\ge 0$, there exist  $\widehat{X}(s)\in \rr^d$ and $\widetilde{X}(s)\in \rr^d$ such that $\widehat{X}(s)\le X(s)$, $\widetilde{X}(s)\le X(s)$, and $$\Phi({X}(s) )- \Phi({X}(s-))= -\Delta Y(s)\cdot \nabla \Phi (\widehat{X}(s)) + \Delta Z(s)\cdot \nabla \Phi (\widetilde{X}(s)).$$
\end{lem}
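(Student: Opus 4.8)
The plan is to represent the jump of $\Phi$ at time $s$ as the sum of two line integrals of $\nabla\Phi$, one carrying the harvesting increment $-\Delta Y(s)$ and one carrying the seeding increment $\Delta Z(s)$, and then to collapse each line integral to a single gradient evaluation by the mean value theorem for integrals. Since $X(s)=X(s-)+\Delta Z(s)-\Delta Y(s)$, I would route the jump through the intermediate post-harvest, pre-seed state $p:=X(s-)-\Delta Y(s)$, for which $X(s)=p+\Delta Z(s)$, and split
$$\Phi(X(s))-\Phi(X(s-))=\big[\Phi(p)-\Phi(X(s-))\big]+\big[\Phi(X(s))-\Phi(p)\big].$$
When $s$ is not a jump time both increments vanish, $X(s)=X(s-)$, and one may simply take $\widehat X(s)=\widetilde X(s)=X(s)$, so only jump times require work.

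For the two brackets I would apply the fundamental theorem of calculus along the straight segments joining their endpoints, giving $\Phi(p)-\Phi(X(s-))=-\int_0^1\Delta Y(s)\cdot\nabla\Phi\big(X(s-)-t\,\Delta Y(s)\big)\,dt$ and $\Phi(X(s))-\Phi(p)=\int_0^1\Delta Z(s)\cdot\nabla\Phi\big(p+t\,\Delta Z(s)\big)\,dt$. Because $\Phi\in C^2(\rr^d)$, both integrands are continuous scalar functions of $t\in[0,1]$, so the mean value theorem for integrals yields $t_1,t_2\in[0,1]$ with
$$\Phi(X(s))-\Phi(X(s-))=-\Delta Y(s)\cdot\nabla\Phi(\widehat X(s))+\Delta Z(s)\cdot\nabla\Phi(\widetilde X(s)),$$
where $\widehat X(s):=X(s-)-t_1\,\Delta Y(s)$ and $\widetilde X(s):=p+t_2\,\Delta Z(s)$. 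This already produces the asserted identity, so all that remains is to place both evaluation points below $X(s)$.

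The seeding point is immediate: $\widetilde X(s)=X(s)-(1-t_2)\,\Delta Z(s)$ with $1-t_2\ge 0$ and $\Delta Z(s)\ge 0$, hence $\widetilde X(s)\le X(s)$ componentwise. Securing $\widehat X(s)\le X(s)$ for the harvesting point is the step I expect to be the main obstacle. The construction confines $\widehat X(s)$ to the segment from $X(s-)$ to $p=X(s-)-\Delta Y(s)$, so that $X(s-)-\Delta Y(s)\le\widehat X(s)\le X(s-)$, and the lower endpoint already obeys $X(s-)-\Delta Y(s)\le X(s)$ since $\Delta Z(s)\ge 0$; the difficulty is that the mean value theorem does not let one choose $t_1$, so in a coordinate where the net jump is a harvest the interior point $\widehat X(s)$ can sit above $X(s)$. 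To reach the stated bound I would replace the blind mean value abscissa by an intermediate value argument inside the admissible region $\{x\le X(s)\}$, seeking $\widehat X(s)\le X(s)$ that realizes the prescribed directional value $\Delta Y(s)\cdot\nabla\Phi(\widehat X(s))$ by sweeping the continuous scalar field $x\mapsto\Delta Y(s)\cdot\nabla\Phi(x)$ over that region. Verifying that this field actually attains the required value on $\{x\le X(s)\}$ — which is where the coordinatewise signs of $\Delta X(s)$ and any monotonicity of $\nabla\Phi$ must be brought to bear — is the one delicate point; the remainder is the routine telescoping-and-mean-value computation above.
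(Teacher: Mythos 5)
Your telescoping construction is, in substance, the paper's own technique: the paper also splits the jump at an intermediate state and applies the mean value theorem on the two resulting segments, the only structural difference being the order. The paper seeds first, routing through $X^*(s)$ whose harvested coordinates carry the pre-jump values and whose seeded coordinates carry the post-jump values (equivalently $X^*(s)=X(s-)+\Delta Z(s)$, using implicitly that $\Delta Y_i(s)>0$ forces $\Delta Z_i(s)=0$), while you harvest first through $p=X(s-)-\Delta Y(s)$; your FTC-plus-integral-MVT computation is an equivalent way of producing the two evaluation points, and it even avoids the paper's implicit disjointness assumption on the jump coordinates. So the identity $\Phi(X(s))-\Phi(X(s-))=-\Delta Y(s)\cdot\nabla\Phi(\widehat X(s))+\Delta Z(s)\cdot\nabla\Phi(\widetilde X(s))$ is established correctly in your proposal, as is $\widetilde X(s)\le X(s)$ for your ordering.

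The step you flagged as the main obstacle is, however, a genuine gap, and your proposed repair cannot close it: the bound $\widehat X(s)\le X(s)$ is \emph{false} for general $\Phi\in C^2$, so no intermediate-value sweep over $\{x\le X(s)\}$ can produce the required point. In one dimension take $\Phi(x)=x^2$ with a pure harvest $X(s-)=2$, $\Delta Y(s)=1$, $X(s)=1$: the identity forces $\nabla\Phi(\widehat X(s))=\Phi(2)-\Phi(1)=3$, hence $\widehat X(s)=3/2>X(s)$, while every $x\le 1$ has $\nabla\Phi(x)\le 2<3$. Assumption \ref{a:1} constrains $f$ and $g$, not $\nabla\Phi$, so there is no monotonicity of $x\mapsto\Delta Y(s)\cdot\nabla\Phi(x)$ available to you. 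In fact the paper's own proof does not deliver the printed inequality either: its mean-value points lie on the segments, i.e. $X(s)\le\widehat X(s)\le X^*(s)$ and $X(s-)\le\widetilde X(s)\le X^*(s)$, so the direction ``$\widehat X(s)\le X(s)$'' in the statement (and repeated in the paper's proof) is evidently a sign slip. What the construction actually yields, and what the subsequent applications with non-increasing $f,g$ (e.g. the jump estimate \eqref{e.3.4}) actually use, is that $\widehat X(s)$ sits between the post- and pre-harvest states (so $\widehat X_i(s)\le X_i(s-)$ on harvested coordinates, whence $f(\widehat X(s))\ge f(X(s-))$ there) and $\widetilde X(s)\ge X(s-)$ (whence $g(\widetilde X(s))\le g(X(s-))$). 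So your diagnosis of where the difficulty lies was exactly right; the correct resolution is to fix the inequality's direction and adopt the seed-first intermediate point, not to hunt for an evaluation point below $X(s)$, which need not exist.
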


\begin{proof}
	Without loss of generality, we suppose that $\Delta Y_i(s)> 0$ for $i=1, \dots, k$ and $\Delta Y_i(s)=0$ for $i=k+1, \dots, {d}$, where $k\le d$.
	Define
	$$ {X}^*(s)=\big(X_1(s-), \dots, X_k(s-), X_{k+1}(s), \dots, X_d(s)\big)'.$$
	
	Note that
	$\Delta Z_i(s)= 0$ for $i=1, \dots, k$ and $\Delta Y_i(s)=0$ for $i={k+1}, \dots, d$.
	We can check that $$X(s)-{X}^*(s)=-\Delta Y(s)\le 0 \quad \text{and} \quad {X}^*(s)-{X}(s-)=\Delta Z(s) \ge 0.$$
	By the mean value theorem, there is a point $\widehat{X}(s)\le X(s)$ on the line segment connecting ${X}(s)$ and ${X}^*(s)$ such that
	\beq{e.3.25}
	\Phi({X}(s) )- \Phi({X}^*(s)) = -\Delta Y(s)\cdot \nabla \Phi (\widehat{X}(s)).
	\eeq
	Similarly, there is a point $\widetilde{X}(s)\le X(s)$ on the line segment connecting ${X}(s-)$ and ${X}^*(s)$ such that
	\beq{e.3.26}
	\Phi({X}^*(s) )- \Phi({X}(s-)) = \Delta Z(s)\cdot \nabla \Phi (\widetilde{X}(s)).
	\eeq
	The conclusion follows from \eqref{e.3.25} and \eqref{e.3.26}.
\end{proof}

\begin{thm}\label{prop:a2}
	Let Assumption \ref{a:1} be satisfied. Suppose that there exists a function $\Phi : \Sb  \mapsto [0, \infty)$ such that
	$\Phi\in C^2(\Sb )$   and that $\Phi\cd$ solves the following coupled system of quasi-variational inequalities
	\beq{e.3.1}
	\sup\limits_{(x, i)} \bigg\{(\L-\delta)\Phi(x), f_i(x)-\dfrac{\partial \Phi}{\partial x_i}(x),  \dfrac{\partial \Phi}{\partial x_i}(x)-g_i(x)\bigg\}\le 0,
	\eeq
	where $(\L-\delta)\Phi(x)=\L \Phi(x)-\delta \Phi(x)$. Then the following assertions hold.
	\begin{itemize}
		\item[{\rm (a)}]  We have
		$$V(x)\le \Phi(x)\quad \text{for any } x\in {S}.$$
		\item[{\rm (b)}]   Define the non-intervention region 
		$$\mathcal{C}= \{x\in S:  f_i(x)<\dfrac{\partial \Phi}{\partial x_i}(x)<g_i(x) \}.$$
		Suppose that $(\L -r)\Phi(x)=0$ for all $x\in \mathcal{C}$, and that there exists a harvesting strategy $\big(\wdt{Y}, \wdt{Z}\big)
		\in \mathcal{A}_{x}$
		and a corresponding process $\wdt{X}$
		such that the following statements hold.
		
		\medskip
		
		\begin{itemize}
			\item[{\rm (i)}] $\wdt{X}(t)\in \mathcal{C}$ for Lebesgue almost all $t\ge 0.$
			\item[{\rm (ii)}] $\int\limits_0^{t} \Big[\nabla \Phi(\wdt{X}(s))- f(\wdt{X}(s))\Big] \cdot d\wdt{Y}^c(s)=0$ for any $t\ge
			0$.
			\item[{\rm (iii)}] $\int\limits_0^{t} \Big[g(\wdt{X}(s))-\nabla \Phi(\wdt{X}(s)) \Big] \cdot d\wdt{Z}^c(s)=0$ for any $t\ge
			0$.
			\item[{\rm (iv)}] $\lim\limits_{N\to\infty}E_{x}\Big[e^{-rT_N}\Phi(\wdt{X}(T_N))\Big]=0$, where for each $N=1, 2, \dots$,
			\beq{kyy}\beta_N:=\inf\{t\ge 0: |X(t)|\ge N\},\quad T_N: = N \wedge \beta_N.\eeq
			\item[{\rm (v)}] If $\wdt{X}(s)\ne \wdt{X}(s-)$, then
			$$\Phi(\wdt{X}(s))-\Phi(\wdt{X}(s-))=- f(\wdt{X}(s-)) \cdot \Delta \wdt{Z}(s)
			.$$
		\end{itemize}
		Then $V(x)=W(x)$ for all $x\in S$, and $\big(\wdt{Y},\wdt{Z}\big)$ is an optimal harvesting strategy.
	\end{itemize}
\end{thm}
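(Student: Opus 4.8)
The plan is to prove both parts by applying the generalized Itô (Dynkin) formula to the discounted process $e^{-\delta t}\Phi(X(t))$ along the controlled dynamics \eqref{e.2}, and then reading off the inequalities \eqref{e.3.1} term by term. Since $\sigma$ and $\nabla\Phi$ need not be bounded and $X$ could in principle approach the boundary, I would first localize using the stopping times $T_N$ from \eqref{kyy}: on the stochastic interval $[0,T_N]$ the process stays in the compact set $\{|x|\le N\}$, so $\sigma(X)$ and $\nabla\Phi(X)$ are bounded there and the stochastic integral $\int_0^{\cdot\wedge T_N}e^{-\delta s}\nabla\Phi(X(s))\cdot\sigma(X(s))\,dw(s)$ is a true martingale of zero mean. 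Splitting $X$ into its continuous part and its jumps and taking expectations, Itô's formula yields
\begin{equation*}
\begin{aligned}
\E_x\big[e^{-\delta T_N}\Phi(X(T_N))\big] = {}& \Phi(x) + \E_x\int_0^{T_N} e^{-\delta s}(\L-\delta)\Phi(X(s))\,ds\\
&- \E_x\int_0^{T_N} e^{-\delta s}\nabla\Phi(X(s))\cdot dY^c(s) + \E_x\int_0^{T_N} e^{-\delta s}\nabla\Phi(X(s))\cdot dZ^c(s)\\
&+ \E_x\sum_{0\le s\le T_N} e^{-\delta s}\big[\Phi(X(s))-\Phi(X(s-))\big].
\end{aligned}
\end{equation*}

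For part (a), fix any admissible $(Y,Z)\in\mathcal{A}_x$ and bound each term using \eqref{e.3.1}: the drift integral is nonpositive because $(\L-\delta)\Phi\le 0$; since $\nabla\Phi\ge f$ and $Y^c$ is nondecreasing, $\nabla\Phi(X(s))\cdot dY^c(s)\ge f(X(s-))\cdot dY^c(s)$; similarly $\nabla\Phi\le g$ gives $\nabla\Phi(X(s))\cdot dZ^c(s)\le g(X(s-))\cdot dZ^c(s)$. For the jumps I would invoke Lemma~\ref{lem} to write $\Phi(X(s))-\Phi(X(s-))=-\Delta Y(s)\cdot\nabla\Phi(\widehat X(s))+\Delta Z(s)\cdot\nabla\Phi(\widetilde X(s))$ and then use $\nabla\Phi\ge f$, $\nabla\Phi\le g$ together with the monotonicity in Assumption~\ref{a:1}(b) to dominate this by $-f(X(s-))\cdot\Delta Y(s)+g(X(s-))\cdot\Delta Z(s)$. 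Substituting these bounds, and discarding the nonnegative left-hand side (where $\Phi\ge 0$ is essential), leaves $\Phi(x)\ge\E_x\big[\int_0^{T_N}e^{-\delta s}f(X(s-))\cdot dY(s)-\int_0^{T_N}e^{-\delta s}g(X(s-))\cdot dZ(s)\big]$. Letting $N\to\infty$ and applying monotone convergence to the harvesting and seeding integrals recovers the performance functional \eqref{e.4}, so $\Phi(x)\ge J(x,Y,Z)$; taking the supremum over $\mathcal{A}_x$ gives $V(x)\le\Phi(x)$.

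For part (b) I would run the same computation along the candidate pair $(\wdt Y,\wdt Z)$ and check that conditions (i)--(v) turn each inequality above into an equality: condition (i) forces the drift integral to vanish since $(\L-\delta)\Phi=0$ on $\mathcal{C}$ and $\wdt X(s)\in\mathcal{C}$ for a.a.\ $s$; conditions (ii) and (iii) make the continuous harvesting and seeding terms match $f\cdot d\wdt Y^c$ and $g\cdot d\wdt Z^c$ exactly; and the jump condition (v) matches the jump of $\Phi$ to the jump of the reward integral. Hence $\Phi(x)=\E_x[e^{-\delta T_N}\Phi(\wdt X(T_N))]+\E_x\big[\int_0^{T_N}e^{-\delta s}f(\wdt X(s-))\cdot d\wdt Y(s)-\int_0^{T_N}e^{-\delta s}g(\wdt X(s-))\cdot d\wdt Z(s)\big]$, and the transversality condition (iv), $\lim_{N\to\infty}\E_x[e^{-\delta T_N}\Phi(\wdt X(T_N))]=0$, lets me pass to the limit to get $\Phi(x)=J(x,\wdt Y,\wdt Z)$. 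Combined with part (a), this yields $\Phi(x)=J(x,\wdt Y,\wdt Z)\le V(x)\le\Phi(x)$, so $V(x)=\Phi(x)$ and $(\wdt Y,\wdt Z)$ is optimal.

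The main obstacle I anticipate is the rigorous handling of the jump terms with density-dependent prices: Lemma~\ref{lem} produces intermediate evaluation points $\widehat X(s),\widetilde X(s)$, and one must verify that the monotonicity in Assumption~\ref{a:1}(b) points in the direction needed to replace $\nabla\Phi$ at these points by the prices evaluated at the pre-jump state $X(s-)$, so that the discrete reward in \eqref{e.4} is correctly dominated. A secondary difficulty is justifying the limits (first $N\to\infty$, then the passage to the infinite-horizon functional) when $\Phi$, $b$, and $\sigma$ may be unbounded; nonnegativity of $\Phi$ is enough for the inequality in part (a), whereas the sharper transversality condition (iv) is precisely what closes the gap to equality in part (b).
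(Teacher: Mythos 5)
Your proposal follows essentially the same route as the paper's own proof: the localized Dynkin formula with the stopping times $T_N$, dropping the drift term via \eqref{e.3.1}, bounding the continuous control terms by $f\cdot dY^c$ and $g\cdot dZ^c$, handling the jumps through Lemma \ref{lem} together with the monotonicity of the prices, using $\Phi\ge 0$ and $N\to\infty$ to get part (a), and letting conditions (i)--(v) turn every inequality into an equality, with the transversality condition closing part (b). The only cosmetic difference is that the paper invokes the bounded convergence theorem where you use monotone convergence in the passage $N\to\infty$, which does not change the substance of the argument.
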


\begin{proof}
	
	(a) Fix some $x \in S$ and $(Y, Z)\in \mathcal{A}_{x}$, and let $X$ denote the corresponding harvested process.  Choose $N$ sufficiently large so that $|x|<N$. For
	$$
	\beta_N=\inf\{t\ge 0:   |X(t)|\ge N\}, \quad T_N = N\wedge \beta_N,
	$$
	we have \beq{e.3.2}
	\beta_N\to \infty\quad \text{and}\quad T_N \to \infty \quad  \text{almost surely as } \quad N \to\infty.
	\eeq
	Then Dynkin's formula leads to
	\bea
	\E_{x} \ad \big[e^{-\delta T_N}\Phi\(X(T_N)\)\big]-\Phi(x)\\
	\ad = \E_{x}\int_{0}^{T_N} e^{-\delta s}(\L-\delta)\Phi\(X(s)\)ds-\E_{x}\int_{0}^{T_N} e^{-\delta s}\nabla \Phi\(X(s)\)\cdot dY^c(s)\\
	\ad\quad +\E_{x}\int_{0}^{T_N} e^{-\delta s}\nabla \Phi\(X(s)\)\cdot dZ^c(s)\\ \ad \quad + \E_{x}\sum\limits_{0\le s\le T_N}e^{-\delta s}\Big[\Phi\(X(s)\)-\Phi\(X(s-)\)\Big].
	\eea
	It follows from \eqref{e.3.1} that
	\beq{e.3.3}\barray
	\E_{x} \ad \big[e^{-\delta T_N}\Phi\(X(T_N)\)\big]-\Phi(x)\\
	\ad \le -\E_{x}\int_{0}^{T_N} e^{-\delta s}\nabla \Phi\(X(s)\)\cdot dY^c(s) + \E_{x}\int_{0}^{T_N} e^{-\delta s}\nabla \Phi\(X(s)\)\cdot dZ^c(s)\\
	\ad\quad  + \E_{x}\sum\limits_{0\le s\le T_N}e^{-\delta s}\Delta \Phi\(X(s)\),
	\earray
	\eeq
	where $\Delta \Phi\(X(s)\)=\Phi\(X(s)\)-\Phi\(X(s-)\)$.
	By virtue of Lemma \ref{lem}, the monotonicity of
	$f(\cdot)$, $g(\cdot)$, and \eqref{e.3.1},
	we obtain
	\beq{e.3.4}\Delta \Phi(X(s))\le
	-f\(X(s-)\)\cdot \Delta Y(s)) +g(X(s-))\cdot \Delta Z(s).\eeq
	Since $\Phi\cd$ is nonnegative, it follows from \eqref{e.3.3} and \eqref{e.3.4} that
	\bea
	\Phi(x) \ad \ge \bigg[ \E_{x}\int_{0}^{T_N} e^{-\delta s}f(X(s-))\cdot dY^c(s)
	+ \E_{x}\sum\limits_{0\le s\le T_N}e^{-\delta s}f(X(s-))\cdot \Delta Y(s)\bigg] \\
	\ad \quad  - \bigg[ \E_{x}\int_{0}^{T_N} e^{-\delta s}g(X(s-))\cdot dZ^c(s)
	+ \E_{x}\sum\limits_{0\le s\le T_N}e^{-\delta s}g(X(s-))\cdot \Delta Z(s)\bigg] \\
	\ad = \E_{x}\int_{0}^{T_N} e^{- \delta s}f(X(s-))\cdot dY(s)-\E_{x}\int_{0}^{T_N} e^{- \delta s}g(X(s-))\cdot dZ(s).
	\eea
	Letting $N\to\infty$, it follows from \eqref{e.3.2} and the bounded convergence theorem that
	$$\Phi(x)\ge \E_{x}\bigg[\int_0^\infty e^{-\delta s}f(X(s-))\cdot dY(s)-\int_0^\infty e^{-\delta s}g(X(s-))\cdot dZ(s)\bigg].$$
	Taking supremum over all $(Y, Z)\in \mathcal{A}_{x}$, we obtain $\Phi(x)\ge V(x)$.
	
	(b) 	Let (i)-(v) be satisfied. 
	Then Dynkin's formula leads to
	\bea
	\E_{x} \ad \big[e^{-\delta T_N}\Phi(\wdt{X}(T_N))\big]-\Phi(x)\\
	\ad = \E_{x}\int_{0}^{T_N} e^{-\delta s}(\L-\delta)\Phi(\wdt{X}(s))ds-\E_{x}\int_{0}^{T_N} e^{-\delta s}\nabla \Phi(\wdt{X}(s))\cdot d\wdt{Y}^c(s)\\
	\ad\quad +\E_{x}\int_{0}^{T_N} e^{-\delta s}\nabla \Phi(\wdt{X}(s))\cdot d\wdt{Z}^c(s) + \E_{x}\sum\limits_{0\le s\le T_N}e^{-\delta s}\Big[\Phi(\wdt{X}(s))-\Phi(\wdt{X}(s-))\Big].
	\eea
	By (i), $(\L-\delta)\Phi(\wdt{X}(s))$ for almost all $s\ge 0$. This, together with (ii) and (iv), implies that
	\bea
	\Phi(x)\ad=\E_{x} \big[e^{-\delta T_N}\Phi\big(\wdt{X}(T_N)\big)\big]+\E_{x}\int_{0}^{T_N} e^{- \delta s}f(\wdt{X}(s-))\cdot d\wdt{Y}(s)\\
	\ad \qquad -\E_{x}\int_{0}^{T_N} e^{- \delta s}g(\wdt{X}(s-))\cdot d\wdt{Z}(s).\eea
	Letting $N\to \infty$ and using (iv), we obtain
	$$
	\Phi(x)=\E_{x}\int_{0}^{T_N} e^{- \delta s}f(\wdt{X}(s-))\cdot d\wdt{Y}(s) -\E_{x}\int_{0}^{T_N} e^{- \delta s}g(\wdt{X}(s-))\cdot d\wdt{Z}(s).$$
	This, together with (a), implies that $\Phi(x)=V(x)$ for any $x\in S$ and $(\wdt{Y}, \wdt{Z})$ is an optimal harvesting strategy.	
\end{proof}

Next, we establish the continuity of the value function.

\begin{prop} \label{prop:2} Let Assumption \ref{a:1} be satisfied. Then the following assertions hold.
	\begin{itemize}
	\item[\rm (a)] For any $x, y\in \Sb $,
	\beq{e.3.5}V(y)\le V(x) - f(x)\cdot (x-y)^++g(x)\cdot (x-y)^-.\eeq
	
	\item[\rm (b)] If $V(0)<\infty$, then $V(x)<\infty$ for any $x\in \Sb $ and $V$ is Lipschitz continuous.
	\end{itemize}
\end{prop}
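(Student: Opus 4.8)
The plan is to derive both assertions from a single \emph{relocation} argument. For part (a) I would compare the problem started at $x$ with the one started at $y$ by exhibiting, from the state $x$, an admissible control that instantaneously moves the state to $y$ and thereafter behaves near-optimally; part (b) will then follow from (a) by specialization and a two-sided estimate.

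Concretely, for part (a) fix $x,y\in\Sb$ and $\e>0$, and pick $(\wdt Y,\wdt Z)\in\mathcal{A}_{y}$ with $J(y,\wdt Y,\wdt Z)\ge V(y)-\e$. I would build a control $(Y,Z)\in\mathcal{A}_{x}$ that performs the instantaneous jump $\Delta Y(0)=(x-y)^+$, $\Delta Z(0)=(x-y)^-$ at time $0$ and then follows $(\wdt Y,\wdt Z)$. Since $\Delta X(0)=\Delta Z(0)-\Delta Y(0)=y-x$, the post-jump state is $X(0)=y$, and from time $0$ onward the controlled process agrees in law with the one started at $y$. Because the jump at time $0$ is priced at $X(0-)=x$ and carries no discount, its contribution to the performance functional is exactly $f(x)\cdot(x-y)^+-g(x)\cdot(x-y)^-$, while the remainder equals $J(y,\wdt Y,\wdt Z)$ (here I take $(\wdt Y,\wdt Z)$ to have no atom at $t=0$, which is justified below). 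Hence
$$V(x)\ge J(x,Y,Z)=f(x)\cdot(x-y)^+-g(x)\cdot(x-y)^-+J(y,\wdt Y,\wdt Z)\ge f(x)\cdot(x-y)^+-g(x)\cdot(x-y)^-+V(y)-\e,$$
and letting $\e\downarrow0$ and rearranging yields \eqref{e.3.5}.

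For part (b), finiteness is immediate from (a) with $x=0$: since $y\ge0$ gives $(0-y)^+=0$ and $(0-y)^-=y$, inequality \eqref{e.3.5} becomes $V(y)\le V(0)+g(0)\cdot y$, which is finite once $V(0)<\infty$. For the Lipschitz bound I would use (a) in both directions. Discarding the nonpositive term $-f(x)\cdot(x-y)^+$ gives $V(y)-V(x)\le g(x)\cdot(x-y)^-$, and interchanging $x$ and $y$ (so that $(y-x)^-=(x-y)^+$) gives $V(x)-V(y)\le g(y)\cdot(x-y)^+$. Since each $g_i$ is non-increasing and $x,y\ge0$ force $g_i(x),g_i(y)\le g_i(0)$, combining the two estimates yields
$$|V(x)-V(y)|\le \sum_{i=1}^d g_i(0)\,|x_i-y_i|,$$
so $V$ is globally Lipschitz with constant $\max_i g_i(0)<\infty$.

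The main obstacle lies entirely in making the splicing in part (a) rigorous: one must verify that the concatenated control $(Y,Z)$ is admissible (right-continuous, nondecreasing, adapted, and keeping the state in $\Sb$) and that the restart identity holds \emph{exactly}. The delicate point, absent in the constant-price setting of \cite{Lungu01}, is the atom of $(\wdt Y,\wdt Z)$ at time $0$: in the spliced control every jump at time $0$ is priced at $x$, whereas the continuation should price its own initial jump at $y$. I would resolve this by arranging the continuation to have no atom at $t=0$ up to an $\e$-error, approximating any initial jump of $(\wdt Y,\wdt Z)$ by rapid continuous harvesting or seeding and invoking the monotonicity of $f$ and $g$ to control the resulting price discrepancy; after this reduction the strong Markov property delivers the restart identity and the computation above goes through.
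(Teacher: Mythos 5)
Your proposal is, at its core, the same proof as the paper's: part (a) is the identical splice/relocation construction (add the instantaneous jump $(x-y)^+$, $(x-y)^-$ at time $0$ to a near-optimal strategy from $y$, observe the two controlled processes coincide for $t>0$), and part (b) follows exactly as in the paper by applying (a) in both directions together with the monotonicity bounds $f_i(x)\le f_i(0)$, $g_i(x)\le g_i(0)$; your Lipschitz constant $\max_i g_i(0)$ is in fact slightly sharper than the paper's $2|f(0)+g(0)|$.

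The one genuine difference is your explicit treatment of the time-$0$ atom of the continuation strategy, and there you have identified a real gap in the paper's own argument rather than created one. The paper asserts the restart identity $J(x,\widetilde Y,\widetilde Z)=f(x)\cdot(x-y)^+-g(x)\cdot(x-y)^-+J(y,Y,Z)$ for an \emph{arbitrary} $(Y,Z)\in\mathcal{A}_y$; but when $(Y,Z)$ has an atom at $t=0$, the two sides differ by $D=[f(x)-f(y)]\cdot\Delta Y(0)-[g(x)-g(y)]\cdot\Delta Z(0)$, because the spliced control prices the entire time-$0$ jump at $x$ while $J(y,Y,Z)$ prices its own initial jump at $y$. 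Since $f,g$ are merely non-increasing, $D$ has no definite sign (take $d=1$, $x>y$, $f$ strictly decreasing, $\Delta Y(0)>0=\Delta Z(0)$: then $D<0$), so for such $(Y,Z)$ the splice alone does not yield $V(x)\ge f(x)\cdot(x-y)^+-g(x)\cdot(x-y)^-+J(y,Y,Z)$, and a reduction to continuations without an atom at $0$, as you propose, is genuinely needed. Do note, however, that your sketch leaves the hardest step undone: after spreading the initial jump over $[0,\eta]$, the state at time $\eta$ is only \emph{approximately} the intended post-jump state (the diffusion moves during the spread), so the phrase ``the strong Markov property delivers the restart identity'' conceals the need for either a small correcting jump at time $\eta$ together with a time-shift of the driving Brownian motion, or some continuity-in-the-initial-condition argument, before the continuation can be re-attached; monotonicity of $f$ and $g$ controls the price discrepancy but not this coupling. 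Modulo that (fixable) technical point, your argument is correct and, on the atom issue, more careful than the published proof.
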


\begin{proof}\
	
(a) Fix  $(Y, Z)\in \mathcal{A}_{y}$. Define
$$\widetilde Y(t)=Y(t)+(x-y)^+, \quad \widetilde Z(t)=Z(t) + (x-y)^-, \quad t\ge 0.$$
Then $(\widetilde{Y},  \widetilde{Z})\in \mathcal{A}_{x}$. Let $\widehat{X}$ denote the process satisfying \eqref{e.2} with  $\widehat{X}(0-)=x$ and strategy $(Y, Z)$. Let $\widetilde{X}$ denote the process satisfying \eqref{e.2} with $\widetilde{X}(0-)=y$ and strategy $(\widetilde{Y}, \widetilde{Z})$. Then we have $\widehat{X}(t)=\widetilde{X}(t)$ for any $t> 0$. Consequently, it follows that
$$J(x, \widetilde{Y}, \widetilde{Z})=f(x)\cdot (x-y)^+-g(x)\cdot (x-y)^- + J(y, Y, Z).$$
 	Since $V(x)\ge J(x, \widetilde{Y}, \widetilde{Z})$, we have
 	$$V(x)\ge f(x)\cdot (x-y)^+-g(x)\cdot (x-y)^- + J(y, Y, Z),$$
	from which, \eqref{e.3.5}  follows by taking supremum over $(Y, Z)\in \mathcal{A}_{y}$.
	
	(b) Similar to \eqref{e.3.5}, we have
	\beq{e.3.6}V(x)\le V(y) - f(y)\cdot (y-x)^++g(y)\cdot (y-x)^-.\eeq
	In view of \eqref{e.3.5} and \eqref{e.3.6}, if  $V(0)<\infty$, then $V(x)<\infty$ for any $x\in \Sb $. Moreover,
	\bea|V(x)-V(y)|\ad \le |f(x) + f(y) + g(x)+g(y)| |x-y|\\
	\ad \le 2|f(0)+g(0)| |x-y|, \quad x, y\in S.\eea
	Thus, $V(\cdot)$ is Lipschitz continuous.
\end{proof}

Using Proposition \ref{prop:a2}, we proceed to present an easily verifiable
condition for the finiteness of the value function.

\begin{prop}
	\label{prop:3} Let Assumption \ref{a:1} be satisfied.  Moreover, suppose that there is a positive constant $C$ such that \begin{equation}\label{cond-bi}b_i(x)\le \delta x_i+C, \quad x\in\Sb , \quad i=1, \dots, d.\end{equation}
	Then there exists a positive constant $M$ such that
	$$V(x)\le \sum\limits_{i=1}^d f_i(0) x_i+ M, \quad x\in\Sb .$$
\end{prop}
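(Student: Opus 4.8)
The plan is to exhibit an explicit smooth supersolution of the quasi-variational inequality \eqref{e.3.1} and then invoke the comparison part of the verification theorem, \thmref{prop:a2}(a). Concretely, I would take the affine candidate
$$\Phi(x) = \sum_{i=1}^d f_i(0)\, x_i + M, \qquad x\in\Sb,$$
where $M>0$ is a constant to be fixed below. This $\Phi$ is manifestly of class $C^2(\Sb)$ and nonnegative (since $f_i(0)>0$ and $x\ge 0$ on $\Sb$), so the whole task reduces to checking that it satisfies the three inequalities bundled into the supremum in \eqref{e.3.1}. Once that is done, \thmref{prop:a2}(a) delivers $V(x)\le\Phi(x)$ immediately.

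First I would record the elementary facts that, because $\Phi$ is affine, $\partial\Phi/\partial x_i\equiv f_i(0)$ and $\nabla^2\Phi\equiv 0$, whence $\L\Phi(x)=\sum_{i=1}^d b_i(x) f_i(0)$. The two ``obstacle'' inequalities are then direct consequences of Assumption \ref{a:1}(b). Since each $f_i$ is non-increasing and every $x\in\Sb$ satisfies $x\ge 0$, we get $f_i(x)\le f_i(0)=\partial\Phi/\partial x_i(x)$, which gives $f_i(x)-\partial\Phi/\partial x_i(x)\le 0$; and since $f_i(\cdot)<g_i(\cdot)$ pointwise, we get $\partial\Phi/\partial x_i(x)=f_i(0)<g_i(x)$, which gives $\partial\Phi/\partial x_i(x)-g_i(x)\le 0$. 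Here one only has to be careful to apply the monotonicity of $f_i$ in the correct direction on $\Sb=[0,\infty)^d$.

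The single step with any real content is the inequality $(\L-\delta)\Phi(x)\le 0$, and this is exactly where the growth hypothesis \eqref{cond-bi} and the choice of $M$ come in. Using $b_i(x)\le\delta x_i+C$ together with $f_i(0)>0$, I would estimate
$$(\L-\delta)\Phi(x)=\sum_{i=1}^d b_i(x) f_i(0)-\delta\Big(\sum_{i=1}^d f_i(0) x_i + M\Big)\le \sum_{i=1}^d(\delta x_i+C) f_i(0)-\delta\sum_{i=1}^d f_i(0) x_i-\delta M.$$
The terms linear in $x$ cancel, leaving $C\sum_{i=1}^d f_i(0)-\delta M$; choosing $M:=\delta^{-1}C\sum_{i=1}^d f_i(0)$ (or any larger value) makes the right-hand side nonpositive uniformly in $x\in\Sb$, and automatically guarantees $M>0$. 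The cancellation of the linear terms is precisely the effect that the affine structure of $\Phi$ and the linear bound \eqref{cond-bi} are engineered to produce, so I do not anticipate a genuine obstacle.

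With all three inequalities verified, the supremum in \eqref{e.3.1} is $\le 0$, so $\Phi$ qualifies as the function in \thmref{prop:a2}, and part (a) yields $V(x)\le\Phi(x)=\sum_{i=1}^d f_i(0) x_i + M$ for every $x\in S$. Since the Dynkin/comparison argument in the proof of \thmref{prop:a2}(a) applies verbatim to starting points on the boundary (alternatively, one extends the bound from $S$ to $\Sb$ by continuity of $V$), the estimate holds for all $x\in\Sb$, which is the assertion of the proposition.
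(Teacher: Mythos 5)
Your proposal is correct and follows exactly the paper's own route: the paper defines the same affine candidate $\Phi(x)=\sum_{i=1}^d f_i(0)x_i+M$, asserts that it solves \eqref{e.3.1} for $M$ sufficiently large, and invokes Theorem~\ref{prop:a2}(a), omitting the verification details "for brevity." The computations you supply (the obstacle inequalities from monotonicity of $f_i$ and $f_i<g_i$, and the cancellation giving $M\ge \delta^{-1}C\sum_i f_i(0)$) are precisely the omitted details, and they are carried out correctly.
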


\begin{proof}
	Define
	$$\Phi(x)=\sum\limits_{i=1}^d f_i(0) x_i + M, \quad x\in\Sb ,$$
	where $M$ is a positive number to be specified.
We can check that $\Phi\cd$ solves the system of inequalities \eqref{e.3.1}  for sufficiently large $M.$
By virtue of Proposition \ref{prop:a2}, $V(x)\le \Phi(x)$ for any $x \in S.$ The details are omitted for brevity.
\end{proof}

Throughout the rest of this section,
we aim to characterize the value function as a viscosity solution of an associated system of quasi-variational inequalities.
Our approach is motivated by \cite{Taksar, Zhu11}. However, the results and proofs below are nontrivial extensions because we have interacting species as well as seeding.
We use the following notation and definitions. For a point $x^0\in S$ and a strategy $(Y, Z)\in \mathcal{A}_{x^0}$,
let ${X}$ be the corresponding process with harvesting and seeding. Let $B_\e(x^0)=\{x\in S: |x-x^0|<\e \}$, where $\e>0$ is sufficiently small so that
$\overline{B_\e(x_0)}\subset S$.
Let $\theta=\inf\{t\ge 0: {X}(t)\notin B_\e(x^0) \}$. For a constant $r>0$, we  define $\theta_r=\theta\wedge r$.

\begin{prop}
Let Assumption \ref{a:1} be satisfied and suppose that $V(x)<\infty$ for all $x\in \Sb $.
The value function
$V$ is a viscosity subsolution of the system of quasi-variational inequalities
\beq{e.3.17}
\max\limits_{i} \bigg\{(\L-\delta)\Phi(x), f_i(x)-\dfrac{\partial \Phi}{\partial x_i}(x),  \dfrac{\partial \Phi}{\partial x_i}(x)-g_i(x)\bigg\}=0, \quad x\in S.
\eeq
That is,
	for
	any $x^0\in S$ and any function $\phi\in C^2(S)$ satisfying
	$$(V-\phi)(x)\ge (V-\phi)(x^0)=0,$$
	for all $x$ in a neighborhood of $x^0$, we have
	\beq{e.3.18}\max\limits_{i} \bigg\{(\L-\delta)\phi(x^0), f_i(x^0)-\dfrac{\partial \phi}{\partial x_i}(x^0),  \dfrac{\partial \phi}{\partial x_i}(x^0)-g_i(x^0)\bigg\}\le 0.\eeq
\end{prop}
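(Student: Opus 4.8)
The plan is to unpack the maximum in \eqref{e.3.18} into three families of scalar inequalities and verify each separately: for every $i$ one must show $f_i(x^0)-\partial_{x_i}\phi(x^0)\le 0$ and $\partial_{x_i}\phi(x^0)-g_i(x^0)\le 0$, and in addition $(\L-\delta)\phi(x^0)\le 0$. The two gradient inequalities come essentially for free from the estimate \eqref{e.3.5} of Proposition~\ref{prop:2}. Taking $x=x^0$ and $y=x^0-\eta\ei$ there (with $\eta>0$ small) gives $V(x^0-\eta\ei)\le V(x^0)-\eta f_i(x^0)$, while $y=x^0+\eta\ei$ gives $V(x^0+\eta\ei)\le V(x^0)+\eta g_i(x^0)$. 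Feeding in the touching hypotheses $V(x^0)=\phi(x^0)$ and $V\ge\phi$ near $x^0$, and rearranging, I obtain
$$\frac{\phi(x^0)-\phi(x^0-\eta\ei)}{\eta}\ge f_i(x^0),\qquad \frac{\phi(x^0+\eta\ei)-\phi(x^0)}{\eta}\le g_i(x^0).$$
Letting $\eta\downarrow 0$ and using $\phi\in C^2(S)$ delivers the two gradient bounds.

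For the remaining inequality I would exploit the suboptimal strategy of never intervening. Let $X$ be the uncontrolled diffusion \eqref{e.1} started at $x^0$, and recall $\theta_r=\theta\wedge r$ with $\theta$ the first exit time of $B_\e(x^0)$. The one direction of the dynamic programming principle that only uses admissible (hence possibly suboptimal) controls yields
$$V(x^0)\ge \E_{x^0}\big[e^{-\delta\theta_r}V(X(\theta_r))\big].$$
Choosing $\e$ small enough that $\overline{B_\e(x^0)}$ sits inside the neighborhood on which $V\ge\phi$, and using $V(x^0)=\phi(x^0)$, this becomes $\phi(x^0)\ge \E_{x^0}[e^{-\delta\theta_r}\phi(X(\theta_r))]$. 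Since no jumps occur under the no-intervention strategy, Dynkin's formula rewrites the right-hand side as $\phi(x^0)+\E_{x^0}\int_0^{\theta_r}e^{-\delta s}(\L-\delta)\phi(X(s))\,ds$, so that
$$\E_{x^0}\int_0^{\theta_r}e^{-\delta s}(\L-\delta)\phi(X(s))\,ds\le 0.$$
Dividing by $r$ and letting $r\downarrow 0$ then forces $(\L-\delta)\phi(x^0)\le 0$; here I would use that $\theta>0$ almost surely, that $s\mapsto (\L-\delta)\phi(X(s))$ is continuous at $s=0$, and that the integrand is bounded on the compact set $\overline{B_\e(x^0)}$, so that bounded convergence together with the fundamental theorem of calculus identifies the limit as $(\L-\delta)\phi(x^0)$.

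The step that requires the real care, and which I expect to be the main obstacle, is justifying the lower bound $V(x^0)\ge \E_{x^0}[e^{-\delta\theta_r}V(X(\theta_r))]$. This is the ``easy'' half of the dynamic programming principle in spirit, but making it rigorous involves a concatenation argument: for each $\epsilon'>0$ one selects, measurably in the endpoint $z=X(\theta_r)$, an $\epsilon'$-optimal continuation $(Y^z,Z^z)\in\mathcal{A}_z$, splices it onto the no-intervention control at the stopping time $\theta_r$ via the strong Markov property, and then lets $\epsilon'\downarrow 0$. The measurable selection and splicing are standard in this literature (compare the approach of \cite{Taksar,Zhu11}), but they are where the technical bookkeeping lives; everything else reduces to Proposition~\ref{prop:2}, Dynkin's formula, and a routine small-time expansion.
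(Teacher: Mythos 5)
Your proposal is correct, but it routes two of the three inequalities differently than the paper does, in a way worth comparing. The paper derives all three bounds from a single dynamic programming inequality (its \eqref{e.3.19}), fed with three families of admissible strategies: do nothing, an instantaneous harvest $Y_i(0)=\eta$, and an instantaneous seeding $Z_i(0)=\eta$; Dynkin's formula then yields \eqref{e.3.21}, the gradient bounds follow by sending $r\to 0$ and then $\eta\to 0$, and the drift bound follows by contradiction (if $(\L-\delta)\phi(x^0)>0$, it stays positive on a small ball, contradicting \eqref{e.3.21a}). You instead obtain the two gradient bounds directly from the comparison estimate \eqref{e.3.5} of Proposition~\ref{prop:2}(a), which the paper has already proven by precisely the jump-strategy construction; your difference quotients and signs are right, and $\phi\in C^2(S)$ justifies the limits, so no dynamic programming is needed for those two inequalities at all. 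For the drift bound you run the same no-intervention argument as the paper's $\eta=0$ case, replacing its contradiction step with a direct limit (divide by $r$, use $\theta>0$ a.s., boundedness of $(\L-\delta)\phi$ on $\overline{B_\e(x^0)}$, and bounded convergence); both versions are sound. What your route buys is modularity: the genuinely delicate step you flag---the ``$\ge$'' half of the dynamic programming principle, which needs measurable selection and concatenation of $\epsilon'$-optimal continuations---is invoked only once, for the drift inequality, rather than underlying all three bounds. Note also that the paper itself invokes this dynamic programming inequality without proof, so your reliance on it leaves you at exactly the same level of rigor as the paper's own argument.
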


\begin{proof}
For $x^0\in S$, consider
a $C^2$ function $\phi(\cdot)$ satisfying $\phi(x^0)=V(x^0)$ and
$\phi(x)\le V(x)$ for all $x$ in a neighborhood of $x^0$.
	Let  $\e>0$ be sufficiently small so that
 $\overline{B_\e(x_0)}\subset S$ and
 $\phi(x)\le V(x)$ for all $x\in \overline{B_\e(x_0)}$, where $\overline{B_\e(x_0)}=\{x\in S: |x-x^0|\le \e \}$ is the closure of $B_\e(x^0)$.

 Choose $(Y, Z)\in \mathcal{A}_{x^0}$ such that $Y(0-)=Z(0-)=0$, $Y(t)=Y(0)$ and $Z(t)=Z(0)$ for any $t\ge 0$, $|\Delta Y(0)|+|\Delta Z(0)|\le \eta$, where $\eta\in [0, \e)$. Thus, there are only jumps at time $t=0$.
	Let ${X}$ be the corresponding harvested process with initial condition $x^0$ and strategy $(Y, Z)$.

	Note that the chosen strategy $(Y, Z)$ guarantees that ${X}$ has at most one jump at $t=0$ and remains continuous on $(0, \infty)$. This, together with the fact that $\eta\in [0, \e)$, implies that
	 ${X}(t)\in \overline{B_\e(x^0)}$ for all $0\le t\le  \theta$. By virtue of the dynamic programming principle, we have
	\beq{e.3.19}
	\barray
	\phi(x^0)\ad = V(x^0)\\
	\ad \ge \E\bigg[ \int_0^{\theta_r} e^{-\delta s}f\({X}(s-)\)\cdot dY(s) - \int_0^{\theta_r} e^{-\delta s}g\({X}(s-)\)\cdot dZ(s) + e^{-\delta \theta_r}   V({X}(\theta_r))\bigg]\\
	\ad \ge \E\bigg[ \int_0^{\theta_r} e^{-\delta s}f\({X}(s-)\)\cdot dY(s) - \int_0^{\theta_r} e^{-\delta s}g\({X}(s-)\)\cdot dZ(s)  + e^{-\delta\theta_r}   \phi({X}\theta_r))\bigg].
	\earray
	\eeq
	By the Dynkin formula, we obtain
	\beq{e.3.20}
	\barray
	\phi(x^0)\ad = \E e^{-\delta \theta_r} \phi ({X}(\theta_r)) - \E \int_0^{\theta_r} e^{-\delta s} (\L -\delta ) \phi ({X}(s))ds\\
	\ad \qquad + \E \int_0^{\theta_r} e^{-\delta s}  \nabla \phi ({X}(s)) \cdot dY^c (s) -  \E\int_0^{\theta_r} e^{-\delta s}  \nabla \phi ({X}(s)) \cdot dZ^c (s) \\
	\ad \qquad - \E \sum\limits_{0\le s\le \theta_r} e^{-\delta s} \Big[ \phi({X}(s) )- \phi({X}(s-)) \Big].
	\earray
	\eeq
	A combination of \eqref{e.3.19} and \eqref{e.3.20} leads to
	\beq{e.3.21}
	\barray
	0 \ad \ge \E  \int_0^{\theta_r} e^{-\delta s}f\({X}(s-)\)\cdot dY(s) - \E  \int_0^{\theta_r} e^{-\delta s}g\({X}(s-)\)\cdot dZ(s)\\

	\ad 	+ \E \int_0^{\theta_r} e^{-\delta s} (\L -\delta ) \phi ({X}(s))ds\\
	\ad \qquad - \E \int_0^{\theta_r} e^{-\delta s}  \nabla ({X}(s)) \cdot dY^c (s) + \E\int_0^{\theta_r} e^{-\delta s}  \nabla ({X}(s)) \cdot dZ^c (s) \\
	\ad \qquad + \E \sum\limits_{0\le s\le \theta_r} e^{-\delta s} \Big[ \phi\({X}(s)\)- \phi\({X}(s-)\) \Big].
	\earray
	\eeq
First, we take $\eta=0$; that is, $Y(t)=Z(t)=0$ for any $t\ge 0$.
	Then
	$\theta>0$
almost surely
 (a.s.) and
	\eqref{e.3.21} can be rewritten as
	\beq{e.3.21a}
	\barray
	0\ad \ge \E \int_0^{\theta_r} e^{-\delta s} (\L -\delta ) \phi ({X}(s))ds.
	\earray
	\eeq
	We suppose that $(\L-\delta)\phi (x^0)>0$. Then we can choose a sufficiently small $\e>0$ such that $(\L-\delta)\phi (x)>0$ for any $x\in B_\e(x^0)$. As a result, $(\L-\delta)\phi (X(s))>0$ for any $t\in [0, \theta)$. It follows that $\int_0^{\theta_r} e^{-\delta s} (\L -\delta ) \phi ({X}(s))ds>0$ and therefore,
	$$\E \int_0^{\theta_r} e^{-\delta s} (\L -\delta ) \phi ({X}(s))ds>0,$$
	which contradicts \eqref{e.3.21a}. This proves that
	\beq{e.3.22}
	(\L-\delta)\phi(x^0)\le 0.
	\eeq
	Next, we fix $i\in \{1, \dots, d\}$ and take $\eta\in (0, \e)$, $Y_i(t)=Y_i(0)=\eta$ for all $t\ge 0$, and $Y_j(t)=Z_i(t)=Z_j(t)=0$ for all $j\ne i$ and $t\ge 0$.
	Then \eqref{e.3.21} reduces to
	$$\E \int_0^{\theta_r} e^{-\delta s} (\L -\delta ) \phi ({X}(s))ds + f_i(x^0)\eta + \phi(x^0- \eta {\bf e}_i)-\phi(x^0)\le 0.$$
	Now sending $r\to 0$, we have
	$$f_i(x^0)\eta + \phi(x^0- \eta {\bf e}_i)-\phi(x^0)\le 0.$$
	By dividing the above inequality by $\eta$ and letting $\eta\to 0$, we obtain
	\beq{e.3.23}
	f_i(x^0)-\partial \phi/\partial x_i (x^0)\le 0.
	\eeq
	Now we take $\eta\in (0, \e)$, $Z_i(t)=\eta$ for all $t\ge 0$, and $Z_j(t)=Y_i(t)=Y_j(t)=0$ for all $j\ne i$ and $t\ge 0$.
	Then \eqref{e.3.21} reduces to
	$$\E \int_0^{\theta_r} e^{-\delta s} (\L -\delta ) \phi ({X}(s))ds - g_i(x^0)\eta + \phi(x^0+ \eta {\bf e}_i)-\phi(x^0)\le 0.$$
	Now sending $r\to 0$, we have
	$$-g_i(x^0)\eta + \phi(x^0- \eta {\bf e}_i)-\phi(x^0)\le 0.$$
	Finally, dividing the above inequality by $\eta$ and letting $\eta\to 0$, we arrive at
	\beq{e.3.24}
	\partial \ph/\partial x_i(x^0)-g_i(x^0)\le 0.
	\eeq
	Now \eqref{e.3.18} follows by combining \eqref{e.3.22}, \eqref{e.3.23}, and \eqref{e.3.24}.
\end{proof}

\begin{lem}\label{lem:1} Let $\lambda$ be the random variable
defined as follows.
If $X(\theta)=X(\theta-)$ then $\lambda=0$, while if $X(\theta)\notin \overline{B_\e(x^0)}$, then let $\lambda$
be a positive number in $(0,1]$
	such that $${X}(\theta-) +\lambda ({X}(\theta) - {X}(\theta-)  )\in \partial {B_\e(x^0)}.$$ Then
		there is a positive number
		 $\kappa_0>0$ such that
		\beq{e.3.27c}
		\barray
\ad \E\Bigg[ \int_0^{\theta} e^{-\delta s} ds  + \lambda e^{-\delta \theta} \one \cdot\Delta Z(\theta)  + \lambda e^{-\delta \theta} \one \cdot\Delta Y(\theta) \\
\ad \qquad \qquad \qquad    + \int_0^{\theta -} e^{-\delta s}\one \cdot  dY(s) +  \int_0^{\theta -} e^{-\delta s}\one\cdot  dZ(s)\Bigg]\ge \ka_0,
\earray
\eeq
where $\one(x)=1$ for all $x\in\bar S$.
\end{lem}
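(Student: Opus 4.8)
The plan is to show that the non-negative random variable
$$
Q := \int_0^{\theta} e^{-\delta s}\,ds + \lambda e^{-\delta\theta}\one\cdot\Delta Z(\theta) + \lambda e^{-\delta\theta}\one\cdot\Delta Y(\theta) + \int_0^{\theta-}e^{-\delta s}\one\cdot dY(s) + \int_0^{\theta-}e^{-\delta s}\one\cdot dZ(s)
$$
has expectation bounded below by a constant depending only on $x^0$, $\e$, $\delta$ and the sup-norms of $b,\sg$ over $\overline{B_\e(x^0)}$, hence uniform over all strategies in $\mathcal{A}_{x^0}$. The guiding idea is a dichotomy governed by a small deterministic time $t_0>0$ fixed at the end. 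Either the process lingers in the ball up to time $t_0$, in which case the time integral alone gives $\int_0^\theta e^{-\delta s}\,ds\ge (1-e^{-\delta t_0})/\delta=:c_1>0$; or it exits before $t_0$, in which case I will argue that to leave a ball of radius $\e$ it must have expended a comparable displacement through diffusion or control, and the control part is exactly what appears in $Q$.

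The key geometric step is a displacement inequality. Since $X(s)\in B_\e(x^0)$ for $s<\theta$, the coefficients are bounded there, say $|b|\le K_b$ and $|\sg|\le K$ on $\overline{B_\e(x^0)}$. Writing $X(\theta-)-x^0$ from \eqref{e.2}, using $\Delta X(\theta)=\Delta Z(\theta)-\Delta Y(\theta)$, and using that the interpolated point $X(\theta-)+\lambda\big(X(\theta)-X(\theta-)\big)$ lies on $\partial B_\e(x^0)$, the triangle inequality yields
$$
\e \le \Big|\int_0^{\theta-}b(X(s))\,ds\Big| + \Big|\int_0^{\theta-}\sg(X(s))\,dw(s)\Big| + \one\cdot Y(\theta-) + \one\cdot Z(\theta-) + \lambda\one\cdot\Delta Y(\theta) + \lambda\one\cdot\Delta Z(\theta),
$$
where I invoked the bound $|v|\le \one\cdot v$ valid for $v\ge 0$ and the identity $\int_0^{\theta-}\one\cdot dY=\one\cdot Y(\theta-)$. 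On $\{\theta<t_0\}$ the drift term is at most $K_b t_0$, which I make $\le\e/4$ by shrinking $t_0$; moreover $e^{-\delta s}\ge e^{-\delta t_0}$ for $s\le\theta<t_0$, so the last four terms are dominated by $e^{\delta t_0}$ times the control part of $Q$. Consequently, on the event $\{\theta<t_0\}$ where in addition the continuous martingale $N(t):=\int_0^{t}\sg(X(s))\,dw(s)$ satisfies $|N(\theta-)|\le\e/2$, the displacement bound forces the control part of $Q$ to be at least $e^{-\delta t_0}\e/4$.

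It remains to absorb the stochastic integral, which is the main obstacle since it cannot be controlled pathwise. Here I apply Doob's maximal inequality to the stopped martingale $N(\cdot\wedge\theta)$, whose quadratic variation grows at most linearly in time with rate controlled by $K$, to obtain
$$
\P\big(\theta<t_0,\ |N(\theta-)|>\e/2\big) \le \P\Big(\sup_{s\le t_0}|N(s\wedge\theta)|>\e/2\Big) \le \frac{4\,\E|N(t_0\wedge\theta)|^2}{\e^2} \le \frac{C K^2 t_0}{\e^2},
$$
which I drive below $1/2$ by a final shrinking of $t_0$. Partitioning the sample space into $\{\theta\ge t_0\}$, $\{\theta<t_0,\ |N(\theta-)|\le\e/2\}$, and the exceptional event above, the first two are disjoint, their union has probability at least $1/2$, and on each the relevant lower bound is at least $m:=\min\{c_1,\ e^{-\delta t_0}\e/4\}$; hence $\E[Q]\ge m/2=:\kappa_0>0$. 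The delicate point is that the single parameter $t_0$ must be chosen small enough to make both the drift displacement ($\le\e/4$) and the martingale exit probability ($\le 1/2$) small at once, while $c_1$ and $e^{-\delta t_0}\e/4$ stay strictly positive; every constant depends only on $x^0,\e,\delta,K_b,K$ and never on the strategy $(Y,Z)$.
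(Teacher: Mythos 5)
Your proof is correct, and it takes a genuinely different route from the paper's. The paper proves \eqref{e.3.27c} by a Dynkin-formula argument with the explicit test function $\Phi(x)=K_0\big(|x-x^0|^2-\e^2\big)$, $K_0=1/(K+2\e)$, normalized so that $|(\L-\delta)\Phi|<1$ on $B_\e(x^0)$ and so that the gradient terms produced by the controls (continuous parts, jumps, and the interpolated jump at $\theta$) are each dominated by the corresponding term $\one\cdot dY$, $\one\cdot dZ$, $\lambda\,\one\cdot\Delta Y(\theta)$, $\lambda\,\one\cdot\Delta Z(\theta)$ in the expression to be bounded; letting $r\to\infty$, the left-hand side telescopes to $\E e^{-\delta\theta}\Phi(X_\lambda)-\Phi(x^0)=K_0\e^2$, giving the explicit constant $\ka_0=\e^2/(K+2\e)$. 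You instead argue pathwise and in probability: a dichotomy at a deterministic small time $t_0$, the triangle inequality applied to the integrated equation \eqref{e.2} (so that exiting $B_\e(x^0)$ forces a displacement of size $\e$ to be supplied by drift, noise, or controls), and Doob's maximal inequality for the stopped square-integrable martingale $N(\cdot\wedge\theta)$ to show that the noise can supply this displacement before time $t_0$ only with probability at most $1/2$; the resulting constant is $\ka_0=\tfrac12\min\{(1-e^{-\delta t_0})/\delta,\,e^{-\delta t_0}\e/4\}$. Both arguments yield a constant depending only on $x^0,\e,\delta$ and the bounds of $b,\sg$ on $\overline{B_\e(x^0)}$, hence uniform over $\mathcal{A}_{x^0}$, which is exactly what the supersolution argument requires. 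The paper's route buys brevity (the mean-value jump estimates are already set up in Lemma \ref{lem}) and a cleaner constant; your route buys elementarity---no smooth test function or Dynkin formula, only the It\^{o} isometry and a maximal inequality---and makes the mechanism transparent: a quick exit must be paid for through the controls unless an unlikely Brownian excursion occurs. Two points you should state explicitly: (i) the stochastic integral is continuous, so $N(\theta-)=N(\theta)$ and bounding it on $\{\theta<t_0\}$ by the running supremum of the stopped martingale is legitimate; (ii) when $X$ jumps exactly onto the boundary, i.e.\ $X(\theta)\ne X(\theta-)$ but $X(\theta)\in\partial B_\e(x^0)$ (a case the statement's definition of $\lambda$ leaves open), one takes $\lambda=1$; this case is handled the same way in both proofs.
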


\begin{proof} Recall that $\theta_r=\theta\wedge r$ for any positive number $r$. Define
	$${X}_\lambda^r:={X}(\theta_r-) +\lambda ({X}(\theta_r) - {X}(\theta_r-)  )={X}(\theta_r-)-\lambda (\Delta Y(\theta_r )- \Delta Z(\theta_r)).$$ It can be seen that $X_\lambda^r\in \overline{B_\e(x^0)}$ for any $r>0$.
	 We consider the function $\widetilde{\Phi} (x)= |x-x^0|^2-\varepsilon^2$ for $x \in B_\e(x^0)$. It follows that
	 $$(\L-\delta)\widetilde{\Phi}(x)=2(x-x^0)\cdot b(x) +\sum\limits_{i=1}^d \sigma_{ii}^2(x)-\delta(|x-x^0|^2-\varepsilon^2).$$
	 Since $\widetilde{\Phi}\cd, b\cd$, and $\sigma\cd$ are continuous, it is obvious that
	 $$|(\L-\delta )\widetilde{\Phi}(x)|\le K<\infty$$
	 for some positive constant $K$.
	 Note that $K$ depends only on $x^0, \e, \delta$ and bounds on $b\cd, \sg\cd$.
	  Let $K_0=\dfrac{1}{K+2\e}$ and define $\Phi(x)=K_0 \widetilde{\Phi}(x)$ for $x\in B_\e(x^0)$. Then it follows immediately that
	 \beq{e.3.27d}
	 |(\L-\delta)\Phi(x)|<1 \quad \text{for } x\in B_\e(x^0).
	 \eeq
	 Moreover, we have
	 \beq{e.3.27e}
	 \nabla \Phi(x) \cdot \one=2K_0 \cdot (x-x_0)\ge -1.
	 \eeq
	 By virtue of the Dynkin formula, we have
	 \beq{e.3.27f}
	 \barray \ad \E e^{-\delta \theta_r} \Phi({X}(\theta_r-)-\Phi(x^0) \\
	 \ad \ = \E \int_0^{\theta_r -} e^{-\delta s} (\L -\delta ) \Phi ({X}(s))ds - \E \int_0^{\theta_r-} e^{-\delta s}  \nabla \Phi ({X}(s)) \cdot dY^c (s)\\
	 \ad \qquad + \E\int_0^{\theta_r-} e^{-\delta s}  \nabla \Phi ({X}(s)) \cdot dZ^c (s) +  \E \sum\limits_{0\le s< \theta_r} e^{-\delta s} \Big[ \Phi({X}(s) )- \Phi({X}(s-)) \Big]
	. \earray
	 \eeq
	 By virtue of \eqref{e.3.27e}, we have
	 \beq{e.3.27g}
	 \barray
	 \ad 	\Phi({X}(s) )- \Phi({X}(s-))\\
	 \aad\ = ({X}(s) - ({X}(s-) )\cdot \nabla \Phi (P(s))
	 \\
	 \aad\ = (-\Delta Y (s) + \Delta Z(s)  ) \cdot \nabla \Phi (P(s)),\\
	 \aad\ \le \one\cdot (\Delta Y (s) + \Delta Z(s)  )
	 ,\earray
	 \eeq
	 where $P(s)$ is a point on the line connecting $X(s)$ and $X(s-)$.
	 Hence it follows from \eqref{e.3.27d}-\eqref{e.3.27g} that
	 \beq{e.3.27h}
	 \barray \ad \E e^{-\delta \theta_r} \Phi({X}(\theta_r-)))-\Phi(x^0)) \\
	 \ad \quad \le  \E \int_0^{\theta_r} e^{-\delta s}ds + E \int_0^{\theta_r-} e^{-\delta s}  \one\cdot dY^c (s)+\E \int_0^{\theta_r-} e^{-\delta s}  \one\cdot dZ^c (s)\\
	 \ad \qquad  +  \E \sum\limits_{0\le s< \theta_r} e^{-\delta s} \one\cdot\big[ \Delta Y (s) + \Delta Z(s)  \big]\\
	 \ad\quad = \E \int_0^{\theta_r} e^{-\delta s}ds + \E \int_0^{\theta_r-} e^{-\delta s}  \one\cdot dY (s) + \E \int_0^{\theta_r-} e^{-\delta s}  \one \cdot dZ(s)
	 .\earray
	 \eeq
	 We also have
	 \beq{}
	 \barray
	 \ad 	\Phi({X}(\theta_r-) )- \Phi({X}_\lambda^r
	 )\\
	 \aad\ = ({X}(\theta_r-) - {X}_\lambda^r )\cdot \nabla \Phi (P_0)
	 \\
	 \aad\ = \lambda(\Delta Y (\theta_r) - \Delta Z(\theta_r)  ) \cdot \nabla \Phi (P_0)\\
	 \aad\ \ge -\lambda \one\cdot (\Delta Y (\theta_r) + \Delta Z(\theta_r)  ),
	 \earray
	 \eeq
	 where $P_0$ is a point on the line segment connecting  ${X}(\theta_r-)$ and ${X}_\lambda^r$. Thus,
	 \beq{e.3.27i}
	 \E e^{-\delta \theta_r} \Phi({X}_\lambda^r
	 ) - \E e^{-\delta \theta_r}	\Phi({X}(\theta_r-) )\le \lambda \E e^{-\delta \theta_r} \one\cdot (\Delta Y (\theta_r) + \Delta Z(\theta_r)  ).
	 \eeq	
	 Combining \eqref{e.3.27h} and \eqref{e.3.27i}, we have
	 \beq{}\barray \ad \E e^{-\delta \theta_r} \Phi({X}_\lambda^r
	 )-\Phi(x^0)\\
\aad \ \le \E \int_0^{\theta_r} e^{-\delta s}ds + \E \int_0^{\theta_r-} e^{-\delta s}  \one\cdot dY (s) + \E \int_0^{\theta_r-} e^{-\delta s}  \one\cdot dZ(s) \\
	 \aad \qquad + \lambda \E e^{-\delta \theta_r} \one\cdot (\Delta Y (\theta_r) + \Delta Z(\theta_r)  ).
	 \earray
	 \eeq
By letting $r\to \infty$,
we arrive at
\beq{}\barray \ad \E \int_0^{\theta} e^{-\delta s}ds + \E \int_0^{\theta-} e^{-\delta s}  \one\cdot dY (s) + \E \int_0^{\theta-} e^{-\delta s}  \one\cdot dZ(s) \\
\ad \qquad + \lambda \E e^{-\delta \theta} \one\cdot (\Delta Y (\theta) + \Delta Z(\theta)  )\ge \E e^{-\delta \theta} \Phi({X}_\lambda
)-\Phi(x^0).
\earray
\eeq
If $\P(\theta=\infty)>0$, then $\E e^{-\delta \theta}\Phi({X}_\lambda)=0$. Otherwise, $\theta<\infty$ a.s. and in that case, since $\Phi({X}_\lambda)\in \partial B_\e(x^0)$,
 $\Phi({X}_\lambda)=0.$ Also, it is clear that $\Phi(x^0)= -K_0 \e^2.$ Thus,
we obtain
	 \bea\ad \E \int_0^{\theta} e^{-\delta s}ds + \E \int_0^{\theta-} e^{-\delta s}  \one\cdot dY (s) + \E \int_0^{\theta-} e^{-\delta s}  \one\cdot dZ(s)\\
	 \ad \qquad  + \lambda \E e^{-\delta \theta} \one\cdot (\Delta Y (\theta) + \Delta Z(\theta)  ) \ge K_0\e^2=\ka_0>0.
	 \eea
	 This establishes \eqref{e.3.27c}.
\end{proof}

\begin{prop}
	Let Assumption \ref{a:1} be satisfied and assume that $V(x)<\infty$ for $x\in \Sb $.
	The value function
	$V$ is a viscosity supersolution of the system of quasi-variational inequalities
	\eqref{e.3.17}; that is,
	 for
	 any $x^0\in S$ and any function $\ph\in C^2(S)$ satisfying
	 \beq{e.3.27j}(V-\ph)(x)\le (V-\ph)(x^0)=0,\eeq for all $x$ in a neighborhood of $x^0$, we have
	 \beq{e.3.27k}\max\limits_{i} \bigg\{(\L-\delta)\ph(x^0), f_i(x^0)-\dfrac{\partial \ph}{\partial x_i}(x^0),  \dfrac{\partial \ph}{\partial x_i}(x^0)-g_i(x^0)\bigg\}\ge 0.\eeq
\end{prop}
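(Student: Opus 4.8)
The plan is to argue by contradiction, paralleling the subsolution proof but now drawing on the near-optimality half of the dynamic programming principle together with the uniform lower bound of \lemref{lem:1}. Suppose \eqref{e.3.27k} fails at $x^0$, so that all three quantities are strictly negative there. Since $b,\sigma,f,g$ and the first two derivatives of $\ph$ are continuous, I can fix $\nu>0$ and shrink $\e$ so that, throughout $\overline{B_\e(x^0)}$ and for every $i$, one has simultaneously $(\L-\delta)\ph\le-\nu$, $\partial\ph/\partial x_i-f_i\ge\nu$ and $g_i-\partial\ph/\partial x_i\ge\nu$; I also take $\e$ small enough that $V\le\ph$ on $\overline{B_\e(x^0)}$.

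First I would apply the dynamic programming principle at the bounded stopping time $\theta_r=\theta\wedge r$: for each $n\ge 1$ there is $(Y,Z)\in\mathcal{A}_{x^0}$, with corresponding process $X$ and exit time $\theta$, such that
$$\ph(x^0)=V(x^0)\le\E\Big[\int_0^{\theta_r}e^{-\delta s}f(X(s-))\cdot dY(s)-\int_0^{\theta_r}e^{-\delta s}g(X(s-))\cdot dZ(s)+e^{-\delta\theta_r}V(X(\theta_r))\Big]+\frac1n.$$
Replacing $V(X(\theta_r))$ by $\ph(X(\theta_r))$ (using $V\le\ph$ on $\overline{B_\e(x^0)}$) and subtracting the Dynkin expansion \eqref{e.3.20} of $\ph$ to cancel the common term $\E e^{-\delta\theta_r}\ph(X(\theta_r))$, the estimate collapses to
$$-\E\!\int_0^{\theta_r}\!e^{-\delta s}(\L-\delta)\ph\,ds+\E\!\int_0^{\theta_r}\!e^{-\delta s}(\nabla\ph-f)\cdot dY^c+\E\!\int_0^{\theta_r}\!e^{-\delta s}(g-\nabla\ph)\cdot dZ^c+\mathcal J\le\frac1n,$$
where $\mathcal J=-\E\sum_{0\le s\le\theta_r}e^{-\delta s}\big[\Delta\ph(X(s))+f(X(s-))\cdot\Delta Y(s)-g(X(s-))\cdot\Delta Z(s)\big]$.

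Next I would bound each term below by $\nu$ times the corresponding piece of the functional in \lemref{lem:1}. The drift bound gives $-\E\int_0^{\theta_r}e^{-\delta s}(\L-\delta)\ph\,ds\ge\nu\,\E\int_0^{\theta_r}e^{-\delta s}\,ds$, while $\nabla\ph-f\ge\nu\one$ and $g-\nabla\ph\ge\nu\one$ (together with $dY^c,dZ^c\ge0$) control the two continuous control integrals. For $\mathcal J$ I would invoke \lemref{lem} to write $\Delta\ph(X(s))=-\Delta Y(s)\cdot\nabla\ph(\widehat X(s))+\Delta Z(s)\cdot\nabla\ph(\widetilde X(s))$ with $\widehat X(s),\widetilde X(s)\le X(s)$; since $f,g$ are non-increasing (so that $f(X(s-))\le f(\widehat X(s))$ at a harvesting jump and $g(X(s-))\ge g(\widetilde X(s))$ at a seeding jump) the pointwise bounds on $\nabla\ph$ make each interior jump contribute at least $\nu\,\one\cdot(\Delta Y(s)+\Delta Z(s))$. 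Letting $r\to\infty$ (by monotone/bounded convergence, using $V<\infty$) and applying \lemref{lem:1} then forces the left-hand side to be $\ge\nu\kappa_0>0$, which contradicts the bound $\le1/n$ once $n>1/(\nu\kappa_0)$.

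The main obstacle is the jump that may occur exactly at the exit time $\theta$: such a jump can overshoot $\partial B_\e(x^0)$, so that $X(\theta)\notin\overline{B_\e(x^0)}$ and the replacement $V(X(\theta))\le\ph(X(\theta))$ is no longer justified. This is precisely what the interpolation parameter $\lambda$ and the boundary point $X(\theta-)+\lambda(X(\theta)-X(\theta-))\in\partial B_\e(x^0)$ in \lemref{lem:1} are built to absorb: I would execute only the fraction $\lambda$ of the terminal jump, stopping the process on $\partial B_\e(x^0)\subset\overline{B_\e(x^0)}$ where $V\le\ph$ still holds, while the discarded portion produces exactly the terms $\lambda e^{-\delta\theta}\one\cdot\Delta Y(\theta)$ and $\lambda e^{-\delta\theta}\one\cdot\Delta Z(\theta)$ that \lemref{lem:1} bounds from below. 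It is essential here that the constant $\kappa_0=K_0\e^2$ is uniform over all admissible strategies, since the near-optimal strategy — and hence $X$, $\theta$, and $\lambda$ — varies with $n$; this uniformity is what lets the contradiction survive.
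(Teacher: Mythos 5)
Your proposal mirrors the paper's own argument in all essential respects: contradiction at $x^0$ with a uniform negativity margin on a small closed ball, Dynkin's formula, \lemref{lem} plus the monotonicity of $f,g$ for interior jumps, and the uniform constant $\ka_0$ from \lemref{lem:1} to force the contradiction; your use of a $1/n$-optimal strategy is just the $\e$-optimal-selection form of the paper's final step, where the inequality is proved for every admissible strategy and then the supremum is taken against the dynamic programming principle.

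There is, however, one genuine gap, precisely at the point you flag as the main obstacle. Your near-optimality inequality involves $V(X(\theta_r))$ evaluated at the \emph{post-jump} (possibly overshooting) point, and you propose to repair this by ``executing only the fraction $\lambda$ of the terminal jump'' so the process stops at $X_\lambda^r=X(\theta_r-)+\lambda\big(X(\theta_r)-X(\theta_r-)\big)\in\partial B_\e(x^0)$, where $V\le\ph$ holds. But once you truncate the jump you have changed the strategy, and the truncated strategy need not remain $1/n$-optimal: to transfer the near-optimality inequality you must compare $V(X(\theta_r))$ with $V(X_\lambda^r)$. This is exactly what the paper does at \eqref{e.3.31}, invoking the Lipschitz-type estimate \eqref{e.3.5} of \propref{prop:2}(a), namely $V(X(\theta_r))\le V(X_\lambda^r)-(1-\lambda)f(X_\lambda^r)\cdot\Delta Y(\theta_r)+(1-\lambda)g(X_\lambda^r)\cdot\Delta Z(\theta_r)$, followed once more by the monotonicity of $f$ and $g$. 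Without citing \eqref{e.3.5} (or some equivalent Lipschitz property of $V$), the $\lambda$-interpolation alone does not close the argument. A related slip: the terms $\lambda e^{-\delta\theta}\one\cdot\Delta Y(\theta)+\lambda e^{-\delta\theta}\one\cdot\Delta Z(\theta)$ in \lemref{lem:1} are generated by the \emph{executed} $\lambda$-fraction of the jump (the part carrying $X(\theta-)$ to the boundary), not by the discarded portion; it is the discarded $(1-\lambda)$-fraction whose harvest value and seeding cost must be absorbed via \eqref{e.3.5}.
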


\begin{proof}
	Let $x^0\in S$ and suppose $\varphi(\cdot)\in C^2(S)$ satisfies \eqref{e.3.27j}
for all $x$ in a neighborhood of $x^0$.

We argue by contradiction. Suppose that \eqref{e.3.27k} does not hold. Then there exists a constant $A>0$ such that
	\beq{e.3.27m}\max\limits_{i} \bigg\{(\L-\delta)\varphi(x^0), f_i(x^0)-\dfrac{\partial \varphi}{\partial x_i}(x^0),  \dfrac{\partial \varphi}{\partial x_i}(x^0)-g_i(x^0)\bigg\}\le -2A< 0.\eeq
Let $\e>0$ be small enough so that $\overline{B_\e(x^0)}\subset S$ and for any $x\in \overline{B_\e (x^0  )}$, $\varphi(x)\ge V(x)$ and
	\beq{e.3.27n}
	\max\limits_{i} \bigg\{(\L-\delta)\varphi(x), f_i(x)-\dfrac{\partial \varphi}{\partial x_i}(x),  \dfrac{\partial \varphi}{\partial x_i}(x)-g_i(x)\bigg\}\le -A< 0.
	\eeq
		Let
	 $(Y, Z)\in \mathcal{A}_{x^0}$ and ${X}$ be the corresponding harvested process. Recall that $\theta=\inf\{t\ge 0: {X}(t)\notin B_\e(x^0) \}$ and $\theta_r=\theta\wedge r$ for any $r>0$.
 It follows from the Dynkin formula that
	\beq{e.3.27p}\barray \ad \E e^{-\delta \theta_r} \varphi({X}(\theta_r-)-\varphi(x^0)) \\
	\ad \quad = \E \int_0^{\theta_r -} e^{-\delta s} (\L -\delta ) \varphi ({X}(s))ds - \E \int_0^{\theta_r-} e^{-\delta s}  \nabla \varphi ({X}(s)) \cdot dY^c (s)\\
	\ad \quad + \E\int_0^{\theta_r-} e^{-\delta s}  \nabla \varphi ({X}(s)) \cdot dZ^c (s) +  \E \sum\limits_{0\le s< \theta_r} e^{-\delta s} \Big[ \varphi({X}(s) )- \varphi({X}(s-)) \Big]
.	\earray
	\eeq
	By virtue of Lemma \ref{lem}, for any $s\in [0, \theta_r)$,
	there exist  $\widehat{X}(s)\in \rr^d$ and $\widetilde{X}(s)\in \rr^d$ such that $\widehat{X}(s)\le X(s)$, $\widetilde{X}(s)\le X(s)$, and $$\varphi({X}(s) )- \varphi({X}(s-))\le -\Delta Y(s)\cdot \nabla \varphi (\widehat{X}(s)) + \Delta Z(s)\cdot \nabla \varphi (\widetilde{X}(s)).$$
This, together with the monotonicity of the functions $f$, $g$, and equation \eqref{e.3.27n} imply that
	\bea\varphi({X}(s) )- \varphi({X}(s-))\ad \le ( -f({X}(s)) -A\one )\cdot \Delta Y(s)\\
	\ad \qquad + ( g({X}(s)) -A \one )\cdot \Delta Z(s).\eea
	Hence it follows from \eqref{e.3.27n} and \eqref{e.3.27p} that
	\beq{}\barray \ad \E e^{-\delta \theta_r} \varphi({X}(\theta_r-))-\varphi(x^0)) \\
	\ad \quad \le  \E \int_0^{\theta_r -} e^{-\delta s} (-A)ds + \E \int_0^{\theta_r-} e^{-\delta s} (-f ({X}(s))-A\one) \cdot dY^c (s)\\
	\ad \quad + \E\int_0^{\theta_r-} e^{-\delta s}  (g({X}(s)) -A\one ) \cdot dZ^c (s) +  \E \sum\limits_{0\le s< \theta_r} e^{-\delta s} ( -f({X}(s)) -A\one )\cdot \Delta Y(s)\\
	\ad \qquad  + \E \sum\limits_{0\le s< \theta_r} e^{-\delta s} ( g({X}(s)) -A\one  )\cdot \Delta Z(s)\\
	\ad \quad =- \E \int_0^{\theta_r -} e^{-\delta s} f ({X}(s))\cdot dY(s) + \E \int_0^{\theta_r -} e^{-\delta s} g ({X}(s))\cdot dZ(s)-A \E \int_0^{\theta_r -} e^{-\delta s} ds \\
	\ad 	\qquad - A \E \int_0^{\theta_r -} e^{-\delta s}\one  \cdot dY(s) - A \E \int_0^{\theta_r -} e^{-\delta s} \one \cdot dZ(s).
	\earray
	\eeq
	Therefore,
	\beq{e.3.28}
	\barray
	\varphi(x^0) \ad \ge \E e^{-\delta \theta_r} \varphi ({X}(\theta_r-))  + \E \int_0^{\theta_r -} e^{-\delta s} f ({X}(s-))\cdot dY(s) \\
	\ad \quad - \E \int_0^{\theta_r -} e^{-\delta s} g ({X}(s-))\cdot dZ(s)\\
	\ad \quad + A \E\bigg[ \int_0^{\theta_r} e^{-\delta s} ds  +  \int_0^{\theta_r -} e^{-\delta s}\one \cdot  dY(s) +  \int_0^{\theta_r -} e^{-\delta s}\one \cdot  dZ(s) \bigg].
	\earray
	\eeq	
	We are in a position to apply Lemma \ref{lem:1}. To this end, recall from Lemma \ref{lem:1} that $\lambda$ is a random variable such that if $X(\theta)=X(\theta-)$, then $\lambda=0$; and if $X(\theta-)\ne X(\theta)$, then
	$\lambda$ is the positive number in $(0,1]$
such that $${X}_\lambda={X}(\theta-) +\lambda ({X}(\theta) - {X}(\theta-)  )={X}(\theta-)-\lambda (\Delta Y(\theta )- \Delta Z(\theta ))\in \partial {B_\e(x^0)}.$$ Note that $\lambda$ is independent of $r$.
Also recall that
$${X}_\lambda^r={X}(\theta_r-) +\lambda ({X}(\theta_r) - {X}(\theta_r-)  )={X}(\theta_r-)-\lambda (\Delta Y(\theta_r )- \Delta Z(\theta_r ))\in  \overline{B_\e(x^0)}.$$
	Using the same argument as the one in Lemma \ref{lem}, we obtain
	\beq{e.3.29}\barray	\varphi ( {X}(\theta_r-) ) - \varphi ( {X}_\lambda^r)\ad \ge \lambda [f({X}(\theta_r-)) +A \one]\cdot \Delta Y(\theta_r)  \\
	\ad \quad + \lambda [A\one - g({X}(\theta_r-))]\cdot \Delta Z(\theta_r).\earray\eeq
	Combining \eqref{e.3.28} and \eqref{e.3.29}, we have
	\beq{e.3.30}
	\barray
\ad 	V(x^0) = \varphi(x^0) \\
	\ad \ge \E e^{-\delta s} \varphi ( {X}_\lambda^r)  + \E \int_0^{\theta_r -} e^{-\delta s} f ({X}(s-))\cdot dY(s)\\
	\ad \quad
	- \E \int_0^{\theta_r -} e^{-\delta s} g ({X}(s-))\cdot dZ(s)\\
	\ad \quad  + A  \E\bigg[ \int_0^{\theta_r} e^{-\delta s} ds  +  \int_0^{\theta_r -} e^{-\delta s}\one\cdot  dY(s) +  \int_0^{\theta_r -} e^{-\delta s}\one\cdot  dZ(s) \bigg]  \\
	\ad \quad +  \lambda \E e^{-\delta \theta_r} [f({X}(\theta_r-)) +A\one ]\cdot \Delta Y(\theta_r)  + \lambda \E e^{-\delta \theta_r} [A\one - g({X}(\theta_r-))]\cdot \Delta Z(\theta_r).
	\earray
	\eeq
	Since ${X}_\lambda^r\in \overline{B_\e(x^0)}$, $\varphi ({X}_\lambda^r)\ge V({X}_\lambda^r)$. On the other hand, it follows from \eqref{e.3.5} that
	\beq{e.3.31}
	\barray
	V({X}_\lambda^r)\ad \ge V({X}(\theta_r))\\
	\ad \quad  + (1-\lambda)\Delta Y(\theta_r)\cdot f({X}_\lambda^r) -(1-\lambda) \Delta Z(\theta_r)\cdot  g( {X}_\lambda^r)\\
	\ad \ge \quad V({X}(\theta_r))\\
	\ad \quad + (1-\lambda)\Delta Y(\theta_r)\cdot f({X}(\theta_r-)) -(1-\lambda) \Delta Z(\theta_r)\cdot  g( {X}(\theta_r-)).
	\earray
	\eeq
	By \eqref{e.3.31} and \eqref{e.3.30} we note that
	\beq{e.3.31a}
	\barray
	V(x^0) \ad \ge \E \int_0^{\theta_r -} e^{-\delta s} f ({X}(s-))\cdot dY(s)
	- \E \int_0^{\theta_r -} e^{-\delta s} g ({X}(s-))\cdot dZ(s)\\
	\aad \quad + A \E\bigg[ \int_0^{\theta_r} e^{-\delta s} ds  +  \int_0^{\theta_r -} e^{-\delta s}\one \cdot  dY(s) +  \int_0^{\theta_r -} e^{-\delta s}\one\cdot  dZ(s) \bigg] + \E e^{-\delta \theta_r} 	V({X}(\theta_r))\\
	\aad\quad  + \lambda \E e^{-\delta \theta_r} [f({X}(\theta_r-)) +A \one ]\cdot \Delta Y(\theta_r)  + \lambda \ e^{-\delta \theta_r} [A\one - g({X}(\theta_r-))]\cdot \Delta Z(\theta_r)\\
	\aad\quad + (1-\lambda)\E e^{-\delta \theta_r} \Delta Y(\theta_r)\cdot f({X}(\theta_r-)) -(1-\lambda) \E e^{-\delta \theta_r} \Delta Z(\theta_r)\cdot  g( {X}(\theta_r))\\
	\ad  \ge  \E e^{-\delta \theta_r} 	V({X}(\theta_r))+\E \int_0^{\theta_r} e^{-\delta s} f ({X}(s-)\cdot dY(s)
	- \E \int_0^{\theta_r} e^{-\delta s} g ({X}(s-))\cdot dZ(s)\\
	\aad \quad + A \E\Big[ \int_0^{\theta_r} e^{-\delta s} ds  + \lambda e^{-\delta \theta_r} \one \cdot\Delta Z(\theta_r)  + \lambda e^{-\delta \theta_r} \one \cdot\Delta Y(\theta_r)\\
	\aad\quad
+ \int_0^{\theta_r -} e^{-\delta s}\one \cdot  dY(s) +  \int_0^{\theta_r -} e^{-\delta s}\one\cdot  dZ(s) \Big].
	\earray
	\eeq
Letting $r\to \infty$, we have
	\beq{e.3.31b}
	\barray
	V(x^0) \ad   \ge \E e^{-\delta \theta} 	V({X}(\theta))+ \E \int_0^{\theta} e^{-\delta s} f ({X}(s-)\cdot dY(s)
	- \E \int_0^{\theta} e^{-\delta s} g ({X}(s-))\cdot dZ(s)\\
	\aad \quad + A \E\Big[ \int_0^{\theta} e^{-\delta s} ds  + \lambda e^{-\delta \theta} \one \cdot\Delta Z(\theta)  + \lambda e^{-\delta \theta} \one \cdot\Delta Y(\theta) \\
	\aad
   \quad + \int_0^{\theta -} e^{-\delta s}\one \cdot  dY(s) +  \int_0^{\theta -} e^{-\delta s}\one\cdot  dZ(s)\Big].
	\earray
	\eeq
Using \eqref{e.3.27c} and \eqref{e.3.31b}, we arrive at
	\beq{e.3.31c}\barray
	V(x^0)\ad  \ge  \E \int_0^{\theta} e^{-\delta s} f ({X}(s-))\cdot dY(s)
	- \E \int_0^{\theta} e^{-\delta s} g ({X}(s-))\cdot dZ(s)\\
	\ad \quad +  \E e^{-\delta \theta} 	V({X}(\theta)) +A\ka_0.
	\earray	\eeq
	Taking the supremum over $(Y, Z)\in \mathcal{A}_{x^0}$, it follows that
	\beq{e.3.31d}\barray
	V(x^0)\ad \ge \sup\limits_{\mathcal{A}_{x^0}} \E\bigg[ \int_0^{\theta} e^{-\delta s} f ({X}(s-))\cdot dY(s)
	-\int_0^{\theta} e^{-\delta s} g ({X}(s-))\cdot dZ(s) \\
	\ad \quad +  \E e^{-\delta \theta} 	V({X}(\theta)) +A\ka_0\bigg].
	\earray
	\eeq
	In view of the dynamic programming principle, \eqref{e.3.31d} can be rewritten as
	$$V(x^0)\ge V(x^0) +A\ka_0 > V(x^0),$$ which is a contradiction. As a result \eqref{e.3.27k} has to hold, i.e. $V$ is viscosity supersolution of \eqref{e.3.17}.
\end{proof}

Summarizing what have obtained thus far, we state the following result.

\begin{thm} \label{thm:vis}
	Let Assumption \ref{a:1} be satisfied and suppose $V(x)<\infty$ for $x\in {S}$.
	The value function
	$V$ is a viscosity subsolution and also a viscosity supersolution, and hence
a viscosity solution, of the system of quasi-variational inequalities \eqref{e.3.17}.
	\end{thm}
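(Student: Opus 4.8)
The plan is to assemble the statement directly from the two propositions established immediately above, since by definition a function is a viscosity solution of the system \eqref{e.3.17} exactly when it is simultaneously a viscosity subsolution and a viscosity supersolution. The first of these propositions shows that $V$ is a viscosity subsolution: for every $x^0\in S$ and every $\phi\in C^2(S)$ with $(V-\phi)(x)\ge(V-\phi)(x^0)=0$ in a neighborhood of $x^0$, the inequality \eqref{e.3.18} holds. The second shows that $V$ is a viscosity supersolution: for every $x^0\in S$ and every $\varphi\in C^2(S)$ with $(V-\varphi)(x)\le(V-\varphi)(x^0)=0$ in a neighborhood of $x^0$, the reverse inequality \eqref{e.3.27k} holds.

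I would therefore conclude by invoking both propositions under their common hypotheses (Assumption \ref{a:1} together with the finiteness of $V$ on $S$, the latter ensuring via Proposition \ref{prop:2} that $V$ is continuous, as the viscosity framework requires) and by observing that their conjunction is precisely the asserted sub- and supersolution characterization of \eqref{e.3.17}. No extra assumptions, test-function constructions, or estimates are needed beyond those already used in the two propositions; the continuity supplied by Proposition \ref{prop:2} is what lets the two one-sided characterizations be merged into the single notion of a viscosity solution.

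There is no genuine obstacle remaining at this stage: all of the analytic work has already been carried out in proving the two halves separately. In the subsolution direction this amounted to perturbing the optimal strategy by small instantaneous harvests and seedings and extracting the three inequalities \eqref{e.3.22}, \eqref{e.3.23}, \eqref{e.3.24} from the dynamic programming principle; in the more delicate supersolution direction it relied on the strictly positive lower bound $\kappa_0$ produced in Lemma \ref{lem:1} to force a strict contradiction $V(x^0)\ge V(x^0)+A\kappa_0$ with the dynamic programming principle. The present theorem is only the bookkeeping step that records those two results as a single viscosity-solution statement.
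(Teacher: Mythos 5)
Your proposal matches the paper exactly: the paper proves the subsolution and supersolution properties in the two preceding propositions and then states Theorem \ref{thm:vis} with no further argument, introducing it only as a summary (``Summarizing what have obtained thus far\dots''). Your observation that the finiteness hypothesis feeds into Proposition \ref{prop:2} to give the continuity needed for the viscosity framework is a correct and slightly more explicit accounting of the bookkeeping than the paper itself provides.
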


\section{Numerical Algorithm}
\label{sec:alg}

\subsection{Transition Probabilities and Local Consistency}
We use the notation defined in Section \ref{sec:num}. To proceed, we state
one more assumption
below, which  will be used to ensure the validity of transition probabilities $p^h(x, y|\pi)$. However, it is not an essential assumption. There are several alternatives to handle the cases when Assumption \ref{a:2} fails. We refer the reader to \cite[page 1013]{Kushner90} for a detailed discussion. Define for any $x\in \bar S$ the covariance matrix $a(x)= \sg(x)\sg'(x)$.
\begin{asm}\label{a:2}
	For any $i=1, \dots, d$ and $x\in \Sb $, $$a_{ii}(x)-\sum\limits_{j: j\ne i}\big|a_{ij}(x)\big|\ge 0.$$
\end{asm}
Let $\E^{h, \pi}_{x, n}$, $\Cov^{h, \pi}_{x, n}$ denote the conditional expectation and covariance given by
$$\{X_m^h, \pi_m^h, m\le n, X_n^h=x, \pi^h_n=\pi \},$$
respectively. Our objective in this subsection is to define transition probabilities $p^h (x, y | \pi)$ so that the controlled Markov chain $\{X^h_n\}$ is locally consistent with respect to the diffusion \eqref{e.2}
in the sense that the following conditions hold:
\beq{e.4.5a}
\barray
\aad \E^{h, 0}_{x, n}\Delta X_n^h = {b}(x) \Delta t^h(x) + o(\Delta t^h(x)),\\
\aad Cov^{h, 0}_{x, n}\Delta X_n^h = a(x)\Delta t^h(x) + o(\Delta t^h(x)),\\
\aad \sup\limits_{n, \ \omega} |\Delta X_n^h| \to 0 \ \hbox{ as } \ h \to 0.
\earray
\eeq
To this end, using the procedure in \cite{Kushner90},
we define the approximation to the first and the second derivatives of $V$ by a finite difference method
using the step size $h>0$ for the state variable.
Afterwards,
 we plug in all the approximations into the first part of system \eqref{e.3.17}, combine similar terms and divide by the coefficient of $V^h(x)$. The transition probabilities are the coefficients of the resulting equation.
For $x\in S_h$, define
\beq{e.4.7}
\barray
\aad Q_h (x)=\sum\limits_{i=1}^d a_{ii}(x) -\sum\limits_{i, j: i\ne j}\dfrac{1}{2}|a_{ij}(x)| +h\sum\limits_{i=1}^d |b_i(x)|,\\
\aad \Delta t^h(x)=\dfrac{h^2}{Q_h(x)},\\
\aad p^h \(x, x+h\ei |\pi=0\) =
\dfrac{a_{ii}(x)/2-\sum\limits_{j: j\ne i}|a_{ij}(x)|/2+b_{i}^+(x) h }{Q_h (x)}, \\
\aad p^h \(x, x-h \ei) | \pi=0\) =
\dfrac{a^{ii}(x)/2-\sum\limits_{j: j\ne i}|a^{ij}(x)|/2+b_{i}^-(x) h}{Q_h (x)}, \\
\aad p^h \( x, x+h\ei +h \ej | \pi=0\) =  p^h \( x, x-h\ei -h\ej | \pi=0\) =
\dfrac{a_{{ij}^+}(x)}{2Q_h (x)}, \\
\aad p^h \( x, x+h\ei -h \ej | \pi=0\) = p^h \( x, x-h\ei + h \ej |  \pi=0 \)=
\dfrac{a_{{ij}^-}(x)}{2Q_h (x)},
\earray
\eeq
where for a real number $c$,  $c^+=\max\{c, 0\}$,
$c^-=-\min\{0, c\}$; that is, $c=c^+$ if $c\ge 0$ and $c=-c^-$ if $c<0$.
Set $p^h \(x, y|\pi=0\)=0$ for all unlisted values of $y\in S^h$.
Assumption \ref{a:2} guarantees that
the transition probabilities in \eqref{e.4.7} are well-defined. At the seeding and harvesting steps, we define
\beq{e.4.8}
\barray
\aad p^h\( x, x - h \ei| \pi = i\)=1,\\
\aad
p^h\( x, x + h \ei| \pi =- i\)=1.
\earray
\eeq
Thus, $p^h \(x,  y|\pi=\pm i\)=0$ for all nonlisted values of $y\in S^h$.
Using the above transition probabilities,
we can check
 whether the locally consistent conditions of $\{X^h_n\}$ in \eqref{e.4.5a} are satisfied.

\subsection{Continuous--time interpolation and time rescaling}

The convergence
 result is based on a continuous-time interpolation of the chain,
which will be  constructed to be piecewise constant on the time interval $[t^h_n, t^h_{n+1}), n\ge 0$.
For use in this construction, we define
$n^h(t)=\max\{n: t^h_n\le t\}, t\ge 0$.
We first define discrete time processes associated with the controlled Markov chain as follows. Let
$Y^h_0=Z^h_0=B^h_0=M^h_0=0$ and define for $n\ge 1$,
\beq{e.4.11}
\barray
\aad Y^h_n = \sum\limits_{m=0}^{n-1}\Delta Y^h_m, \quad Z^h_n = \sum\limits_{m=0}^{n-1}\Delta Z^h_m,\\
\aad B^h_n = \sum\limits_{m=0}^{n-1} I_{\{\pi_m^h=0\}}
\E^{h}_m \Delta\xh_m, \quad M^h_n  = \sum\limits_{m=0}^{n-1} (\Delta \xh_m -
\E^{h}_m \Delta X_m)I_{\{\pi_m^h=0\}}.
\earray
\eeq
The piecewise constant interpolations, denoted by $(X^h\cd, Y^h\cd, Z^h\cd, B^h\cd, M^h\cd)$ are naturally defined as
\beq{e.4.12}
\barray
\aad X^h(t) = X^h_{n^h(t)},\\
\aad Y^h(t) = Y^h_{n^h(t)}, \quad Z^h(t) = Z^h_{n^h(t)},\\
\aad B^h(t) = B^h_{n^h(t)}, \quad M^h(t) = M^h_{n^h(t)}, \quad t\ge 0.
\earray
\eeq
Define $\mathcal{F}^h(t)=\sigma\{X^h(s), Y^h(s), Z^h(s): s\le t\}=\mathcal{F}^h_{n^h(t)}$.
Using the representation of diffusion, harvesting, and seeding steps in \eqref{e.4.1}, we obtain
\beq{e.4.13}
\barray
X^h_n = x + \sum\limits_{m=0}^{n-1} \Delta X_m^h \one_{\{ \pi^h_m\le -1\}} +  \sum\limits_{m=0}^{n-1} \Delta X_m^h \one_{\{ \pi^h_m\ge 1\}}+ \sum\limits_{m=0}^{n-1} \Delta X_m^h \one_{\{ \pi^h_m=0\}}
\earray
\eeq
This implies
\beq{e.4.14}
X^h(t)
= x + B^h(t) + M^h(t) -Y^h(t)+Z^h(t).
\eeq
Recall that $\Delta t^h_m = h^2/Q_h(X^h_m)$ if $\pi^h_m=0$ and $\Delta t^h_m = 0$ if $\pi^h_m\ge 1$ or $\pi^h_m\le -1$. It follows that
\beq{e.4.15}
\barray
B^h(t) \ad = \sum\limits_{m=0}^{n^h(t)-1} b (X^h_m)\Delta t^h_m\\
\ad=\int_0^t b (X^h(s)) ds-\int_{t^h_{n^h(t)}}^t b (X^h(s)) ds\\
\ad = \int_0^t b (X^h(s)) ds + \e^h_1(t),
\earray
\eeq
with $\{\e_1^h\cd\}$
 be ing an $\mathcal{F}^h(t)$-adapted process satisfying \begin{equation*}\lim\limits_{h\to 0} \sup\limits_{t\in [0, T_0]}\E|\e_1^h(t)|=0 \quad \text{for any }0<T_0<\infty.\end{equation*} We now attempt to represent $M^h\cd$ in a form similar to the diffusion term in \eqref{e.2}.
Factor
$$a(x)= \sg(x)\sg'(x)=P(x)D^2(x)P'(x),$$
where $P\cd$ is an orthogonal matrix, $D\cd=\diag\{r_1\cd, ..., r_d
\cd\}$.
Without loss of generality, we suppose that
$\inf\limits_{x}r_i(x)>0$ for all $i=1, ..., d$. Define $D_0\cd=\diag\{1/r_1\cd, ..., 1/r_d
\cd\}$.  
\begin{rem} In the argument above, for simplicity, we assume that the diffusion matrix $(a(x))$ is nondegenerate. If this is not the case, we can use the trick from \cite[p.288-289]{Kushner92} to establish equation  \eqref{e.4.17}. 	
\end{rem}
Define $W^h\cd$ by
\beq{e.4.16}
\barray
W^h(t)  \ad = \int_0^t D_0 (X^h(s))
P' (X^h(s))dM^h(s)\\
\ad = \sum\limits_{m=0}^{n^h(t)-1} D_0 (X^h_m)
P' (X^h_m)(\Delta \xh_m -\E^{h}_m \Delta\xh_m)I_{\{ \pi^h_m=0\}}.
\earray
\eeq
Then we can write
\beq{e.4.17}
M^h(t) =\int_0^t \sg (\xh(s)) dW^h(s) + \e_2^h(t),
\eeq
with $\{\e_2^h\cd\}$
 being an $\mathcal{F}^h(t)$-adapted process satisfying \begin{equation*}\lim\limits_{h\to 0} \sup\limits_{t\in [0, T_0]}\E|\e_2^h(t)|=0 \quad \text{for any }0<T_0<\infty.\end{equation*}
Using  \eqref{e.4.15} and \eqref{e.4.17}, we can write \eqref{e.4.14} as
\beq{e.4.18}
X^h(t) = x + \int_0^t b (X^h(s)) ds +  \int_0^t \sg(X^h(s)) dW^h(s) -Y^h(t)+Z^h(t)+\e^h(t),
\eeq
where $\e^h\cd$ is an $\mathcal{F}^h(t)$-adapted process satisfying \begin{equation*}\lim\limits_{h\to 0} \sup\limits_{t\in [0, T_0]}\E|\e^h(t)|=0 \quad \text{for any }0<T_0<\infty.\end{equation*}
The objective
function from \eqref{e.4.4} can be rewritten as
\beq{e.4.19}
J^h(x, Y^h, Z^h)= \E \int_0^{\infty} e^{-\delta s}\Big[ f(X^h(s-)) \cdot dY^{h}(s)-g(X^h(s-))\cdot  dZ^{h}(s)\Big].
\eeq

\noindent{\bf Time rescaling.} Next we will introduce ``stretched-out'' time scale. This is similar to the approach previously used by Kushner \cite{Kushner91} and Budhiraja and Ross \cite{Budhiraja07} for singular control problems. Using
the new time scale, we can overcome the possible non-tightness of the family of processes $\{Y^h\cd, Z^h\cd\}_{h>0}$.

Define the rescaled
time increments $\{\Delta \wdh{t}_n^h: n=0, 1, ...\}$ by
\beq{e.4.20}
\barray
\aad \Delta \wdh{t}^h_n = \Delta t^h_n I_{\{ \pi^h_n =0 \}} + h I_{\{ \pi^h_n \le -1 \}} + h I_{\{ \pi^h_n \ge 1 \}},\\
\aad \wdh{t}_0=0, \qquad\wdh{t}_n = \sum\limits_{k=0}^{n-1}\Delta \wdh{t}^h_k, \quad n\ge 1.\\
\earray
\eeq
The time scale is stretched out by $h$ at the seeding and harvesting steps.

\begin{defn}\label{def:1}{\rm
		The rescaled time process
		$\wdh{T}^h\cd$ is the unique continuous nondecreasing process satisfying the following:
		\begin{itemize}
			\item[(a)] $\wdh{T}^h(0)=0$;
			\item[(b)] the derivative of $\wdh{T}^h\cd$ is 1 on $(\wdh{t}^h_n, \wdh{t}^h_{n+1})$ if $\pi^h_n=0$, i.e., $n$ is a diffusion step;
			\item[(c)] the derivative of $\wdh{T}^h\cd$ is 0 on $(\wdh{t}^h_n, \wdh{t}^h_{n+1})$ if $\pi^h_n\ne 0$, i.e., $n$ is a seeding step or a harvesting step.	
		\end{itemize}	
	}
\end{defn}

Thus $\wdh{T}^h\cd$ does not increase at the times $t$ at which a harvesting step or a seeding step occurs.
Define the rescaled and interpolated process $\wdh{X}^h(t)= \xh(\wdh{T}^h(t))$ and likewise define $\wdh{Y}^h\cd$, $\wdh{Z}^h\cd$, $\wdh{B}^h\cd$, $\wdh{M}^h\cd$, and the filtration $\wdh{\mathcal{F}}^h\cd$ similarly.
It follows from \eqref{e.4.14} that
\beq{e.4.21}
\wdh{X}^h(t)=x+\wdh{B}^h(t) + \wdh{M}^h(t) - \wdh{Y}^h(t) + \wdh{Z}^h(t).
\eeq
Using the same argument we used for \eqref{e.4.18}
we obtain \beq{e.4.22}
\wdh{X}^h(t) = x + \int_0^t b (\wdh{X}^h(s))d\wdh{T}^h(s) + \int_0^t \sg(\wdh{X}^h(s)) d \wdh{W}^h(s)  - \wdh{Y}^h(t) + \wdh{Z}^h(t) + \wdh{\e}^h(t),
\eeq
with $\wdh{\e}^h\cd$ is an $\wdh{\mathcal{F}}^h\cd$-adapted process satisfying \beq{e.4.23}\lim\limits_{h\to 0} \sup\limits_{t\in [0, T_0]}\E|\wdh{\e}^h(t)|=0 \quad \text{for any }0<T_0<\infty.\eeq

\subsection{Convergence}

Using weak convergence methods, we can obtain the convergence of the algorithms. The proofs to the following results are essentially the same as those in \cite{Jin12, Ky16} and we therefore omit the details.

\begin{thm}\label{thm:thm} Suppose Assumptions \ref{a:1} and \ref{a:2} hold.
	Let the approximating chain $\{X^h_n \}$ be constructed with transition probabilities defined in \eqref{e.4.7}-\eqref{e.4.8},
	$\big(X^h\cd, W^h\cd, Y^h\cd, Z^h\cd\big)$ be the continuous-time interpolation defined in \eqref{e.4.11}-\eqref{e.4.12}, \eqref{e.4.16}, and $\wdh{T}^h\cd$ be the process from
Definition {\rm\ref{def:1}}.
Let $\wdh{X}^h\cd, \wdh{W}^h\cd, \wdh{Y}^h\cd, \wdh{Z}^h\cd$ be the corresponding rescaled processes and denote
$$\wdh{H}^h\cd=\Big(\wdh{X}^h\cd, \wdh{W}^h\cd, \wdh{Y}^h\cd, \wdh{Z}^h\cd, \wdh{T}^h\cd\Big).$$
Then the family of processes
	$(\wdh{H}^h)_{h>0}$ is tight. 	
	As a result, $(\wdh{H}^h)_{h>0}$ has a weakly convergent subsequence with limit $$\wdh{H}\cd=\Big(\wdh{X}\cd, \wdh{W}\cd, \wdh{Y}\cd, \wdh{Z}\cd, \wdh{T}\cd\Big).$$
\end{thm}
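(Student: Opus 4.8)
The plan is to verify tightness componentwise for the family $(\wdh{H}^h)_{h>0}$ and then invoke Prokhorov's theorem to extract a weakly convergent subsequence, following the weak-convergence methodology of Kushner and Dupuis \cite{Kushner91, Budhiraja07, Jin12, Ky16}. The decisive simplification is that all controls live in $\mathcal{A}^U_x$, so the interpolated state stays in the compact set $[0,U]^d$; consequently $b\cd$, $\sg\cd$, and the factors $P\cd, D_0\cd$ are uniformly bounded along the chain, and all the estimates below are uniform in $h$. I would first dispose of the rescaled clock $\wdh{T}^h\cd$: by Definition \ref{def:1} it is continuous, nondecreasing, starts at $0$, and has derivative in $\{0,1\}$, hence is Lipschitz with constant $1$ uniformly in $h$; equi-Lipschitz functions vanishing at the origin form a tight family on every compact time interval, so $\{\wdh{T}^h\cd\}$ is tight.

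The heart of the argument is the tightness of the singular controls $\wdh{Y}^h\cd$ and $\wdh{Z}^h\cd$, and this is exactly what the stretched-out time scale \eqref{e.4.20} is designed to deliver. On a harvesting or seeding step the rescaled time increment is $\Delta \wdh{t}^h_n = h$ while simultaneously $|\Delta \wdh{Y}^h_n| + |\Delta \wdh{Z}^h_n| = h$, whereas on a diffusion step the controls do not move. Thus over any rescaled-time interval $[s,t]$ the total increment of $\wdh{Y}^h\cd + \wdh{Z}^h\cd$ is bounded by $(t-s)$ up to a one-step error of order $h$. This uniform Lipschitz-type control yields equicontinuity and hence tightness of both families. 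I expect this to be the main obstacle: in the original time scale these processes are genuinely non-tight because admissible controls may be singular, with jumps and unbounded-variation rates, so the entire payoff of introducing $\wdh{T}^h\cd$ is to trade time for variation and recover equicontinuity.

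With the clock and the controls in hand, I would treat the drift and martingale parts. The drift $\wdh{B}^h\cd$ has increments bounded by $\|b\|_{\infty, [0,U]^d}(\wdh{T}^h(t)-\wdh{T}^h(s)) \le \|b\|_{\infty, [0,U]^d}(t-s)$, hence is equicontinuous and tight. For $\wdh{M}^h\cd$ I would invoke the local consistency relations \eqref{e.4.5a}: the conditional covariance of each diffusion-step increment is $a(X^h_m)\Delta t^h_m + o(\Delta t^h_m)$ and the increments are uniformly small, so the predictable quadratic variation of $\wdh{M}^h\cd$ over $[s,t]$ is controlled by a constant multiple of $\wdh{T}^h(t)-\wdh{T}^h(s)$; a standard martingale tightness criterion then applies. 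Since $\wdh{W}^h\cd$ is the integral in \eqref{e.4.16} of the bounded integrand $D_0\cd P'\cd$ against $\wdh{M}^h\cd$, it is tight as well.

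Finally, the representation \eqref{e.4.21}, rewritten as \eqref{e.4.22}, expresses $\wdh{X}^h\cd$ as a sum of the now-tight processes $\wdh{B}^h\cd, \wdh{M}^h\cd, \wdh{Y}^h\cd, \wdh{Z}^h\cd$ plus the negligible error $\wdh{\e}^h\cd$, which vanishes uniformly in probability by \eqref{e.4.23}; hence $\wdh{X}^h\cd$ is tight. Each marginal of $\wdh{H}^h\cd$ being tight, the vector-valued family is tight, and Prokhorov's theorem furnishes a weakly convergent subsequence with limit $\wdh{H}\cd = (\wdh{X}\cd, \wdh{W}\cd, \wdh{Y}\cd, \wdh{Z}\cd, \wdh{T}\cd)$, as claimed; the degenerate-diffusion case is absorbed by the perturbation trick of \cite[p.288--289]{Kushner92} already used for \eqref{e.4.17}.
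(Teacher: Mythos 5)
Your proposal is correct, and it is essentially the paper's approach: the paper itself omits the proof of this theorem, stating only that the details are the same as in \cite{Jin12, Ky16}, and what those references (and Kushner--Dupuis) carry out is exactly your componentwise argument --- the stretched clock $\wdh{T}^h$ and the rescaled controls are uniformly Lipschitz up to an $O(h)$ one-step error, the drift and martingale parts are controlled through the boundedness of $b,\sg$ on $[0,U]^d$ and local consistency, and tightness of the vector then follows with Prokhorov's theorem supplying the convergent subsequence. The only point worth being careful about in a fully written version is that you should conclude C-tightness (asymptotic equicontinuity) of each component rather than mere $D$-space tightness, since addition is not continuous in the Skorokhod topology; your Lipschitz-modulo-$O(h)$ bounds give exactly that, so the step where you sum the components in \eqref{e.4.21} is justified.
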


We proceed to characterize the limit process.

\begin{thm}\label{thm:thm4.4}  Suppose Assumptions \ref{a:1} and \ref{a:2} hold.
	Let $\wdh{\mathcal{F}}(t)$ be the $\sigma$-algebra generated by
	$$\{\wdh{X}(s), \wdh{W}(s), \wdh{Y}(s), \wdh{Z}(s), \wdh{T}(s):s \le t\}.$$
	Then the following assertions hold.
	\begin{itemize}
		\item[\rm (a)] $\wdh{W}(t)$ is an $\wdh{\mathcal{F}}(t)$-martingale with quadratic variation process $\wdh{T}(t)I_d$.
		\item[\rm (b)]
		$\wdh{Y}\cd$, $\wdh{Z}\cd$, and $\wdh{T}\cd$ are  nondecreasing and nonnegative.
		\item[\rm (c)] The limit processes satisfy
		\beq{e.5.1}
		\wdh{X}(t)=x +\int_0^t b(\wdh{X}(s))d\wdh{T}(s)+\int_0^t  \sg(\wdh{X}(s))d\wdh{W}(s)-\wdh{Y}(t)+\wdh{Z}(t), \quad t\ge 0.	 \eeq
	\end{itemize}
\end{thm}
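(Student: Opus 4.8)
The plan is to follow the standard weak-convergence route of the Markov chain approximation method, passing to the limit along the convergent subsequence furnished by Theorem~\ref{thm:thm}. By the Skorokhod representation theorem we may assume, after enlarging the probability space, that $\wdh{H}^h\to\wdh{H}$ almost surely in the Skorokhod topology, so that all the identifications below can be carried out pathwise and in expectation.

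Part (b) is immediate. Each of $\wdh{Y}^h\cd$, $\wdh{Z}^h\cd$, $\wdh{T}^h\cd$ is by construction a sum of nonnegative increments (the jumps $h\ei$ for $\wdh{Y}^h,\wdh{Z}^h$ and the nonnegative time steps $\Delta\wdh{t}^h_n$ for $\wdh{T}^h$), hence nonnegative and nondecreasing; since an a.s.\ limit of nonnegative nondecreasing functions inherits both properties, (b) follows.

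For part (a) I would invoke the martingale characterization of the limit. From \eqref{e.4.11} and \eqref{e.4.16}, together with the local consistency conditions \eqref{e.4.5a}, the process $\wdh{W}^h\cd$ is a square-integrable $\wdh{\mathcal F}^h\cd$-martingale whose quadratic variation equals $\wdh{T}^h(t)I_d$ up to an asymptotically negligible error. Concretely, for any bounded continuous $G$ and any sampling times $t_1<\dots<t_k\le t<t+s$ one has
\bed
\E\Big[G\big(\wdh{H}^h(t_j),\,j\le k\big)\big(\wdh{W}^h(t+s)-\wdh{W}^h(t)\big)\Big]=0,
\eed
together with the analogous bracket identity yielding quadratic variation $\wdh{T}^h(t)I_d$ up to $o(1)$. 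The crux is to interchange the limit with the expectation, which rests on uniform integrability of $\{\wdh{W}^h(t)\}_h$ and of its quadratic terms; these follow from the per-step bound $|\Delta X^h_n|=O(h)$ and the fact that $\wdh{T}^h$ advances only on diffusion steps. Letting $h\to0$ yields the same identities for $(\wdh{W},\wdh{T})$, which is precisely the assertion that $\wdh{W}\cd$ is an $\wdh{\mathcal F}\cd$-martingale with quadratic variation $\wdh{T}(t)I_d$.

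For part (c) I would pass to the limit in \eqref{e.4.22} term by term. The error $\wdh{\e}^h\cd\to0$ in $L^1$ by \eqref{e.4.23}; the drift term $\int_0^t b(\wdh{X}^h(s))\,d\wdh{T}^h(s)$ converges by continuity of $b\cd$ and the joint convergence $(\wdh{X}^h,\wdh{T}^h)\to(\wdh{X},\wdh{T})$; and $\wdh{Y}^h,\wdh{Z}^h\to\wdh{Y},\wdh{Z}$ by Theorem~\ref{thm:thm}. The remaining and most delicate term is the stochastic integral $\int_0^t\sg(\wdh{X}^h(s))\,d\wdh{W}^h(s)$, for which separate weak convergence of integrand and integrator does not suffice. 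I would identify its limit with $\int_0^t\sg(\wdh{X}(s))\,d\wdh{W}(s)$ by combining part (a) --- so that $\wdh{W}\cd$ is a genuine continuous martingale --- with the Kurtz--Protter theory of convergence of stochastic integrals, as carried out in \cite{Jin12, Ky16, Kushner91}, using that $\{\wdh{W}^h\}$ is a family of martingales with uniformly controlled brackets and that $\sg\cd$ is continuous. This yields \eqref{e.5.1}.

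The main obstacle is exactly this last step. Because the stretched-out time scale (introduced to render $\{\wdh{Y}^h,\wdh{Z}^h\}$ tight) forces $\wdh{W}$ to be a time-changed Brownian motion with clock $\wdh{T}$ rather than an ordinary one, one must verify that the singular jumps of $\wdh{Y}$ and $\wdh{Z}$ contribute nothing to the stochastic integral, consistent with the fact that $\wdh{T}$ does not advance at those times. Establishing the convergence of the stochastic integrals under this time change, together with the attendant uniform-integrability estimates, is where the real work lies.
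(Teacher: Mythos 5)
Your proposal is correct and follows exactly the route the paper itself relies on: the paper omits the proof of this theorem, stating it is ``essentially the same'' as in \cite{Jin12, Ky16}, and those references proceed by precisely your argument --- the Kushner--Dupuis weak-convergence method with Skorokhod representation, the martingale characterization of $\wdh{W}\cd$ with bracket $\wdh{T}(t)I_d$ via test functions of the pre-limit paths plus uniform integrability from the $O(h)$ per-step increments, and term-by-term passage to the limit in \eqref{e.4.22} with the stochastic integral identified last. You also correctly isolate the only delicate point, namely that $\wdh{W}\cd$ is a martingale with clock $\wdh{T}\cd$ rather than a Brownian motion and that the singular jumps of $\wdh{Y}\cd,\wdh{Z}\cd$ occur where $\wdh{T}\cd$ is flat, which is exactly what the time-rescaling construction is designed to guarantee.
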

For $t<\infty$, define the inverse $R(t)= \inf\{s: \wdh{T}(s)>t\}$. For any process $\wdh{\nu}\cd$, define the time-rescaled process $(\bar\nu(t))$ by $\bar \nu(t)= \wdh{\nu}(R(t))$. Let ${\mathcal{\bar F}}(t)$ be the $\sigma$-algebra generated by
	$\{{\bar X}(s), {\bar W}(s), {\bar Y}(s), {\bar Z}(s), {\bar R}(s): s\le t\}$. Let $V^h(x)$ and $V^U(x)$ be value the functions defined in \eqref{e.4.5} and \eqref{e:VU}, respectively.

\begin{thm}\label{thm:r}
	 Suppose Assumptions \ref{a:1} and \ref{a:2} hold.  The following assertions are true.
	\begin{itemize}
		\item[\rm(a)] $\bar R$ is right continuous, nondecreasing, and $\bar R(t)\to \infty$ as $t \to \infty$ with probability 1.
		\item[\rm(b)]  $\bar Y$ and $\bar Z$ are right-continuous, nondecreasing, nonnegative, and $\mathcal{\bar F}(t)$-adapted processes.
		\item[\rm(c)]
		$\bar W\cd$ is a standard $\mathcal{\bar F}(t)$ adapted $d$ dimensional Brownian motion,
		and \beq{e.5.2}
		{\bar X}(t)=x +\int_0^t b(\bar X(s))ds+\int_0^t  \sg(\bar X(s))d\bar W(s)-\bar Y(t)+\bar Z(t), \quad t\ge 0.
		\eeq
		\item[\rm(d)] For any $x\in [0, U]^d$, $V^h(x)\to V^U(x)$ as $h\to 0$.
	\end{itemize}
\end{thm}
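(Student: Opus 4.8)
The plan is to obtain Theorem~\ref{thm:r} from the stretched-time results in Theorem~\ref{thm:thm} and Theorem~\ref{thm:thm4.4} by inverting the random time change. By Theorem~\ref{thm:thm4.4} the limit $\wdh{T}\cd$ is continuous, nondecreasing and nonnegative, so its inverse $R(t)=\inf\{s:\wdh{T}(s)>t\}$ is automatically right continuous and nondecreasing, and $\wdh{T}(R(t))=t$ for every $t$ in the range of $\wdh T$. This immediately gives the regularity claimed in part (a); the only substantive point is that $\bar R(t)\to\infty$ as $t\to\infty$, which is equivalent to $\wdh{T}(s)\to\infty$ as $s\to\infty$. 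First I would establish this by recalling from Definition~\ref{def:1} that $\wdh{T}\cd$ increases at unit rate exactly on the diffusion intervals, so $\wdh{T}(s)=s-(\text{total stretched control time up to } s)$, and the stretched control time is controlled by $\wdh{Y}(s)+\wdh{Z}(s)$. If $\wdh T(\infty)$ were finite, infinitely much control would be exerted in a bounded real-time horizon; since seeding costs strictly exceed harvesting yields by Assumption~\ref{a:1}(b) and $\wdh X$ stays in $[0,U]^d$, such a control has reward tending to $-\infty$ and cannot arise as a limit of near-optimal chain controls. This is the standard Kushner--Dupuis argument and forces $\wdh{T}(\infty)=\infty$ almost surely.

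For part (b), the processes $\bar Y(t)=\wdh{Y}(R(t))$ and $\bar Z(t)=\wdh{Z}(R(t))$ are compositions of the nondecreasing, nonnegative, right-continuous processes $\wdh{Y}\cd,\wdh{Z}\cd$ from Theorem~\ref{thm:thm4.4}(b) with the nondecreasing right-continuous map $R$, so they inherit all three properties, and their $\mathcal{\bar F}(t)$-adaptedness follows because $R(t)$ is an $\wdh{\mathcal F}(t)$-stopping time for each fixed $t$. For part (c), I would use that by Theorem~\ref{thm:thm4.4}(a) the process $\wdh{W}\cd$ is an $\wdh{\mathcal F}(t)$-martingale with quadratic variation $\wdh{T}(t)I_d$. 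Time-changing by $R$, the process $\bar W(t)=\wdh{W}(R(t))$ is an $\mathcal{\bar F}(t)$-martingale with quadratic variation $\wdh{T}(R(t))I_d=tI_d$, so L\'evy's characterization identifies $\bar W\cd$ as a standard $d$-dimensional Brownian motion. Applying the same substitution $s\mapsto R(s)$ to the integral equation \eqref{e.5.1} turns $\int_0^{t}b(\wdh X(u))\,d\wdh T(u)$ into $\int_0^{t}b(\bar X(u))\,du$, since $d\wdh T(R(u))=du$, and turns the stochastic integral against $d\wdh W$ into one against $d\bar W$; combined with $\bar Y,\bar Z$ this yields \eqref{e.5.2}.

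The heart of the argument is part (d), the convergence $V^h(x)\to V^U(x)$, which I would prove by a two-sided bound, using the subsequence principle to upgrade subsequential limits to a genuine limit. For $\limsup_{h\to0}V^h(x)\le V^U(x)$, I would take near-optimal chain controls $\pi^h$ with $J^h(x,\pi^h)\ge V^h(x)-\varepsilon$, pass to the weakly convergent subsequence furnished by Theorem~\ref{thm:thm}, and use parts (a)--(c) to see that the limit $(\bar X,\bar Y,\bar Z,\bar W)$ solves \eqref{e.5.2} with $(\bar Y,\bar Z)$ admissible and $\bar X$ confined to $[0,U]^d$. The rescaling was designed precisely so that the discount $e^{-\delta t^h_m}$ becomes $e^{-\delta\wdh T(\,\cdot\,)}$ and, after inverting, $e^{-\delta t}$, so that $J^h(x,\pi^h)\to J(x,\bar Y,\bar Z)\le V^U(x)$. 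For the reverse inequality $\liminf_{h\to0}V^h(x)\ge V^U(x)$, I would fix a near-optimal admissible $(Y,Z)\in\mathcal{A}^U_x$ for the continuous problem and construct chain controls $\pi^h$ whose interpolations track $(Y,Z)$, so that $J^h(x,\pi^h)\to J(x,Y,Z)\ge V^U(x)-\varepsilon$; this is the standard approximation of a singular continuous control by the discrete harvesting and seeding steps.

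The main obstacle, as in \cite{Jin12, Ky16}, is controlling the reward functional across the weak limit: one must verify that the Stieltjes integrals $\int e^{-\delta s}f(X^h(s-))\cdot dY^h(s)$ and the analogous seeding term converge, which is delicate at the jump times of $\wdh Y,\wdh Z$ because the prices $f,g$ are evaluated at the left limit $X(s-)$. Here the continuity and monotonicity of $f,g$ from Assumption~\ref{a:1}(b) together with the Skorokhod representation let me pass to the limit while preserving the correct one-sided evaluation. A secondary difficulty is ensuring the limiting control is non-anticipative and keeps $\bar X$ in $[0,U]^d$; this is where the time-change bookkeeping of part (a), together with the boundary rule enforced in the admissibility conditions (c)--(d) for $\mathcal{A}^h_x$, is essential.
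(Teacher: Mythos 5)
Your proposal is correct and takes essentially the same route as the paper: the paper in fact omits the proof of Theorem~\ref{thm:r} entirely, stating only that ``the proofs \ldots are essentially the same as those in \cite{Jin12, Ky16},'' and those references use precisely the Kushner--Dupuis argument you outline --- inverting the stretched-out time scale to get (a)--(c), L\'evy's characterization for the time-changed martingale, and the two-sided weak-convergence bound (with near-optimal chain controls for one inequality and discretization of a near-optimal continuous control for the other) for (d). Your sketch, including the use of $g_i>f_i$ to rule out unbounded control in finite time, actually supplies more detail than the paper itself does.
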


\end{document}